\newtheorem{theorem}{Theorem}[section]
\newtheorem{corollary}[theorem]{Corollary}
\newtheorem{lemma}[theorem]{Lemma}
\newtheorem{prop}[theorem]{Proposition}
\newtheorem{example}[theorem]{Example}
\newtheorem{definition}[theorem]{Definition}
\newtheorem{rmk}[theorem]{Remark}
\DeclareMathOperator{\rk}{rk}
\DeclareMathOperator{\Fund}{Fund}
\DeclareMathOperator{\Span}{Span}
\DeclareMathOperator{\codim}{codim}
\DeclareMathOperator{\ad}{ad}
\DeclareMathOperator{\Ad}{Ad}
\DeclareMathOperator{\Gr}{Gr}
\DeclareMathOperator{\fg}{\mathfrak g}
\DeclareMathOperator{\fgd}{\mathfrak g^{\ast}}
\DeclareMathOperator{\fd}{\mathfrak d}
\DeclareMathOperator{\fh}{\mathfrak h}
\DeclareMathOperator{\fl}{\mathfrak l}
\DeclareMathOperator{\cl}{\mathcal L}
\DeclareMathOperator{\C}{\mathbb C}
\DeclareMathOperator{\Z}{\mathbb Z}
\title{\bf {Wonderful Compactification of a Cartan Subalgebra of a Semisimple Lie Algebra}}
\author{Sam Evens and Yu Li}
\date{\vspace{-5ex}}
\begin{document}

\maketitle


\begin{abstract}
Let $\fh$ be a Cartan subalgebra of a complex semisimple Lie algebra $\fg.$   We define a compactification $\bar \fh$ of $\fh$, which is analogous to the closure $\bar H$ of the corresponding maximal torus $H$ in the adjoint group of $\fg$ in its wonderful compactification, which was introduced and studied by De Concini and Procesi \cite{DCP}.  We observe that $\bar \fh$ is a matroid Schubert variety and prove that the irreducible components of the boundary $\bar \fh - \fh$ of $\fh$ are divisors indexed by root system data.  We prove that $\bar \fh$ is a normal variety and find an affine paving of $\bar \fh,$ where the strata are given by the orbits of $\fh.$   We show that the strata of $\bar \fh$ correspond bijectively to subspaces of the corresponding Coxeter hyperplane arrangement studied by Orlik and Solomon, and prove that the associated posets are isomorphic.   As a consequence, we express the Betti numbers of $\bar \fh$ in terms of well-known combinatorial invariants in the classical cases.   We show that the Weyl group $W$ acts on $\bar \fh$, and describe $H^{\bullet}(\bar \fh, \C)$ as a representation of $W$, and compute the cup product for $H^{\bullet}(\bar \fh, \Z)$.   
\end{abstract}

\tableofcontents

\medskip

\section{Introduction}

The closure $\bar H$ of a maximal torus $H$ of a semisimple complex group $G$ of adjoint type in its {\it wonderful compactification} $X$ was studied by De Concini and Procesi \cite{DCP}.   It is a smooth toric variety with fan given by the Weyl chamber decomposition, and this property is closely related to the smoothness of the wonderful compactification.   In a series of papers \cite{EvLu, EvLu2}, the first author and Lu embedded $X$ inside the {\it variety of Lagrangian subalgebras} of $\fg \oplus \fg$, where $\fg$ is the Lie algebra of $G$ and used the smoothness of $X$ to prove that all irreducible components of the variety of Lagrangian subalgebras are smooth.   Motivations from Poisson geometry suggest that one should also study the variety $\cl$ of Lagrangian subalgebras in a degeneration of $\fg \oplus \fg$ given by the semidirect sum of $\fg$ with its coadjoint representation $\fgd.$  It is elementary to observe that the variety $\cl$ contains a subvariety $\bar {\fg}^{\ast}$ given by the closure of $\fgd.$   While we are currently unable to understand the geometry of $\bar {\fg}^{\ast}$, in this paper we obtain a rather complete understanding of the closure $\bar \fh$ of a Cartan subalgebra $\fh$ of $\fg$ embedded into $\cl.$  We view $\bar \fh$ as the natural additive analogue of $\bar H$, and its study leads in a natural way to the theory of \textit{additive toric varieties} \cite{Cro}.  In support of this perspective, in this paper we prove that $\bar \fh$ is a normal variety with finitely many $\fh$-orbits, which gives an affine paving of $\bar \fh$, and determine a number of invariants of the topology of $\bar \fh.$

In more detail, following ideas from \cite{EvLu, EvLu2}, consider the Lie algebra
$\fd := \fg \ltimes \fgd$, where $\fg$ acts on its dual space $\fgd$ by the coadjoint action, and the factor $\fgd$ is regarded as an abelian Lie algebra.  The Lie algebra $\fd$ comes equipped with a symmetric nondegenerate invariant bilinear form $(~,~)$ defined by $$((x, \phi), (y, \psi)) := \phi(y) + \psi(x),$$ for all $x, y \in \fg$ and $\phi, \psi \in \fgd.$  We call a Lie subalgebra $\fl$ of $\fd$ Lagrangian if $\fl$ is a maximal isotropic subspace of $\fd$ relative to the above symmetric form, and a triple $(\fd, \fl_1, \fl_2)$ is called a {\it Manin triple} if $\fl_1$ and $\fl_2$ are transversal Lagrangian subalgebras of $\fd.$  Recall that Manin triples as above are closely related to {\it Poisson Lie group} structures on a connected group $L_1$ with Lie algebra $\fl_1.$   The set $\cl$ of Lagrangian subalgebras of $\fd$ has the structure of a (reduced) closed subvariety of the Grassmannian $\Gr(\dim \fg, \fd)$  of vector subspaces of $\fd$ with the same dimension as $\fg.$  The variety $\cl$ is referred to as the variety of Lagrangian subalgebras of $\fd.$  The paper \cite{EvLu2} gives a detailed understanding of the variety $\cl(\fg \oplus \fg)$ of Lagrangian subalgebras of $\fg \oplus \fg$, which is a natural setting to study  families of Poisson homogeneous spaces for the \textit{standard} Poisson Lie group structure on $G.$   The variety $\cl$ is in a natural sense a degeneration of $\cl(\fg\oplus\fg)$, but the geometry of $\cl$ seems to be much more difficult to understand.    In a recent paper \cite{EL}, the authors found an unexpected connection between $\cl$ and the abelian ideals of a Borel subalgebra of $\fg$, and it would be of interest to relate the results of \cite{EL} to the present paper.

 We may embed $\fgd$ into $\cl$ via the embedding $$\phi \longmapsto \fl_{\phi}:=\{(x, - \ad_x^*(\phi)): x \in \fg \},$$ and we refer to the closure $\bar {\fg}^{\ast}$ of $\fgd$ in $\cl$ as the wonderful compactification of $\fgd.$  Embed $\fh$ into $\fgd$ via the Killing form.  Then the composition $$\fh \lhook\joinrel\longrightarrow \fgd \lhook\joinrel\longrightarrow \cl$$ embeds $\fh$ into $\cl.$  The closure $\bar \fh$ of the image of this embedding in $\cl$ is called the wonderful compactification of $\fh$ and is the main object of study of this paper.  As a consequence of our construction, we will see (Proposition \ref{ambient}) that $\bar \fh$ is a special case of the matroid Schubert varieties studied in \cite{AB} and \cite{BHMPWI}.

Write $\Phi$ for the root system for $(\fg, \fh).$  Let $\mathcal S$ be the set of \textit{closed} root subsystems of $\Phi$ of rank $\rk \Phi - 1.$  We view $\mathcal S$ as a poset, where the partial order is inclusion.  A maximal element of the poset $(\mathcal S, \subseteq)$ is called a \textit{good} root subsystem of $\Phi.$  For a good root subsystem $\Phi'$ of $\Phi$, we let $\fg'$ be a semisimple Lie subalgebra of $\fg$ and $\fh'$ a Cartan subalgebra of $\fg'$ such that the root system for $(\fg', \fh')$ is isomorphic to $\Phi'.$  We prove the following result about the structure of $\bar \fh.$

\begin{theorem}[see Theorem \ref{m1'}] \label{m1} 
$\ $
\begin{enumerate}
    \itemsep 0em
    \item There is a bijection
\begin{align*}
C: \{\text{good root subsystems of} ~ \Phi\} \longrightarrow \{\text{irreducible components of} ~ \bar \fh - \fh\}, ~ \Phi' \longmapsto C(\Phi').
\end{align*}
In particular, the boundary $\bar \fh - \fh$ of $\fh$ has as many irreducible components as there are good root subsystems of $\Phi.$

\item For a good root subsystem $\Phi'$ of $\Phi$, the irreducible component $C(\Phi')$ is isomorphic as a variety to $\bar \fh'$, the wonderful compactification of $\fh'.$  In particular, we have $$\dim C(\Phi') = \dim \fh - 1,$$ so that the boundary $\bar \fh - \fh$ of $\fh$ is of pure dimension $\dim \fh -1.$
\end{enumerate}
\end{theorem}


To analyze the singularities of $\bar \fh$, we prove the following result.

\begin{theorem}[see Theorem \ref{m3'}] \label{m3} 
The set $$\fh \cup (\bigcup \limits_{\substack {\Phi' ~ \text{is a good} \\ \text{root subsystem}}} \fh')$$ consists of regular points of $\bar \fh$, where $\fh'$ is defined as above.  In particular, the variety $\bar \fh$ is regular in codimension one.
\end{theorem}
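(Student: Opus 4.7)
The plan is to treat points of $\fh$ and of the various $\fh'$ separately, then derive regularity in codimension one at the end. For $x \in \fh$, the embedding $\fh \hookrightarrow \bar\fh$ is by construction an open immersion onto a dense open subset, and $\fh$ is affine space, so $x$ is automatically a smooth point of $\bar\fh$. The real content is at points of $\fh'$ associated to good root subsystems $\Phi'$, and my plan here is to exhibit an explicit smooth local model.

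Fix $\Phi'$ good and $x \in \fh'$. By Theorem~\ref{m1}(2), the boundary divisor $C(\Phi')$ is isomorphic to $\bar\fh'$, and under this isomorphism the inclusion $\fh' \hookrightarrow C(\Phi')$ corresponds to the open dense embedding of $\fh'$ into its own wonderful compactification; in particular $x$ is a smooth point of $C(\Phi')$. To promote this to smoothness of $\bar\fh$ at $x$, I would exhibit an open neighborhood of $x$ isomorphic to an open subset of $C(\Phi') \times \mathbb{A}^1$. Since $\dim\fh' = \dim\fh - 1$, pick $v \in \fh$ spanning the orthogonal complement of $\fh'$ under the Killing form, so $\fh = \fh' \oplus \C v$, and define
\[
\varphi \colon \fh' \times \mathbb{A}^1 \longrightarrow \bar\fh, \qquad \varphi(h,s) = \fl_{h + s^{-1}v} \ \text{for}\ s \in \C^{\times},
\]
extending across $s = 0$ by sending $(h,0)$ to the point of $C(\Phi')$ corresponding to $h \in \fh'$ under Theorem~\ref{m1}(2). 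To make this a morphism, I would realize $\fl_{h + s^{-1}v}$ in Plücker coordinates on $\Gr(\dim\fg, \fd)$, rescale so that the dominant coordinates vanish to consistent orders in $s$, and identify the resulting limit at $s = 0$ with the prescribed embedding of $\fh'$ into $C(\Phi')$.

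Given such a $\varphi$, its image contains the open subset $\{h + s^{-1}v : h \in \fh', s \in \C^{\times}\} \subset \fh$, so $\varphi$ is dominant; as source and target are irreducible of dimension $\dim\fh$, it is birational. Showing $\varphi$ is an open immersion near $(x,0)$ then reduces to two things: (a) local injectivity, which follows from the disjointness of $\fh'$ from any other boundary component $C(\Phi'')$ with $\Phi'' \neq \Phi'$, a property expected from the bijection in Theorem~\ref{m1}; and (b) that $d\varphi_{(x,0)}$ is an isomorphism onto the Zariski tangent space $T_x \bar\fh$. Point (b) is the principal obstacle: it requires a direct comparison of $T_{\varphi(h,s)} \bar\fh \subset T_{\varphi(h,s)} \Gr(\dim\fg, \fd)$ as $s \to 0$, and in particular a concrete description of the Lagrangian subalgebras parametrized by the image of $\fh'$ in $C(\Phi')$, which should be extractable from the proof of Theorem~\ref{m1}. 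Once pointwise smoothness on $\fh \cup \bigcup_{\Phi'} \fh'$ is established, the singular locus of $\bar\fh$ is contained in the complement; by Theorem~\ref{m1}(2) each codimension-one component of $\bar\fh \setminus \fh$ contains $\fh'$ as a dense open subset, so this complement has codimension at least two in $\bar\fh$, yielding regularity in codimension one.
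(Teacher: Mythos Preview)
Your proposal identifies the right target but leaves the decisive step unproved, and the route you suggest for filling it will not work. The entire content of the theorem is your step~(b): bounding $\dim T_x\bar\fh$ from above by $r$. Your map $\varphi$ is injective with $r$-dimensional source, so $d\varphi_{(x,0)}$ certainly lands \emph{inside} $T_x\bar\fh$; the issue is whether $T_x\bar\fh$ is any larger, and a birational bijection from a smooth source onto a possibly singular target says nothing about that (think of the normalization of a cusp). Your suggested resolution---tracking $T_{\varphi(h,s)}\bar\fh$ as $s\to 0$---goes the wrong way: the dimension of the Zariski tangent space is \emph{upper} semicontinuous, so the limit of the generic $r$-dimensional tangent spaces gives only a lower bound on $\dim T_x\bar\fh$, never the needed upper bound. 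To show $\dim T_x\bar\fh\le r$ you must produce enough equations whose differentials at $x$ are independent, and there is no shortcut through the chart $\varphi$.

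The paper does exactly this, but in the concrete model $\bar\fh\subset(\mathbb P^1)^d$ of Proposition~\ref{ambient} rather than in $\Gr(\dim\fg,\fd)$. Fixing a good $\Phi'$ and $x\in\fh'$, one works in the affine chart with coordinates $z_\lambda$ equal to $x_\lambda$ for $\lambda\in(\Phi')^+$ and to $1/x_\lambda$ for $\lambda\notin\Phi'$. Choosing a basis $\lambda_1,\dots,\lambda_{r-1}$ of $\Span_{\mathbb Q}\Phi'$ and one $\lambda_0\in\Phi^+\setminus\Phi'$, two families of polynomials in the ideal of $\bar\fh$ are written down: type~(i), the linear relations $z_\lambda-\sum a_i z_{\lambda_i}$ for $\lambda\in(\Phi')^+$; and type~(ii), for each $\lambda\in\Phi^+\setminus(\Phi'\cup\{\lambda_0\})$, the relation $z_{\lambda_0}-d_0 z_\lambda - z_{\lambda_0}z_\lambda\sum d_i z_{\lambda_i}$ obtained by multi-homogenizing $\lambda=d_0\lambda_0+\sum d_i\lambda_i$ (here $d_0\neq 0$ because $\Phi'$ is good). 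These are $d-r$ polynomials whose Jacobian at $x$ is visibly of full rank: type~(i) differentials are supported on the $(\Phi')^+$-rows with distinct pivot positions, and type~(ii) differentials, after evaluating $z_{\lambda_0}=z_\lambda=0$, have exactly two nonzero entries in the rows $\lambda_0$ and $\lambda$. This is precisely the computation your step~(b) requires, and working in $(\mathbb P^1)^d$ rather than the Grassmannian is what makes it tractable. The codimension-one statement then follows as you say, from Theorem~\ref{m1}.
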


Using results in \cite{AB}, one deduces that the variety $\bar \fh$ is Cohen-Macaulay.  Hence, it satisfies Serre's condition (S2).  In view of Theorem \ref{m3} and Serre's criterion for normality, we have

\begin{theorem}[see Theorem \ref{normal}] \label{m4} 
The variety $\bar \fh$ is normal.
\end{theorem}


We further consider $k$-step good root subsystems of $\Phi$, which are defined inductively as good root subsystems of $(k-1)$-step good root subsystems of $\Phi$, where $\Phi$ is regarded as a $0$-step good root subsystem of $\Phi.$
We then apply the above results to find an affine paving 
$$\bar \fh = \bigsqcup \mathring C(\Psi),$$ where the union is over all $k$-step good root subsystems $\Psi$ of $\Phi$ for $k=0, \dots, r := \dim \fh$ in Corollary \ref{strat}.  View $\fh$ as an additive algebraic group.  We prove that the action of $\fh$ on itself by addition extends to an action of $\fh$ on $\bar \fh$.  Further, the subvarieties $\mathring C(\Psi)$ are the $\fh$-orbits of $\bar \fh.$  In particular, there are finitely many $\fh$-orbits in $\bar \fh$, and $\mathring C(\Phi)$ is the open $\fh$-orbit which is $\fh$-equivariantly isomorphic to $\fh$.
Due to the above results, we may regard $\bar \fh$ as an additive analogue of a toric variety.

As a consequence of the above affine paving, much of the structure of $H^{\bullet}(\bar \fh, \Z)$ is determined by the combinatorics of $k$-step good root subsystems.  To study this combinatorics, we relate the order relation given by closures of strata in $\bar \fh$ to the \textit{Coxeter arrangements} studied by Orlik and Solomon.  More precisely, we give an isomorphism of posets between the poset consisting of strata of $\bar \fh$ with order relation given by closure, and the poset of subspaces of a Coxeter arrangement.  As a consequence, the Betti numbers of $\bar \fh$ can be deduced from the number of subspaces of given rank in the corresponding Coxeter arrangement, and these are computed in \cite{OT} and \cite{OS1}.  In more detail, for a root system $\Phi$, we let $f(\Phi,k) := \rk H^{2(r-k)}(\bar \fh, \Z)$, which is the number of $k$-step good root subsystems.  We let $S(n,k)$ denote the {\it Stirling number of the second kind}, which counts the number of partitions of the set $\{ 1, \dots, n \}$ into $k$ nonempty parts.

\begin{theorem}[see Theorem \ref{thm:Betti}] \label{m6}
 For $r \in \mathbb N$ and $0 \le k \le r$,  the Betti number  $$f(A_r, k) = S(r+1, k+1).$$
\end{theorem}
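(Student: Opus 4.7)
The plan is to identify $k$-step good root subsystems of $A_r$ with set partitions of $[r+1] := \{1, \dots, r+1\}$ into $k+1$ nonempty blocks, so that the count reduces directly to the definition of $S(r+1,k+1)$.

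First I would realize $A_r$ as $\{e_i - e_j : 1 \le i \ne j \le r+1\}$ and observe that closed root subsystems of $A_r$ correspond bijectively to set partitions of $[r+1]$: a partition $\pi = \{B_1, \dots, B_m\}$ gives the subsystem $\Phi_\pi = \bigsqcup_s \{e_i - e_j : i, j \in B_s, i \ne j\}$, isomorphic to $A_{|B_1|-1} \times \cdots \times A_{|B_m|-1}$. Conversely, closedness forces the relation $i \sim j \iff (e_i - e_j \in \Phi)$ or $i = j$ to be an equivalence relation. In particular $\rk \Phi_\pi = (r+1) - m$, so rank-$(r-1)$ closed subsystems correspond precisely to partitions into two blocks, and all such are good since adjoining any outside root merges the two blocks and recovers all of $A_r$.

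Next I would show by induction on $k$ that the $k$-step good root subsystems of $A_r$ are exactly the subsystems $\Phi_\pi$ for $\pi$ a partition of $[r+1]$ into $k+1$ blocks. For the inductive step, suppose $\Psi = \Phi_\pi$ is a $(k-1)$-step good root subsystem with $\pi = \{B_1, \dots, B_k\}$. A closed subsystem of a product $A_{n_1-1} \times \cdots \times A_{n_k-1}$ is itself a product of closed subsystems of the factors (orthogonal simple components do not interact), so a good root subsystem of $\Psi$ drops the rank in exactly one factor $A_{|B_s|-1}$ to a good subsystem, which by the base case splits $B_s$ into two nonempty pieces. This gives a refinement of $\pi$ into a partition with $k+1$ blocks, and every such refinement arises this way. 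Different refinement chains may terminate at the same final partition, but since a $k$-step good root subsystem is defined as a subsystem (not a chain), the count is exactly the number of partitions of $[r+1]$ into $k+1$ blocks, namely $S(r+1, k+1)$.

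Finally, the identification $f(A_r, k) = S(r+1, k+1)$ follows by combining this count with the affine paving of Corollary \ref{strat}: the cell $\mathring C(\Psi)$ attached to a $k$-step good root subsystem $\Psi$ has complex dimension $\rk \Psi = r - k$, so $\rk H^{2(r-k)}(\bar \fh, \Z)$ equals the number of such subsystems. I expect the main technical point to be the verification that every good root subsystem of a product $A_{n_1-1} \times \cdots \times A_{n_k-1}$ arises from splitting a single block; this hinges on the decomposition of closed subsystems along orthogonal direct summands, after which the rest is combinatorial bookkeeping.
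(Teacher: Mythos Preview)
Your proof is correct and more direct than the paper's. The paper does not argue combinatorially in type $A$ at all: it first establishes a rank-preserving bijection between the stratification $\mathcal C$ and the intersection lattice $L(\mathcal A)$ of the Coxeter arrangement (Proposition~\ref{cox}), deduces $f(\Phi,k)=W_k(L(\mathcal A))$ (Corollary~\ref{bettiwhitney}), and then simply cites Orlik--Terao \cite{OT} for the Whitney numbers of the type~$A$ arrangement. Your argument bypasses this detour by identifying closed root subsystems of $A_r$ with set partitions of $[r+1]$ via the equivalence relation $i\sim j \Leftrightarrow e_i-e_j\in\Psi$, and then reading off the rank as $(r+1)$ minus the number of blocks. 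The paper alludes to a related direct approach in Section~\ref{sect:paraClassical} (Theorem~\ref{codimkbij}, whose proof was removed), but there the bijection is with certain subsets of $\Phi^+$ satisfying condition~(\ref{condition}) rather than with partitions, so even that is packaged differently. Your route is more elementary and self-contained for type $A$; the paper's route via $L(\mathcal A)$ has the advantage of being uniform across all types and of connecting the Betti numbers to an established combinatorial invariant. One small remark: your inductive step relies on good subsystems of a product $\prod A_{n_s-1}$ arising by dropping rank in a single factor; the paper states this fact at the start of Appendix~\ref{classification} and calls it clear, so you are on solid ground, but you could also avoid the induction entirely by invoking Lemma~\ref{k-step} (maximal closed rank-$(r-k)$ subsystems are exactly the $k$-step good ones) and observing that in type $A$ \emph{every} closed subsystem of a given rank is already maximal, since containment of $\Phi_\pi$'s corresponds to coarsening of partitions.
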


In Theorem \ref{thm:Betti}, we give analogous formulas for types $B$, $C$, and $D$ (note that the bijection with Coxeter arrangements implies that the answer is the same for types $B$ and $C$).  We also give formulas for the Betti numbers of $\bar \fh$ in the exceptional cases,  which can be deduced from tables in \cite{OS1}, but we also computed them using the software SageMath.

We further show that the above isomorphism of posets is compatible with natural Weyl group actions.   We use these results to determine the Weyl group representation on $H^{\bullet}(\bar \fh, \C)$ in Corollary 
\ref{rep}.  Finally, we determine the cup product structure of $H^{\bullet}(\bar \fh, \Z)$ in terms of a natural basis given by the stratification, which we denote by $\xi_X$, where $X$ is a subspace in the intersection lattice $L(\mathcal A)$ of the Coxeter arrangement corresponding to a stratum.  Our result is the same as in \cite{HW}, but our method is much more elementary.

\begin{theorem}[see Theorem \ref{cupprod}] \label{m7} 
    For $X, Y \in L(\mathcal A)$, the cup product structure of $H^{\bullet}(\bar \fh, \Z)$ is given by
    \begin{align*}
        \xi_X \smile \xi_Y =
        \begin{cases}
            \xi_{X \cap Y}  & ~ \text{if} ~ X ~ \text{is transversal to} ~ Y \\
            0 & ~ \text{else}.
        \end{cases}
    \end{align*}
In particular, $H^{\bullet}(\bar \fh, \Z)$ is generated by its degree $2$ component.
\end{theorem}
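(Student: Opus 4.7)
The plan is to compute $\xi_X \smile \xi_Y$ geometrically, using the affine paving of $\bar \fh$ from Corollary \ref{strat} to identify each $\xi_X$ with the class Poincar\'e dual to $[C(X)]$. Since $\bar \fh$ admits an affine paving, $H^{\bullet}(\bar \fh, \Z)$ is free abelian with basis $\{\xi_X\}_{X \in L(\mathcal A)}$, concentrated in even degrees, so the cup product is commutative. As a preliminary, I would check that $C(X) \cap C(Y) = C(X \cap Y)$ as closed subsets of $\bar \fh$: under the poset isomorphism between $\bar \fh$-strata and $L(\mathcal A)$, a stratum $\mathring C(Z)$ lies in $C(W)$ iff $Z \subseteq W$ as subspaces, so $\mathring C(Z) \subseteq C(X) \cap C(Y)$ iff $Z \subseteq X \cap Y$, and taking the disjoint union of such strata yields $C(X \cap Y)$.

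For the transversal case $X + Y = \fh$, we have $\dim(X \cap Y) = \dim X + \dim Y - r$, making $C(X) \cap C(Y) = C(X \cap Y)$ a proper intersection of the expected dimension. Choosing a generic point $p$ in $X \cap Y$ lying inside the smooth open stratum $\fh \subseteq \bar \fh$, the tangent spaces satisfy $T_p C(X) = X$ and $T_p C(Y) = Y$, and hence $T_p C(X) + T_p C(Y) = \fh = T_p \bar \fh$; thus $C(X)$ and $C(Y)$ meet transversally along a Zariski open subset of $C(X \cap Y)$. Combined with regularity in codimension one (Theorem \ref{m3}), standard intersection theory yields $[C(X)] \cdot [C(Y)] = [C(X \cap Y)]$, hence $\xi_X \smile \xi_Y = \xi_{X \cap Y}$.

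For the non-transversal case $X + Y \ne \fh$, the intersection $X^{\perp} \cap Y^{\perp}$ is nonzero in $\fh^{*}$. Since $X^{\perp}$ and $Y^{\perp}$ are each spanned by the roots they contain, and the intersection of matroid flats is again root-spanned, there is a root $\gamma \in \Phi$ with $H_\gamma$ containing both $X$ and $Y$. Extending $\gamma$ to root bases of $X^{\perp}$ and $Y^{\perp}$ and iterating the transversal case gives factorizations $\xi_X = \xi_{H_\gamma} \smile \xi_{X'}$ and $\xi_Y = \xi_{H_\gamma} \smile \xi_{Y'}$, where $X', Y' \in L(\mathcal A)$ are transversal to $H_\gamma$. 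Consequently $\xi_X \smile \xi_Y$ carries $\xi_{H_\gamma}^{2}$ as a factor, and the vanishing reduces to showing $\xi_{H_\gamma}^{2} = 0$. For this I would invoke the matroid Schubert embedding $\bar \fh \hookrightarrow (\mathbb P^1)^{|\Phi^+|}$ via $h \mapsto (\alpha(h))_{\alpha \in \Phi^+}$; under this embedding $C(H_\gamma)$ is the scheme-theoretic intersection of $\bar \fh$ with the ambient divisor $\{x_\gamma = \infty\}$, so $\xi_{H_\gamma}$ is the restriction of a class $t_\gamma \in H^{2}((\mathbb P^1)^{|\Phi^+|}, \Z)$ satisfying $t_\gamma^{2} = 0$, and the vanishing follows by functoriality of cup product.

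The final assertion that $H^{\bullet}(\bar \fh, \Z)$ is generated in degree two then follows immediately, as the iterated transversal case expresses every $\xi_X$ as a product of the divisor classes $\xi_{H_\alpha}$. The main obstacle is the identification $\xi_{H_\gamma} = t_\gamma|_{\bar \fh}$: one must verify that the closure $C(H_\gamma)$ coincides with the scheme-theoretic intersection of $\bar \fh$ with $\{x_\gamma = \infty\}$ under the product-of-lines embedding, which rests on the explicit description of $\bar \fh$ as a matroid Schubert variety in \cite{AB} and a careful analysis of the $\fh$-orbit structure at infinity.
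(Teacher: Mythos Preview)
Your proposal has two genuine gaps.

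First, the transversal case rests on Poincar\'e duality and an intersection product on $\bar\fh$ itself, but $\bar\fh$ is singular (already for $\mathfrak{sl}_3$; see Example~\ref{ex:A2}), so no such duality or product is available. Your description of the cycle $C(X)$ is also inconsistent with the definitions in the paper: $\xi_X$ is the Kronecker dual of the fundamental class of the stratum closure $C(\Psi_X)$, which for $X\ne\{0\}$ lies entirely in the boundary $\bar\fh-\fh$ and never meets the open cell $\fh$; yet your tangent-space computation ``$T_pC(X)=X$ at a generic $p\in X\cap Y\subseteq\fh$'' treats $C(X)$ as the closure in $\bar\fh$ of the linear subspace $X\subseteq\fh$. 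These are different cycles, of complementary dimensions, and any identification between $\xi_X$ and a class supported on the latter would itself require the duality you are assuming.

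Second, in the non-transversal case your claim that $X^\perp\cap Y^\perp$ contains a root is false. In type $A_3$ take $X=\alpha_1^\perp\cap\alpha_3^\perp$ and $Y=(\alpha_1+\alpha_2)^\perp\cap(\alpha_2+\alpha_3)^\perp$; both are lines in $\fh$ belonging to $L(\mathcal A)$, their sum $X+Y$ is $2$-dimensional so the pair is non-transversal, yet $X^\perp\cap Y^\perp=\Span(\alpha_1-\alpha_3)$ contains no root. The matroid-theoretic intersection of the corresponding flats is the set of roots lying in $X^\perp\cap Y^\perp$, which here is empty even though the linear-algebraic intersection is nonzero; ``intersection of flats is a flat'' does not give what you need.

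The paper sidesteps both issues by working entirely through the embedding $\iota\colon\bar\fh\hookrightarrow(\mathbb P^1)^d$. The key input (Lemma~\ref{lift}) is that for any $\eta=\{\lambda_1,\dots,\lambda_k\}\subseteq\Phi^+$ one has $\iota^*(u_{\lambda_1}\cdots u_{\lambda_k})=\xi_X$ when $\eta$ is a basis of $X^\perp$ consisting of roots, and $\iota^*(u_{\lambda_1}\cdots u_{\lambda_k})=0$ when $\eta$ is linearly dependent. Both cases of the theorem then follow in one line from the ring structure of $H^\bullet((\mathbb P^1)^d,\Z)$. You already recognize this mechanism when arguing $\xi_{H_\gamma}^2=0$; the correct fix is to use it throughout, pulling every $\xi_X$ back from the smooth ambient space, rather than attempting intersection theory on the singular variety $\bar\fh$.
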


We discuss the genesis of this paper, and its relation to other papers in the literature.   We established most of the results of this paper without being aware of the notion of matroid Schubert variety from \cite{AB} and \cite{BHMPWI}, but have incorporated results from this literature where it is useful to streamline the exposition.   In particular, we use results from \cite{AB} in an essential way to prove normality of $\bar \fh.$     Since our interest is focused on Lie theoretic aspects of $\bar \fh,$ the relation of $\bar \fh$ to $\bar H,$ and potential applications to the geometry of the resulting compactification of $\fgd,$ we generally have not emphasized perspectives from matroid Schubert varieties.    However, the stratification we describe for $\bar \fh$, which we define using root subsystems, may also be described as a standard stratification for matroid Schubert varieties, and, as stated above, Theorem \ref{m7} is a special case of the main result of \cite{HW}, although our proof is much simpler.  We also note that there is an open affine subvariety of $\bar \fh$ which meets all singular strata, and which is an example of the \textit{reciprocal varieties} introduced by Proudfoot and Speyer \cite{PS}.   In an earlier draft, we were unaware of the computations in \cite{OS1} and \cite{OT}, and we are grateful to the referee and Matt Douglass for informing us of these computations and of the connection between $k$-step good root subsystems and parabolic root systems.  As a result, we previously developed a variant of the work of Borel and de Siebenthal from \cite{BDS} in order to count good root subsystems, and this earlier draft  included a lengthy study of the combinatorics of $k$-step good root subsystems in order to compute the Betti numbers for $\bar \fh.$   In view of \cite{OT}, we have omitted this combinatorics, along with our variant of the work of Borel and de Siebenthal.

Due to the analogy with $\bar H,$ we view our results showing that $\bar \fh$ is normal and has finitely many $\fh$-orbits as indicating that $\bar \fh$ should be regarded as an additive toric variety.  To develop this theory, it would be desirable if each additive toric variety is uniquely determined by a set of combinatorial data such that the combinatorial data determining $\bar \fh$ is the Coxeter arrangement.  See Colin Crowley's paper \cite{Cro} for general results about additive toric varieties.   Forthcoming related work by the second author, Ana Balibanu, and Crowley constructs a flat degeneration such that the generic fiber is the toric variety associated to a rational central essential hyperplane arrangement and the special fiber is a non-reduced union of matroid Schubert varieties.   In this context, $\bar \fh$ is a component of the special fiber for the family with generic fiber $\bar H$, and this family is expected to be a subfamily of the degeneration of $\cl (\fg\oplus\fg)$ to $\cl.$  In further related work, we note that the paper \cite{IKLPR} gives a combinatorial construction of the real locus of $\bar \fh$ as a permutohedron modulo parallel faces for the Cartan subalgebra of a general complex semisimple $\fg.$   In forthcoming work of the second author with Leo Jiang, the authors give an analogous construction for any matroid Schubert variety of a central essential real hyperplane arrangement.

This paper is organized as follows.  In Section 2, we set up basic notation and define objects of interest in the study of $\bar \fh.$  In Section 3 we prove that $\bar \fh$ is normal and has an affine paving given by $\fh$-orbits.  In particular, Theorems \ref{m1}, \ref{m3} and \ref{m4} are proved in this section.  In Section 4, we relate our stratification of $\bar \fh$ to Coxeter arrangements, and use this to compute the Betti numbers of $\bar \fh$ for an arbitrary semisimple Lie algebra.  Theorem \ref{m6} is proved in this section.  In Section 5, we compute the Weyl group representation on $H^{\bullet}(\bar \fh, \C)$ and  determine the cohomology ring of $\bar \fh$ in terms of the basis given by the strata.  Theorem \ref{m7} is proved in this section. 

{\bf Acknowledgments.}  We are grateful to Ana Balibanu, Colin Crowley, Victor Ginzburg, Boming Jia, Leo Jiang, Joel Kamnitzer, Jacob Matherne, Eckhard Meinrenken, Nick Proudfoot, Nick Salter, and Shuddhodan Vasudevan for many stimulating discussions.  We would also like to thank Jacob Matherne and Nick Proudfoot for encouraging us to write up this paper, and we are especially grateful to Matt Douglass for his careful reading of an earlier draft of this paper and for his incisive and helpful comments, and the referee for several comments which have significantly improved our paper.

\section{Construction of \texorpdfstring{$\bar \fh$}{hbar}} \label{won}
In this section, we define $\bar \fh$ and give an embedding of $\bar \fh$ into a product of projective lines.  As a consequence, we show that $\bar \fh$ is a matroid Schubert variety.

Throughout this paper, we work over the complex numbers.  Let $G$ be a connected semisimple algebraic group of adjoint type and $\fg := \text{Lie} (G)$ the Lie algebra of $G.$  Write $\fgd$ for the dual space of $\fg.$  The adjoint (resp. coadjoint) action of $G$ on $\fg$ (resp. $\fgd$) will be denoted by $\Ad$ (resp. $\Ad^{\ast}$).  The adjoint (resp. coadjoint) action of $\fg$ on $\fg$ (resp. $\fgd$) will be denoted by $\ad$ (resp. $\ad^{\ast}$).  Define the algebraic group $D$ to be the semidirect product $G \ltimes \fgd$, where $G$ acts on $\fgd$ by the coadjoint action.  Specifically, as a set $D$ is the Cartesian product $G \times \fgd$, and group multiplication in $D$ is given by
\[
(g, \phi) (h, \psi) := (gh, \Ad^{\ast}_{h^{-1}} \phi + \psi) \quad \forall g, h \in G \text{ and } \phi, \psi \in \fgd.
\]
Inversion in $D$ is given by
\[
(g, \phi)^{-1} := (g^{-1}, - \Ad^{\ast}_g \phi) \quad \forall g \in G \text{ and } \phi \in \fgd.
\]

Consider the Lie algebra $\fd$ of $D$, which is the semidirect sum $\fg \ltimes \fgd.$  As a vector space, $\fd$ is the Cartesian product $\fg \times \fgd.$  The Lie bracket in $\fd$ is given by
\[
[(x, \phi), (y, \psi)] := ([x, y], \ad^{\ast}_x \psi - \ad^{\ast}_y \phi) \quad \forall x, y \in \fg \text{ and } \phi, \psi \in \fgd.
\]

The exponential map $\text{Exp}: \fd \rightarrow D$ is given by
\[
\text{Exp} (x, \phi) = \bigl( \text{exp}(x), \sum_{i=0}^{\infty} \frac{(-1)^{i}}{(i+1)!} (\ad^*_x)^i \phi \bigr) \quad \forall x \in \fg \text{ and } \phi \in \fgd,
\]
where $\text{exp}: \fg \rightarrow G$ is the exponential map for $G.$  This formula corrects an error in the formula for the exponential map that we presented in \cite[Equation (2)]{EL}, but this mistake does not affect the rest of that paper.  By abuse of notation, the adjoint action of $D$ on $\fd$ is also denoted by $\Ad.$  Using the formulas above, one verifies that the adjoint action of $D$ on $\fd$ is given by 
\begin{align} \label{adj}
\Ad_{(g, \phi)} (x, \psi) = (\Ad_g x, - \Ad^{\ast}_g \ad^{\ast}_x \phi + \Ad^{\ast}_g \psi) \quad \forall g \in G, x \in \fg \text{ and } \phi, \psi \in \fgd.
\end{align}

The algebraic group $D$ has the following important algebraic subgroups.  It is easy to check that the homomorphism of algebraic groups $$G \longrightarrow D, ~ g \longmapsto (g, 0)$$ is an embedding of $G$ into $D.$  We will regard $G$ as an algebraic subgroup of $D$ via this homomorphism.  View $\fgd$ as an additive algebraic group.  It is easy to check that the homomorphism of algebraic groups $$\fgd \longrightarrow D, ~ \phi \longmapsto (e, \phi),$$ where $e$ stands for the identity element of $G$, is an embedding of $\fgd$ into $D.$  We will regard $\fgd$ as an algebraic subgroup of $D$ via this homomorphism.  At the Lie algebra level, we view $\fg$ (resp. $\fgd$) as a Lie subalgebra of $\fd$ via the embedding $\fg \rightarrow \fd, ~ x \mapsto (x, 0)$ (resp. $\fgd \rightarrow \fd, ~ \phi \mapsto (0, \phi)$).

Introduce a bilinear form $(~, ~): \fd \otimes \fd \rightarrow \mathbb C$ by defining
\[
((x, \phi), (y, \psi)) := \phi (y) + \psi (x) \quad \forall x, y \in \fg \text{ and } \phi, \psi \in \fgd.
\]
It is easy to verify that the bilinear form $(~, ~)$ is symmetric, nondegenerate and $D$-invariant.  Here, $D$-invariance means that
\begin{align} \label{inv}
    (\Ad_{(g, \phi)} (x, \psi), \Ad_{(g, \phi)} (y, \chi)) = ((x, \psi), (y, \chi)) \quad \forall g \in G, x,y \in \fg \text{ and } \phi, \psi, \chi \in \fgd.
\end{align}

Let $V$ be a finite dimensional vector space and $(~, ~)$ a nondegenerate bilinear form on $V.$  A vector subspace $W$ of $V$ is called \textit{isotropic} if $(w, w') = 0$ for all $w, w' \in W.$  An isotropic subspace $W$ of $V$ is \textit{Lagrangian} if $W$ is maximal, with respect to inclusion, among all isotropic subspaces of $V.$  When $V$ is even dimensional,  this condition is equivalent to saying that $\dim W = \frac 12 \dim V.$

A \textit{Lagrangian subalgebra} of $\fd$ is a Lie subalgebra $\fl$ of $\fd$ which is also a Lagrangian vector subspace of $\fd$ with respect to the nondegenerate bilinear form $(~, ~)$ on $\fd$ introduced above.  For example, it is easy to check that $\fg$ and $\fgd$ are Lagrangian subalgebras of $\fd.$  Moreover, if $\fl$ is a Lagrangian subalgebra of $\fd$ and $(g, \phi)$ is an element of $D$, then $\Ad_{(g, \phi)} \fl$ is also a Lagrangian subalgebra of $\fd$ by Equation (\ref{inv}).

Let $\Gr(n, \fd)$ be the Grassmannian of $n$-dimensional vector subspaces of $\fd$, where $n := \dim \fg = \frac 12 \dim \fd.$  For an $n$-dimensional vector subspace of $\fd$, the property of being a Lie subalgebra (resp. being an isotropic vector subspace) of $\fd$ is a closed condition.  Therefore, the set of Lagrangian subalgebras of $\fd$ has a natural structure of a reduced closed subvariety of $\Gr(n, \fd)$, to be denoted by $\cl.$  Since $\Gr(n, \fd)$ is a projective variety, the variety $\mathcal L$ is projective as well.  We will call $\mathcal L$ the {\it variety of Lagrangian subalgebras} of $\fd.$  By Equation (\ref{inv}), the image of the map $$D \times \cl \longrightarrow \Gr(n, \fd), ~ ((g, \phi), \fl) \longmapsto \Ad_{(g, \phi)} \fl$$ is in $\cl$ and, hence, gives an action of the algebraic group $D$ on the variety $\cl.$  By abuse of notation, this action will be denoted by $\Ad$.  The following lemma is easy to prove.

\begin{lemma} \label{stabfg}
The stabilizer in $D$ at the point $\fg$ of $\cl$ is given by $$\text{Stab}_D (\fg) = \{(g, 0): g \in G\} \cong G.$$
\end{lemma}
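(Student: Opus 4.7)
The plan is to compute the stabilizer directly from the explicit formula (\ref{adj}) for the adjoint action of $D$ on $\fd$, applied to elements of the subalgebra $\fg = \{(x, 0) : x \in \fg\} \subset \fd$.

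First I would verify the easy inclusion: for any $g \in G$, formula (\ref{adj}) with $\phi = 0$ and $\psi = 0$ gives $\Ad_{(g, 0)}(x, 0) = (\Ad_g x, 0) \in \fg$, so $\{(g, 0) : g \in G\}$ is contained in $\text{Stab}_D(\fg)$.

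Second, I would tackle the reverse inclusion. Suppose $(g, \phi) \in D$ satisfies $\Ad_{(g, \phi)} \fg = \fg$. Applying (\ref{adj}) with $\psi = 0$ gives
\[
\Ad_{(g, \phi)} (x, 0) = (\Ad_g x, -\Ad^{\ast}_g \ad^{\ast}_x \phi) \qquad \forall x \in \fg.
\]
Requiring this to lie in $\fg$ forces $-\Ad^{\ast}_g \ad^{\ast}_x \phi = 0$ for all $x \in \fg$, and since $\Ad^{\ast}_g$ is invertible, equivalently $\ad^{\ast}_x \phi = 0$ for all $x \in \fg$. Thus $\phi$ is a $\fg$-invariant element of $\fgd$.

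Third, I would conclude by invoking semisimplicity: the coadjoint representation of $\fg$ on $\fgd$ is isomorphic (via the Killing form) to the adjoint representation, whose space of invariants is the center of $\fg$, which is zero since $\fg$ is semisimple. Hence $\phi = 0$, and $(g, \phi) = (g, 0)$ lies in $\{(g, 0): g \in G\} \cong G$, completing the proof. There is no real obstacle here; the only delicate point is remembering to use semisimplicity to kill the invariants in $\fgd$.
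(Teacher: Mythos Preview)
Your argument is correct and is precisely the direct computation one would expect; the paper itself omits the proof, remarking only that the lemma ``is easy to prove,'' so there is no alternative approach to compare against. Your use of semisimplicity to conclude $(\fgd)^{\fg}=0$ is the only nontrivial input, and it is exactly what is needed.
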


From Lemma \ref{stabfg}, we see that we can identify $D/G$ with the $D$-orbit in $\cl$ through the point $\fg.$  Since the subgroups $\fgd$ and $G$ of $D$ intersect only at the identity element $(e, 0)$ of $D$, the morphism $$\fgd \longrightarrow D/G$$ sending an element $\phi$ of $\fgd$ to the coset of $(e,\phi)$ is an isomorphism of varieties.  Therefore, we get an embedding $$\fgd \lhook\joinrel\longrightarrow{} \cl, ~ \phi \longmapsto \Ad_{(e,\phi)} \fg$$ of $\fgd$ into $\cl.$  More concretely, by Equation (\ref{adj}), this embedding sends an element $\phi$ of $\fgd$ to the Lagrangian subalgebra $$\{(x, - \ad^{\ast}_x \phi): x \in \fg\}$$ of $\fd.$

\begin{definition}
The closure in $\cl$ (equivalently, in $\Gr(n, \fd)$) of the image of the embedding above is called the \textit{wonderful compactification} of $\fgd,$ which will be denoted by $\bar{\fg}^{\ast}$.
\end{definition}

In the following example, we compute $\bar \fg^{\ast}$ in the case where $\fg$ is $\mathfrak {sl}_2.$

\begin{example} \label{ex}
Let $\fg = \mathfrak {sl}_2.$  Let
\begin{align*}
    E := 
    \begin{pmatrix}
        0 & 1 \\
        0 & 0
    \end{pmatrix}, ~ 
    F := 
    \begin{pmatrix}
        0 & 0 \\
        1 & 0
    \end{pmatrix}, ~ 
    H := 
    \begin{pmatrix}
        1 & 0 \\
        0 & -1
    \end{pmatrix}
\end{align*}
be the usual basis of $\fg.$  Identify $\fgd$ with $\fg$ via the trace form.  So, the dual basis of $\fgd (\cong \fg)$ is $\{F, E, \frac H2\}.$  Then we get a basis $$\{v_1 := (E, 0), ~ v_2 := (F, 0), ~ v_3 := (H, 0), ~ v_4 := (0, F), ~ v_5 := (0, E), ~ v_6 := (0, \frac H2)\}$$ of $\fd.$

For an element $\phi = aF + bE + c \frac H2$ of $\fgd$, one computes:
\begin{align*}
    - \ad_E^{\ast} \phi = cE - 2a \frac H2, ~ - \ad_F^{\ast} \phi = -cF + 2b \frac H2, ~ - \ad_H^{\ast} \phi = 2aF - 2bE.
\end{align*}
Hence, the embedding above sends $\phi$ to the Lagrangian subalgebra $${\rm Span} (v_1 + cv_5 - 2av_6, v_2 - cv_4 + 2bv_6, v_3 + 2av_4 - 2bv_5).$$

Consider the Pl\"ucker embedding $\Gr(3, \fd) \hookrightarrow \mathbb P(\wedge^3 \fd).$  The vector space $\wedge^3 \fd$ has an ordered basis $$\{v_1 \wedge v_2 \wedge v_3, ~ v_1 \wedge v_2 \wedge v_4, ~ v_1 \wedge v_2 \wedge v_5, ~ v_1 \wedge v_2 \wedge v_6, ~ v_1 \wedge v_3 \wedge v_4, \ldots, v_4 \wedge v_5 \wedge v_6\}.$$  With respect to this basis, the composition $$f: \fgd \lhook\joinrel\longrightarrow{} \cl \subseteq \Gr(3, \fd) \lhook\joinrel\longrightarrow \mathbb P(\wedge^3 \fd) \cong \mathbb P^{19}$$ sends $\phi = aF + bE + c\frac H2$ to $$[1:2a:-2b:0:c:0:-2b:2bc:-4ab:4b^2:0:c:-2a:2ac:-4a^2:4ab:c^2:-2ac:2bc:0].$$  From this we get a commutative diagram
\begin{equation*}
    \begin{tikzcd}
        \fgd \arrow[rd, "f"] \arrow[d, "\jmath"] & \\
        \mathbb P^3 \arrow[r, "\tilde f"] & \mathbb P^{19},
    \end{tikzcd}
\end{equation*}
where $$\jmath (aF + bE + c\frac H2) := [1:a:b:c]$$ and
\begin{align*}
    \tilde f([x_0:x_1:x_2:x_3]) := [x_0^2: 2x_0x_1: -2x_0x_2: 0: x_0x_3: 0: -2x_0x_2: 2x_2x_3: -4x_1x_2: 4x_2^2: 0: \\ x_0x_3: -2x_0x_1: 2x_1x_3: -4x_1^2: 4x_1x_2: x_3^2: -2x_1x_3: 2x_2x_3: 0].
\end{align*}

We claim that $\tilde f$ is a closed embedding.  In fact, one can check that $\tilde f$ is equal to the composition
\begin{equation*}
    \begin{tikzcd}
        \mathbb P^3 \arrow[r, "\Delta"] & \mathbb P^3 \times \mathbb P^3 \arrow[r, "{\rm Sg}"] & \mathbb P^{15} \arrow[r, "L"] & \mathbb P^{19},
    \end{tikzcd}
\end{equation*}
where $\Delta$ is the diagonal embedding, ${\rm Sg}$ is the Segre embedding $${\rm Sg}([x_0:x_1:x_2:x_3], [y_0:y_1:y_2:y_3]) := [x_0y_0: x_0y_1: x_0y_2: x_0y_3: x_1y_0: \cdots : x_3y_3]$$ and $L$ is the linear map
\begin{align*}
    L([z_0: \cdots : z_{15}]) := [z_0: 2z_1: -2z_2: 0: z_3: 0: -2z_8: 2z_{11}: -4z_6: 4z_{10}: 0: z_{12}: -2z_4: 2z_7: -4z_5: \\ 4z_9: z_{15}: -2z_{13}: 2z_{14}: 0].
\end{align*}
Since $\Delta, {\rm Sg}$ and $L$ are closed embeddings, so is $\tilde f.$

Now, since the image of $\jmath$ is dense in $\mathbb P^3$, we see that the closure of the image of $f$ can be identified with $\mathbb P^3.$  Therefore, we have obtained an isomorphism of varieties $\overline {\mathfrak {sl}}^{\ast}_2 \cong \mathbb P^3.$
\end{example}

Example \ref{ex} is somewhat misleading in the sense that, in general, $\bar \fg^{\ast}$ is expected to be a singular variety.  It was pointed out in \cite{EL} that the geometry of $\cl$ is far more intricate than that of the variety of Lagrangian subalgebras of $\fg \oplus \fg$, equipped with the bilinear form
\[
(\fg \oplus \fg) \otimes (\fg \oplus \fg) \longrightarrow \mathbb C, ~ ((x_1,y_1), (x_2,y_2)) \longmapsto \kappa(x_1,x_2) - \kappa(y_1,y_2),
\]
where $\kappa$ stands for the Killing form.  In particular, the geometry of $\bar \fg^{\ast}$ is expected to be more complicated than that of the wonderful compactification $X$ of $G$ studied by De Concini and Procesi \cite{DCP}.  It was observed in the literature, cf. \cite{EvJ}, that to study the geometry of $X$, it is usually helpful if one first studies the geometry of the wonderful compactification $\bar H$ of a Cartan subgroup $H$ of $G.$  As a first step towards understanding the geometry of $\bar \fg^{\ast}$, we introduce an analogue of $\bar H$ as follows.

Fix a Cartan subalgebra $\fh$ of $\fg.$  We have an embedding $$\fh \lhook\joinrel\longrightarrow{} \fgd, ~ h \longmapsto \kappa(h, -).$$  Composing this with the embedding $\fgd \hookrightarrow \cl$, we get an embedding $${\rm emb}: \fh \lhook\joinrel\longrightarrow{} \fgd \lhook\joinrel\longrightarrow{} \cl \subseteq \Gr(n, \fd).$$

\begin{definition}
The closure in $\cl$ (equivalently, in $\Gr(n, \fd)$) of the image of the above embedding of $\fh$ is called the \textit{wonderful compactification} of $\fh,$ which will be denoted by $\bar \fh$.
\end{definition}

\begin{example}
Let $\fg = \mathfrak {sl}_2.$  Retain the notation of Example \ref{ex}.  In order to determine $\bar \fh$, we only need to set the complex numbers $a, b$ in Example \ref{ex} to zero.  Then we see that $\bar \fh$ is isomorphic to the closure of the set $$\{[1: 0: 0: 0: c: 0: 0: 0: 0: 0: 0: c: 0: 0: 0: 0: c^2: 0: 0: 0]: c \in \mathbb C\}$$ in $\mathbb P^{19}$, which, in turn, is isomorphic to the closure of the set $$\{[1:c:c^2]: c \in \mathbb C\}$$ in $\mathbb P^2.$  The last variety is clearly the parabola $$\{[x_0:x_1:x_2]: x_0x_2 - x_1^2 = 0\}$$ in $\mathbb P^2.$  Hence $\bar \fh$ is isomorphic to $\mathbb P^1$ when $\fg$ is $\mathfrak {sl}_2.$
\end{example}

Identify $\fg$ with $\fgd$ via $$\fg \cong \fgd, ~ x \longmapsto \kappa(x, -).$$  This induces an identification of $\Gr(n, \fd)$ with $\Gr(n, \fg \times \fg).$  Under this identification, if an element $\phi$ of $\fgd$ is identified with $y \in \fg$, then the Lagrangian subalgebra $$\{(x, - \ad^{\ast}_x \phi): x \in \fg\}$$ of $\fd$ is identified with the vector subspace $$\{(x, [y,x]): x \in \fg\}$$ of $\fg \times \fg.$  The $\fgd$-action on $\fd$ from Equation (\ref{adj}) induces a $\fg$-action on $\fg \times \fg$ given by $(z,(x,u))\mapsto (x,[z,x]+u).$
Let $\Phi$ be the root system for $(\fg, \fh).$  For each root $\lambda \in \Phi$, choose a root vector $e_{\lambda}.$  When $y$ is in $\fh$, the vector space $\{(x, [y,x]): x \in \fg\}$ is clearly spanned by $\fh \oplus 0$ and $(e_{\lambda}, \lambda(y) e_{\lambda})$ for all $\lambda \in \Phi.$  This proves the following

\begin{lemma} \label{embed}
The composition $$\fh \lhook\joinrel\longrightarrow \cl \subseteq \Gr(n, \fd) \cong \Gr(n, \fg \times \fg)$$ sends an element $h$ of $\fh$ to ${\rm Span}(\fh \oplus 0, (e_{\lambda}, \lambda(h) e_{\lambda}): \lambda \in \Phi).$
\end{lemma}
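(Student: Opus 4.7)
The proof is essentially an unwinding of the several identifications made above, with the only nontrivial content being a careful check of how the coadjoint action transports under the Killing form.

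First I would trace the composition explicitly.  Starting with $h \in \fh$, the Killing form embedding produces $\phi := \kappa(h,-) \in \fgd$, and the embedding $\fgd \hookrightarrow \cl$ sends this to the Lagrangian subalgebra $\fl_\phi = \{(x,-\ad_x^*\phi) : x \in \fg\}$ of $\fd$.  Under the identification $\fg \cong \fgd$ via $y \mapsto \kappa(y,-)$, I need to show that if $\phi = \kappa(y,-)$ then $-\ad_x^*\phi$ corresponds to $[y,x] \in \fg$.  This is a one-line calculation using the convention $(\ad_x^*\phi)(w) = -\phi([x,w])$ (which is forced by the Lie bracket formula on $\fd$ stated in the excerpt) together with invariance of the Killing form:
\[
(-\ad_x^*\phi)(w) \;=\; \phi([x,w]) \;=\; \kappa(y,[x,w]) \;=\; \kappa([y,x], w).
\]
Hence under the identification $\Gr(n,\fd) \cong \Gr(n, \fg\times\fg)$, the subalgebra $\fl_\phi$ becomes $\{(x,[y,x]) : x \in \fg\}$, exactly as recorded in the paragraph preceding the lemma.

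Next I would specialize to the case $y = h \in \fh$ and exploit the root space decomposition $\fg = \fh \oplus \bigoplus_{\lambda \in \Phi} \fg_\lambda$.  Since a vector subspace $W \subseteq \fg \times \fg$ of the form $\{(x, T(x)) : x \in \fg\}$ for a linear map $T : \fg \to \fg$ is spanned by the images of any basis of $\fg$, it suffices to plug in a basis consisting of an arbitrary basis of $\fh$ together with the root vectors $e_\lambda$.  For $x \in \fh$ we have $[h,x]=0$, so those basis elements contribute exactly $\fh \oplus 0$.  For $x = e_\lambda$ we have $[h,e_\lambda] = \lambda(h)\, e_\lambda$, so we get the vector $(e_\lambda, \lambda(h)\, e_\lambda)$.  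Taking the span gives precisely the description in the statement.

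The only place any real care is needed is the sign/identification tracking in the first step; once that is pinned down, the rest is a direct application of the root space decomposition to the formula $\{(x,[h,x]) : x \in \fg\}$.  I do not foresee any substantive obstacle.
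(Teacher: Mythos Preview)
Your proposal is correct and follows exactly the same approach as the paper: identify $\fl_\phi$ with $\{(x,[y,x]) : x\in\fg\}$ via the Killing form, then specialize $y=h\in\fh$ and read off the span using the root space decomposition. Your write-up is in fact more careful than the paper's, which simply asserts the identification $-\ad_x^*\phi \leftrightarrow [y,x]$ without displaying the sign calculation you provide.
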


By abuse of notation, the composition in Lemma \ref{embed} will also be denoted by ${\rm emb}.$  Note that ${\rm emb}$ is $\fh$-equivariant for the translation action of $\fh$ on itself and the induced action of $\fh$ on $\Gr(n, \fg \times \fg).$

Let $V$ be a vector space and $\{v_1, \ldots, v_m\}$ a basis of $V.$  Define a morphism
\[
f: (\mathbb P^1)^m \longrightarrow \Gr(m, V \times V), ~ f \bigl( ([x_{i,0}: x_{i,1}] )_{1 \le i \le m} \bigr) := {\rm Span} ((x_{i,0}v_i, x_{i,1}v_i): 1 \le i \le m).
\]
It is easy to see that this morphism is well-defined.  Arguing in local coordinates, it is also easy to see that this morphism is a closed embedding.

Consider the case where $V = \fg.$  For this, consider a basis $$\{h_1, \ldots, h_r, e_{\lambda}: \lambda \in \Phi\}$$ of $\fg,$ where $\{h_1, \ldots, h_r\}$ is a basis of $\fh.$  The morphism ${\rm emb}: \fh \hookrightarrow \Gr(n, \fg \times \fg)$ factors through the morphism $f: (\mathbb P^1)^n \hookrightarrow \Gr(n, \fg \times \fg)$ explained above.  In fact, there is a commutative diagram
\begin{equation*}
    \begin{tikzcd}
        \fh \arrow[d, "{\rm emb}'"] \arrow[dr, "{\rm emb}"] \\
        (\mathbb P^1)^n \arrow[r, "f"] & \Gr(n, \fg \times \fg),
    \end{tikzcd}
\end{equation*}
where ${\rm emb}'$ sends an element $h$ of $\fh$ to the point of $(\mathbb P^1)^n$ whose $h_i$-component is $[1:0]$ for all $1 \le i \le r$ and whose $\lambda$-component is $[1:\lambda (h)]$ for all $\lambda \in \Phi.$   Further, if $\fh$ acts on $(\mathbb P^1)^n$ by the formula 
$$y\cdot ([x_{i,0}:x_{i,1}])_{i=1, \ldots, r} \times ([x_{\lambda,0},x_{\lambda,1}])_{\lambda \in \Phi} = ([x_{i,0}:x_{i,1}])_{i=1, \ldots, r} \times ([x_{\lambda,0},\lambda(y)x_{\lambda,0}+x_{\lambda,1}])_{\lambda \in \Phi},$$
then the morphisms ${\rm emb}'$ and $f$ are $\fh$-equivariant.

We identify the open set consisting of $x=[x_0:x_1]$ of  ${\mathbb P^1}$ with $x_0\not= 0$ with $\C$ using the map $x\mapsto  \frac{x_1}{x_0} \in \C$, and write $\infty$ for the point $[0:1].$

Choose a decomposition $\Phi = \Phi^+ \sqcup \Phi^-$ of $\Phi$ into the sets of positive and negative roots.  Let $d$ be the cardinality of $\Phi^+.$  Introduce a closed embedding $l: (\mathbb P^1)^d \rightarrow (\mathbb P^1)^n$ by sending the point $(x_{\lambda})_{\lambda \in \Phi^+}$ of $(\mathbb P^1)^d$ to the point of $(\mathbb P^1)^n$ whose $h_i$-component is $0$ for all $1 \le i \le r$, whose $\lambda$-component is $x_{\lambda}$ and whose $(-\lambda)$-component is $-x_{\lambda}$ for all $\lambda \in \Phi^+.$  Then it is clear that ${\rm emb}': \fh \hookrightarrow (\mathbb P^1)^n$ factors through $l: (\mathbb P^1)^d \rightarrow (\mathbb P^1)^n$, namely, there is a commutative diagram
\begin{equation*}
    \begin{tikzcd}
        \fh \arrow[d, "{\rm emb}''"] \arrow[dr, "{\rm emb}'"] \\
        (\mathbb P^1)^d \arrow[r, "l"] & (\mathbb P^1)^n,
    \end{tikzcd}
\end{equation*}
where ${\rm emb}''$ sends an element $h$ of $\fh$ to the point of $(\mathbb P^1)^d$ whose $\lambda$-component is $[1:\lambda (h)]$ for all $\lambda \in \Phi^+.$  These arguments prove the following

\begin{prop} \label{ambient}
As a variety, $\bar \fh$ is isomorphic to the closure in $(\mathbb P^1)^d$ of the image of the morphism ${\rm emb}'': \fh \hookrightarrow (\mathbb P^1)^d.$
\end{prop}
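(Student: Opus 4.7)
The plan is essentially to transport the closure through the two closed embeddings that were built up immediately before the statement. By concatenating the two commutative diagrams, I get
\[
{\rm emb} = f \circ l \circ {\rm emb}'' \colon \fh \lhook\joinrel\longrightarrow \Gr(n, \fg \times \fg),
\]
where both $f\colon (\mathbb P^1)^n \to \Gr(n, \fg \times \fg)$ and $l\colon (\mathbb P^1)^d \to (\mathbb P^1)^n$ were shown to be closed embeddings. Hence the composition $f \circ l\colon (\mathbb P^1)^d \to \Gr(n, \fg \times \fg)$ is again a closed embedding.

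The key general fact I will invoke is that for any closed embedding $\phi\colon X \to Y$ of varieties and any subset $S \subseteq X$, one has $\overline{\phi(S)}^{Y} = \phi(\overline{S}^{X})$, with the restriction of $\phi$ giving an isomorphism of varieties between $\overline{S}^{X}$ and $\overline{\phi(S)}^{Y}$. This holds because $\phi(X)$ is closed in $Y$, so any closure in $Y$ of a subset of $\phi(X)$ already lies in $\phi(X)$, and $\phi$ is a homeomorphism onto its image that carries the induced reduced subscheme structures to one another.

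Applying this to $\phi = f \circ l$ and $S = {\rm emb}''(\fh)$, I obtain
\[
\bar \fh \;=\; \overline{{\rm emb}(\fh)}^{\Gr(n,\fg\times\fg)} \;=\; \overline{(f\circ l)({\rm emb}''(\fh))}^{\Gr(n,\fg\times\fg)} \;=\; (f\circ l)\!\left(\overline{{\rm emb}''(\fh)}^{(\mathbb P^1)^d}\right),
\]
and the last equality is realized by an isomorphism of varieties, which is precisely the assertion of the proposition.

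No serious obstacle is anticipated: the entire argument is formal once one has verified that $f$ and $l$ are closed embeddings, and both verifications were carried out in the paragraphs preceding the proposition (the first by a local coordinate argument, the second because $l$ is the composition of a linear inclusion of factors with projection to $(\mathbb P^1)^n$, hence closed). The only care needed is to note that closed embeddings compose, and that closure in the ambient variety $\Gr(n,\fg\times\fg)$ agrees with closure taken inside the closed image $(f\circ l)((\mathbb P^1)^d)$.
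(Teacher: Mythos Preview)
Your proposal is correct and follows essentially the same approach as the paper: the paper constructs the two commutative diagrams, verifies that $f$ and $l$ are closed embeddings, and then simply writes ``These arguments prove the following'' before stating the proposition. You have merely made explicit the formal step that closures transport through closed embeddings, which is exactly the intended conclusion.
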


Another way of stating Proposition \ref{ambient} is as follows.  View $\mathbb C^d$ as the vector space of $(\Phi^+)$-tuples of complex numbers.  View $\mathbb C^d$ also as the affine chart of $(\mathbb P^1)^d$ consisting of points all of whose components are not equal to $\infty.$  The linear map $$\fh \longrightarrow \mathbb C^d, ~ h \longmapsto (\lambda(h))_{\lambda \in \Phi^+}$$ is clearly injective.  Via this injective linear map, we view $\fh$ as a vector subspace of $\mathbb C^d.$  Then Proposition \ref{ambient} says that $\bar \fh$ is isomorphic to the closure of the vector subspace $\fh$ of $\mathbb C^d$ in $(\mathbb P^1)^d.$  

\begin{definition} \label{matroidschubert}
The closure of a vector subspace $V$ of $\C^d$ in $(\mathbb P^1)^d$ is called a matroid Schubert variety.
\end{definition}

Thus, $\bar \fh$ is a matroid Schubert variety.  Matroid Schubert varieties were first considered in \cite{AB} and play a crucial role in the proof of the \textit{top-heavy conjecture} by Braden, Huh, Matherne, Proudfoot, and Wang \cite{BHMPWII}. 

\begin{rmk} \label{haction}
We let $\fh$ act on $(\mathbb P^1)^d$ by the formula 
$$
y\cdot([x_{\lambda,0},x_{\lambda,1}])_{\lambda \in \Phi^+} = ([x_{\lambda,0},\lambda(y)x_{\lambda,0} + x_{\lambda,1}])_{\lambda \in \Phi^+}
$$
so that the morphisms ${\rm emb}''$ and $l$ are $\fh$-equivariant.
\end{rmk}

\section{Algebro-geometric Properties of \texorpdfstring{$\bar \fh$}{hbar}}

In this section, we prove that $\bar \fh$ is a normal variety by showing it is regular in codimension one and using results from \cite{AB} to show that $\bar \fh$ is Cohen-Macaulay. In the process, we show that there are finitely many $\fh$-orbits on $\bar \fh$, and that the orbits give an affine paving of $\bar \fh.$  For this reason, it is reasonable to think of $\bar \fh$ as an additive analogue of a toric variety.   We also extend the action of the Weyl group $W$ on $\fh$ to a $W$-action on $\bar \fh$, and we explicitly compute $\bar \fh$ in the case of $\mathfrak{sl}_3$, in which case $\bar \fh$ is a singular hypersurface in $(\mathbb P^1)^3.$


\subsection{Defining Ideal and Boundary Components of \texorpdfstring{$\bar \fh$}{hbar}}

Let $S := \mathbb C[x_{\lambda,0}, x_{\lambda,1}: \lambda \in \Phi^+]$ be the multi-homogeneous coordinate ring of $(\mathbb P^1)^d.$  The ring $S$ is $(\mathbb Z_{\ge 0})^d$-graded so that the variables $x_{\lambda,0}, x_{\lambda,1}$, for $\lambda \in \Phi^+$, have multi-degree $$(0, \ldots, 0, 1, 0, \ldots, 0),$$ where the only $1$ appears in the $\lambda$-component.  An element of $S$ is said to be multi-homogeneous if all its terms have the same multi-degree which we refer to as ``degree'' in the sequel.  It is easy to see that every element $p$ of $S$ can be uniquely written in the form $$p = \sum \limits_{u \in (\mathbb Z_{\ge 0})^d} p_u,$$ where $p_u$ is multi-homogeneous of degree $u$ for each $u \in (\mathbb Z_{\ge 0})^d.$  The $p_u$'s will be referred to as the multi-homogeneous components of $p.$  Let $J$ be an ideal of $S.$  We say that $J$ is a multi-homogeneous ideal if it can be generated by multi-homogeneous polynomials.  Equivalently, $J$ is multi-homogeneous if whenever an element of $S$ is in $J$, so are all its multi-homogeneous components.

As in projective geometry, for each multi-homogeneous ideal $J$ of $S$, the vanishing locus $V(J)$ of $J$ in $(\mathbb P^1)^d$ is well-defined.  
Choose multi-homogeneous generators $p_1, \ldots, p_k$ of $J.$  Then the set $$\{x = ([x_{\lambda,0}: x_{\lambda,1}])_{\lambda \in \Phi^+} \in (\mathbb P^1)^d: p_1 \bigl( (x_{\lambda,0},x_{\lambda,1})_{\lambda \in \Phi^+} \bigr) = 0, \ldots, p_k \bigl( (x_{\lambda,0},x_{\lambda,1})_{\lambda \in \Phi^+} \bigr) = 0\}$$ is well-defined and independent of the choice of multi-homogeneous generators of $J.$  This set is the vanishing locus $V(J).$  Our first goal is to find a multi-homogeneous ideal $J$ of $S$, as simple as possible, whose vanishing locus is $\bar \fh$, and use it to analyze the boundary $\bar \fh - \fh$ of $\fh.$

Let $R := \mathbb C[x_{\lambda}: \lambda \in \Phi^+]$ be the coordinate ring of $\mathbb C^d,$ which we regard as an affine open set of $(\mathbb P^1)^d.$  For $P \in R$, the multi-homogenization of $P$ is the multi-homogeneous polynomial $P^h \in S$ such that $$P \bigl( (\frac{x_{\lambda,1}}{x_{\lambda,0}})_{\lambda \in \Phi^+} \bigr) = \frac{P^h \bigl( (x_{\lambda,0}, x_{\lambda,1})_{\lambda \in \Phi^+} \bigr)}{\prod \limits_{\lambda \in \Phi^+} x_{\lambda,0}^{a_{\lambda}}},$$ where $a_{\lambda} \in \mathbb Z_{\ge 0}$ for all $\lambda \in \Phi^+$ and the numerator and denominator of the right-hand side have no common prime divisors.  For an ideal $I$ of $R$, its multi-homogenization is the ideal $$I^h := \langle P^h: P \in I \rangle$$ of $S$ generated by the multi-homogenization of elements of $I.$  We remark that, in general, if $P_1, \ldots, P_k$ are generators of $I$, the multi-homogeneous ideal $\langle P_1^h, \ldots, P_k^h \rangle$ could be \textit{properly} contained in $I^h.$

Let $I(\fh) \lhd R$ be the ideal consisting of all polynomials vanishing on $\fh$, where we have regarded $\fh$ as a closed subvariety of $\mathbb C^d$ via ${\rm emb}''.$  By Proposition \ref{ambient}, it is clear that $\bar \fh$ is isomorphic to the vanishing locus $V(I(\fh)^h)$ of the multi-homogenization of $I(\fh).$  We aim to find a multi-homogeneous ideal $J$ (possibly properly) contained in $I(\fh)^h$ such that the vanishing locus of $J$ is $\bar \fh$, at least set theoretically, and computation with $J$ is easier than with $I(\fh)^h.$

\begin{definition}\label{multihom}
Define $J(\Phi)$ to be the multi-homogeneous ideal $$\langle P^h: P ~ \text{is a linear polynomial in} ~ R ~ \text{that vanishes on} ~ \fh\rangle$$ of $S.$
\end{definition}

Write $Z \subseteq (\mathbb P^1)^d$ for the vanishing locus of $J(\Phi).$  It is clear from the definition of $J(\Phi)$ that $$Z \supseteq \bar \fh \supseteq \fh.$$  To analyze the difference $Z - \fh$, we need to introduce some notation.  A root subsystem $\Phi'$ of $\Phi$ is called \textit{closed} if for all $\lambda, \mu \in \Phi'$ with $\lambda + \mu \in \Phi$, one has $\lambda + \mu \in \Phi'.$  Let $(x_{\lambda})_{\lambda \in \Phi^+}$ be a point of $Z - \fh \subseteq (\mathbb P^1)^d.$  Define 
\begin{align}\label{findef}
{\rm Fin} := \{\lambda \in \Phi^+: x_{\lambda} \neq \infty\}.
\end{align}
The following lemma can be easily deduced from the fact that if $\lambda, \mu, \lambda+\mu \in \Phi^+$, then $x_{\lambda+\mu}-x_{\lambda}-x_{\mu}$ vanishes on $\fh.$

\begin{lemma} \label{fin}
The set ${\rm Fin} \cup (-{\rm Fin})$ is a closed root subsystem of $\Phi.$
\end{lemma}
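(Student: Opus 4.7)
The plan is to leverage the hint stated just before the lemma. Fix positive roots $\lambda, \mu$ with $\lambda + \mu \in \Phi^+$. Since $(\lambda+\mu)(h)=\lambda(h)+\mu(h)$, the linear polynomial $P_{\lambda,\mu} := x_{\lambda+\mu}-x_\lambda-x_\mu$ vanishes on $\fh$, and hence by Definition \ref{multihom} its multi-homogenization
$$P_{\lambda,\mu}^h \;=\; x_{\lambda+\mu,1}x_{\lambda,0}x_{\mu,0}\;-\;x_{\lambda,1}x_{\lambda+\mu,0}x_{\mu,0}\;-\;x_{\mu,1}x_{\lambda+\mu,0}x_{\lambda,0}$$
lies in $J(\Phi)$ and therefore vanishes at every point of $Z \supseteq \bar\fh$, in particular at our chosen $(x_\lambda)_{\lambda\in\Phi^+}$. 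This vanishing relation is the only input; the rest is a short case analysis.

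By construction, ${\rm Fin}\cup(-{\rm Fin})$ is symmetric under $\alpha\mapsto -\alpha$, so to show it is a closed root subsystem it suffices to show that whenever $\alpha,\beta\in{\rm Fin}\cup(-{\rm Fin})$ with $\alpha+\beta\in\Phi$, the sum lies in ${\rm Fin}\cup(-{\rm Fin})$. Negating the triple $(\alpha,\beta,\alpha+\beta)$ if necessary, I may assume $\alpha+\beta\in\Phi^+$. Up to swapping $\alpha$ and $\beta$, exactly two configurations remain.

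\emph{Case 1:} $\alpha,\beta\in\Phi^+\cap{\rm Fin}$. Apply the identity $P_{\alpha,\beta}^h=0$. If $\alpha+\beta\notin{\rm Fin}$, i.e.\ $x_{\alpha+\beta}=\infty$, then $x_{\alpha+\beta,0}=0$ while $x_{\alpha+\beta,1}\neq 0$. The last two monomials of $P_{\alpha,\beta}^h$ vanish, leaving $x_{\alpha+\beta,1}\,x_{\alpha,0}\,x_{\beta,0}=0$; but all three factors are nonzero since $x_\alpha,x_\beta\neq\infty$, a contradiction. \emph{Case 2:} $\alpha\in\Phi^+\cap{\rm Fin}$ and $\beta=-\gamma$ with $\gamma\in\Phi^+\cap{\rm Fin}$, and $\alpha-\gamma\in\Phi^+$. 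Then $\alpha=\gamma+(\alpha-\gamma)$ is a sum of positive roots, so I apply $P_{\gamma,\alpha-\gamma}^h=0$; the same bookkeeping as in Case 1 forces $x_{\alpha-\gamma}\neq\infty$, i.e.\ $\alpha-\gamma\in{\rm Fin}$.

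I expect no genuine obstacle: the argument is driven entirely by the linear relation coming from additivity of roots, and the only care required is tracking signs and the multi-homogenization of a single linear polynomial. No further structure of the root system (e.g.\ Weyl group or explicit type) enters, so the proof will be uniform in $\Phi$.
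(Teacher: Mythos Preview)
Your proof is correct and follows exactly the approach the paper hints at: use the vanishing of the multi-homogenization of $x_{\lambda+\mu}-x_\lambda-x_\mu$ on $Z$ to force closure under addition, with a case split on signs. The paper gives only the one-sentence hint, and you have filled in the details accurately, including the reduction to $\alpha+\beta\in\Phi^+$ and the two residual cases.
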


Denote by $\mathcal S$ the set of all closed root subsystems of $\Phi$ of rank $r - 1,$ where $r = \rk \Phi.$  The set $\mathcal S$, together with inclusion, is a poset.

\begin{definition}
A \textit{good root subsystem} of $\Phi$ is a maximal member of the poset $(\mathcal S, \subseteq).$
\end{definition}

\begin{example} \label{maxgood}
Let $\Pi = \{ \alpha_1, \dots, \alpha_r \}$ be a system of simple roots.  Then for $i=1, \dots, r,$ the root subsystem generated by $\Pi - \{ \alpha_i \}$
is trivially a good root subsystem.
\end{example}

Let $$P = \sum\limits_{\lambda \in \Phi^+} c_{\lambda} x_{\lambda}$$ be a linear polynomial in $R.$  The \textit{support} $\text{supp}(P)$ of $P$ is defined to be the set $$\{\lambda \in \Phi^+: c_{\lambda} \neq 0\}.$$

\begin{lemma}\label{goodsupport}
Let $\Phi'$ be a good root subsystem of $\Phi$ and $P \in R$ a linear polynomial that vanishes on $\fh.$  Then the following are the only possibilities:
\begin{enumerate}
    \itemsep 0em
    \item Either ${\rm supp}(P) \subseteq \Phi'$; or
    \item There are at least two elements of ${\rm supp}(P)$ that are not contained in $\Phi'.$
\end{enumerate}
\end{lemma}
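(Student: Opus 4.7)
The plan is to prove the lemma by contradiction, ruling out the intermediate case in which exactly one root of $\text{supp}(P)$ lies outside $\Phi'$.

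First I would translate the hypothesis that $P$ vanishes on $\fh$ into a linear relation among roots. By construction of the embedding ${\rm emb}'': \fh \hookrightarrow \C^d$, the pullback of the coordinate function $x_\lambda$ is the linear functional $h \mapsto \lambda(h)$ on $\fh$. Hence a linear polynomial $P = \sum_{\lambda \in \Phi^+} c_\lambda x_\lambda$ vanishes on $\fh$ if and only if
\[
\sum_{\lambda \in \Phi^+} c_\lambda \lambda = 0 \quad \text{in } \fh^\ast.
\]

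Next, suppose toward a contradiction that neither alternative of the lemma holds. Then there is exactly one element $\lambda_0 \in \text{supp}(P) \setminus \Phi'$, while every other root in $\text{supp}(P)$ lies in $\Phi'$. The displayed relation rearranges to
\[
c_{\lambda_0}\,\lambda_0 \;=\; -\!\!\!\sum_{\substack{\lambda \in \text{supp}(P) \\ \lambda \neq \lambda_0}} c_\lambda\, \lambda \;\in\; \Span(\Phi'),
\]
and since $c_{\lambda_0} \neq 0$ we deduce $\lambda_0 \in \Span(\Phi')$.

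To close the argument I would introduce the set $\Phi'' := \Phi \cap \Span(\Phi')$, and verify that $\Phi''$ belongs to the poset $\mathcal S$ of closed rank $r-1$ root subsystems: it is symmetric about $0$, it is closed (if $\mu,\nu \in \Phi''$ and $\mu+\nu \in \Phi$, then $\mu+\nu \in \Span(\Phi')$, hence $\mu+\nu \in \Phi''$), and its rank equals $\dim \Span(\Phi'') = \dim \Span(\Phi') = r-1$. Since $\Phi' \subseteq \Phi''$ and $\Phi'$ is a maximal (good) element of $(\mathcal S, \subseteq)$, we must have $\Phi' = \Phi''$. But then $\lambda_0 \in \Phi''= \Phi'$, contradicting $\lambda_0 \notin \Phi'$.

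I do not expect any real obstacle here: the heart of the argument is the recognition that good root subsystems are exactly the closed rank $r-1$ subsystems that are \emph{saturated} inside their span, which is precisely what rules out picking up a single extra root from a linear relation. The only minor care needed is in checking that $\Phi \cap \Span(\Phi')$ is genuinely closed and of the correct rank, both of which are immediate.
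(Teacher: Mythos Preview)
Your proof is correct and follows essentially the same route as the paper's: both argue by contradiction, translate the vanishing of $P$ on $\fh$ into the relation $\sum c_\lambda \lambda = 0$, deduce that the unique outlier $\lambda_0$ lies in the span of $\Phi'$, and then exhibit a closed rank $r-1$ subsystem strictly larger than $\Phi'$ to contradict maximality. The only cosmetic difference is that you take $\Phi'' = \Phi \cap \Span(\Phi')$, whereas the paper takes the set of roots that are integral combinations of $\Phi' \cup \{\lambda_0\}$; both constructions work equally well.
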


\begin{proof}
Suppose that $$P = \sum\limits_{\lambda \in \Phi^+} c_{\lambda} x_{\lambda}$$ vanishes on $\fh$ and $\lambda_0$ is the only positive root such that $c_{\lambda_0} \neq 0$ and $\lambda_0 \notin \Phi'.$  Since $P$ vanishes on $\fh$, we must have $$\sum\limits_{\lambda \in \Phi^+} c_{\lambda} \lambda = 0.$$  Hence $\lambda_0$ is a linear combination of elements of $\Phi'.$  It follows that the set of elements of $\Phi$ which can be expressed as an integral linear combination of $\Phi'$ and $\lambda_0$ is a closed root subsystem of $\Phi$ of rank $r - 1$, which properly contains $\Phi'.$  This contradicts the assumption that $\Phi'$ is maximal in $\mathcal S.$
\end{proof}

For each good root subsystem $\Phi'$ of $\Phi$, choose, once and for all, a semisimple Lie subalgebra $\fg'$ of $\fg$ and a Cartan subalgebra $\fh'$ of $\fg'$ such that the root system for $(\fg', \fh')$ is isomorphic to $\Phi'.$  We may choose $(\fg', \fh')$ such that $\fh'$ is a Lie subalgebra of $\fh.$  Fix a good root subsystem $\Phi'$ of $\Phi.$  A linear polynomial $P \in R$ that vanishes on $\fh$ is said to be \textit{of type I} if $\text{supp}(P) \subseteq \Phi'$, and is said to be \textit{of type II} if there are at least two elements of $\text{supp}(P)$ which are not contained in $\Phi'.$  Consider a point $x = (x_{\lambda})_{\lambda \in \Phi^+}$ of $(\mathbb P^1)^d$ such that $x_{\lambda} = \infty$ for all $\lambda \notin \Phi'.$  Then, for every $P$ of type II, the multi-homogenization $P^h$ automatically vanishes at $x.$  Hence, the point $x$ is in $Z$ if and only if $P^h$ vanishes at $x$ for all $P$ of type I.  Put $(\Phi')^+ := \Phi' \cap \Phi^+.$  Then $x$ is in $Z$ if and only if $(x_{\lambda})_{\lambda \in (\Phi')^+} \in (\mathbb P^1)^{(\Phi')^+}$ is in the vanishing locus of the multi-homogeneous ideal $J(\Phi')$, defined similarly as in Definition \ref{multihom}.

\begin{definition}\label{stratumphi}
Let $\Phi'$ be a good root subsystem of $\Phi.$  Define $C({\Phi'})$ to be the subset $$\{(x_{\lambda})_{\lambda \in \Phi^+} \in (\mathbb P^1)^d: x_{\lambda} = \infty ~ \text{for all} ~ \lambda \notin \Phi' ~ \text{and} ~ (x_{\lambda})_{\lambda \in (\Phi')^+} \in V(J(\Phi'))\}$$ of $Z.$
\end{definition}

By definition, Lemma \ref{goodsupport}, and our analysis above, it is clear that $$C({\Phi'}) \subseteq Z - \fh$$ for all good root subsystems $\Phi'.$

\begin{theorem} \label{mainthm}
As algebraic sets, we have $$Z = \bar \fh.$$
\end{theorem}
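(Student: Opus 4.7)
The containment $\bar \fh \subseteq Z$ is immediate since $J(\Phi) \subseteq I(\fh)^h$, so the work is in proving $Z \subseteq \bar \fh$. My plan is, given $x \in Z$, to directly exhibit a one-parameter family in $\fh$ converging to $x$ in $(\mathbb P^1)^d$, avoiding any induction on rank or a priori case analysis on good root subsystems. Write $F := {\rm Fin}(x)$. The case $F = \Phi^+$ is trivial: then $x$ lies in the affine chart $\C^d$, and the condition $x \in V(J(\Phi))$ reduces to the vanishing at $x$ of every linear polynomial in $I(\fh)$, so $x \in \fh$. Assume henceforth $F \subsetneq \Phi^+$. I will construct $h_0, \xi \in \fh$ such that $\lambda(h_0) = x_\lambda$ and $\lambda(\xi) = 0$ for all $\lambda \in F$, while $\mu(\xi) \neq 0$ for all $\mu \in \Phi^+ - F$; the curve $h_t := h_0 + t^{-1}\xi$ will then converge to $x$ as $t \to 0$.

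The construction is driven by two subclaims, each proved by evaluating the multi-homogenization of a well-chosen element of $I(\fh)$ at $x$. Subclaim (A): $(x_\lambda)_{\lambda \in F}$ lies in the image of the projection $\fh \to \C^F$ sending $h \mapsto (\lambda(h))_{\lambda \in F}$, which gives $h_0$. Indeed, for any $(c_\lambda)_{\lambda \in F}$ with $\sum c_\lambda \lambda = 0$ in $\fh^{\ast}$, the polynomial $P = \sum c_\lambda x_\lambda$ lies in $I(\fh)$, and substituting $x_{\lambda,0} = 1$, $x_{\lambda,1} = x_\lambda$ into $P^h(x) = 0$ recovers exactly the required relation $\sum c_\lambda x_\lambda = 0$. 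Subclaim (B), the crucial step: no $\mu \in \Phi^+ - F$ lies in $\Span(F) \subseteq \fh^{\ast}$. Otherwise a relation $\mu = \sum_{\lambda \in F} c_\lambda \lambda$ yields the polynomial $P = x_\mu - \sum c_\lambda x_\lambda$ in $I(\fh)$, and evaluating $P^h$ at $x$ using $x_{\mu,0} = 0$, $x_{\mu,1} = 1$, and $x_{\lambda,0} = 1$ for $\lambda \in F$ gives $P^h(x) = 1 \neq 0$, contradicting $x \in Z$.

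From (B), $F$ cannot span $\fh^{\ast}$ (else $\Phi^+ - F$ would be empty, contradicting $F \subsetneq \Phi^+$), so $V := \bigcap_{\lambda \in F} \ker \lambda$ is a positive-dimensional subspace of $\fh$, and each $\mu \in \Phi^+ - F$ restricts to a nonzero linear form on $V$ since $V^{\perp} = \Span(F)$. A generic $\xi \in V$ then avoids every hyperplane $\ker(\mu|_V)$, yielding the required $\xi$. The limiting analysis is routine: for $\lambda \in F$, $\lambda(h_t) = x_\lambda$ is constant, while for $\mu \in \Phi^+ - F$, $\mu(h_t) = \mu(h_0) + t^{-1}\mu(\xi) \to \infty$ as $t \to 0$; read in $(\mathbb P^1)^d$ via ${\rm emb}''$, this says $h_t \to x$, so $x \in \bar \fh$. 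The main obstacle is Subclaim (B), the ``saturation'' property, which forces $F \cup (-F) = \Phi \cap \Span(F \cup (-F))$: it is what ensures the combinatorial type $F$ of $x$ is attainable as a limit of a single one-parameter family inside $\fh$, and it is proved by a one-line homogenization computation rather than by any structural argument about root subsystems.
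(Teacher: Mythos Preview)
Your proof is correct and genuinely more direct than the paper's. The paper proceeds by induction on $\rk\Phi$: it first shows that the irreducible components of $Z-\fh$ are exactly the subvarieties $C(\Phi')$ indexed by good root subsystems $\Phi'$, identifies each $C(\Phi')$ with the analogous $\bar\fh'$ by the induction hypothesis, and then, only for points of the open stratum $\mathring C(\Phi')$, writes down a one-parameter family in $\fh$ converging to the point (using that $\Phi'$ is saturated in $\Phi$ because it is good). Your argument bypasses the induction and the classification of boundary components entirely: Subclaim~(B) establishes directly from $x\in Z$ that ${\rm Fin}(x)$ is saturated in $\Phi^+$, i.e.\ $(\Phi^+ - F)\cap\Span(F)=\emptyset$, and this is exactly what is needed to build the limiting family $h_0 + t^{-1}\xi$ in one stroke for an arbitrary $x\in Z$. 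The homogenization computation behind~(B) is the same one the paper uses to show $\rk({\rm Fin}\cup(-{\rm Fin}))<r$, but you extract its full strength rather than just the rank bound. The trade-off is that the paper's inductive structure simultaneously yields the description of the boundary components (Theorem~\ref{m1'}) as a by-product, whereas your argument proves $Z=\bar\fh$ cleanly but gives no information about the stratification; one would still need the paper's analysis to recover that.
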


\begin{proof}
We induct on the rank $r$ of $\fg.$  The statement is easy when $r = 1.$  We assume that $r > 1$ and the statement has been proved for $1, \ldots, r - 1.$

For each good root subsystem $\Phi'$ of $\Phi$, the induction hypothesis implies that $C(\Phi')$ can be identified with $\bar \fh'.$  Since $\fh'$ is $(r - 1)$-dimensional and irreducible, so are $\bar \fh'$ and $C(\Phi').$  Hence $C(\Phi')$ is an $(r - 1)$-dimensional irreducible closed subset of $Z$ which is disjoint from $\fh.$  It follows that $C(\Phi')$ is an irreducible component of $Z - \fh.$  Therefore, we get a map
\[C: \{\text{good root subsystems of} ~ \Phi\} \longrightarrow \{\text{irreducible components of} ~ Z - \fh\}: \Phi' \longmapsto C(\Phi').
\]

We first show that $C$ is injective.  Let $\Phi', \Phi''$ be distinct good root subsystems.  Then there exists a positive root $\lambda_0 \in \Phi' - \Phi''.$  Let $x = (x_{\lambda})_{\lambda \in \Phi^+} \in C(\Phi') \cap C({\Phi''}).$  Since $\lambda_0 \notin \Phi''$, we have $x_{\lambda_0} = \infty.$  This, together with the fact $\lambda_0 \in \Phi'$, implies that $x$ is in the boundary $C(\Phi') - \fh' \cong \bar \fh' - \fh'$ of $\fh'.$  Since $$\dim (\bar \fh' - \fh') < \dim (\fh'),$$ we see that $$\dim (C(\Phi') \cap C(\Phi'')) < \dim (\fh') = \dim (C(\Phi')) = \dim (C(\Phi'')).$$  It follows that $C(\Phi') \cap C(\Phi'')$ is a proper subset of both $C(\Phi')$ and $C(\Phi'').$  Hence $C(\Phi')$ and $C(\Phi'')$ are distinct.

Next we show that $C$ is surjective.  Let $x = (x_{\lambda})_{\lambda \in \Phi^+} \in Z - \fh \subseteq (\mathbb P^1)^d.$  Define ${\rm Fin}$ as in Equation (\ref{findef}).  Then ${\rm Fin} \cup (-{\rm Fin})$ is a closed root subsystem of $\Phi$ by Lemma \ref{fin}.  Suppose that $\rk ({\rm Fin} \cup (-{\rm Fin})) = r.$  Then there exist linearly independent positive roots $\lambda_1, \ldots, \lambda_r$ such that $x_{\lambda_i} \neq \infty$ for all $1 \le i \le r.$  For each $\lambda \in \Phi^+$, write $\lambda = \sum \limits_{i = 1}^r b_i \lambda_i$ for some $b_1, \ldots, b_r \in \mathbb Q.$  Then $$x_{\lambda} - \sum \limits_{i = 1}^r b_i x_{\lambda_i}$$ is a linear polynomial in $R$ that vanishes on $\fh.$  Hence the $\lambda$-component $x_{\lambda}$ of $x$ is also not equal to $\infty.$  Therefore, none of the components of $x$ is equal to $\infty$, i.e., $x \in \fh$, a contradiction.  Consequently, we see that $\rk ({\rm Fin} \cup (-{\rm Fin})) \le r - 1.$  Therefore, ${\rm Fin} \cup (-{\rm Fin})$ is contained in a good root subsystem $\Phi'.$  It then follows from the definition of ${\rm Fin}$ and $C(\Phi')$ that $x \in C(\Phi').$  We have proved that every point of $Z - \fh$ is contained in some $C(\Phi')$, so the map $C$ is surjective.

Finally we show that $Z = \bar \fh.$  We only need to prove that every point of $Z - \fh$ is in the closure of $\fh$ with respect to the classical topology.  Since $C$ is a bijection, we only need to prove that, for each good root subsystem $\Phi'$, every point of $C(\Phi')$ is in the closure of $\fh$ with respect to the classical topology.  By the induction hypothesis, we need only prove that, for each good root subsystem $\Phi'$, every point of the set
\begin{equation}\label{stratum}
\begin{split}
\mathring C(\Phi') := \{x = (x_{\lambda})_{\lambda \in \Phi^+} \in (\mathbb P^1)^d: ~ & x_{\lambda} = \infty ~ \forall \lambda \notin \Phi', x_{\lambda} \neq \infty ~ \forall \lambda \in \Phi' \\ & \text{and} ~ (x_{\lambda})_{\lambda \in (\Phi')^+} ~ \text{is in the vanishing locus of} ~ J(\Phi')\}
\end{split}
\end{equation}
is in the closure of $\fh$ with respect to the classical topology.  Fix such a $\Phi'$ and $x \in \mathring C(\Phi').$  Since $\rk \Phi' = r - 1$, there exists $\lambda_0 \in \Phi^+ - \Phi'$ such that $${\rm Span}_{\mathbb Q} (\lambda_0, \Phi') = {\rm Span}_{\mathbb Q} (\Phi).$$  Choose linearly independent positive roots $\lambda_1, \ldots, \lambda_{r-1}$ in $\Phi'.$  Then every $\lambda \in \Phi$ can be uniquely expressed as
\begin{align} \label{expression}
\lambda = q_0 \lambda_0 + \sum\limits_{i=1}^{r-1} q_i \lambda_i,    
\end{align}
where $q_0, q_1, \ldots, q_{r-1}$ are rational numbers.  Since $\Phi'$ is a good root subsystem of $\Phi$, one has $$\lambda \in \Phi' ~ \text{if and only if} ~ q_0 = 0.$$  It is clear that, for each $t \in \mathbb C$, there exists a unique point $x_t$ of $\fh$ whose $\lambda_0$-component is $t$ and whose $\lambda_i$-component is $x_{\lambda_i}$ for all $1 \le i \le r - 1.$  Now it follows from Equation (\ref{expression}) that $$\lim \limits_{t \rightarrow \infty} x_t = x.$$
\end{proof}

In the course of proving Theorem \ref{mainthm}, we have proved

\begin{theorem} \label{m1'}
$\ $
\begin{enumerate}
    \itemsep 0em
    \item The map
    \[
    C: \{\text{good root subsystems of} ~ \Phi\} \longrightarrow \{\text{irreducible components of} ~ \bar \fh - \fh\}: \Phi' \longmapsto C(\Phi')
    \]
    is a bijection.  In particular, the boundary $\bar \fh - \fh$ of $\fh$ has as many irreducible components as there are good root subsystems of $\Phi.$
    \item For a good root subsystem $\Phi'$ of $\Phi$, $C(\Phi')$ is isomorphic as a variety to $\bar \fh'$, the wonderful compactification of $\fh'.$  In particular, we have $$\dim C(\Phi') = \dim \fh - 1,$$ so that the boundary $\bar \fh - \fh$ of $\fh$ is of pure dimension $\dim \fh -1.$
\end{enumerate}
\end{theorem}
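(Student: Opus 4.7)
The plan is to harvest the proof of Theorem \ref{mainthm} directly, since by construction that proof (an induction on $r := \rk \fg$) has already established everything required here.

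I would first establish part (2). The projection $(\mathbb P^1)^d \to (\mathbb P^1)^{|(\Phi')^+|}$ forgetting the coordinates indexed by $\Phi^+ - (\Phi')^+$ restricts to an isomorphism from $C(\Phi')$ onto $V(J(\Phi'))$, since along $C(\Phi')$ those forgotten coordinates are constantly $\infty$. Applying the induction hypothesis, that is, Theorem \ref{mainthm} for the strictly smaller pair $(\fg', \fh')$, identifies $V(J(\Phi'))$ with $\bar\fh'$. This gives $C(\Phi') \cong \bar\fh'$, whence $\dim C(\Phi') = \dim \fh' = \rk \Phi' = r - 1$.

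For part (1), I would first note that each $C(\Phi')$ is a closed irreducible subset of $\bar\fh - \fh$ (it is isomorphic to $\bar\fh'$, the closure of the irreducible variety $\fh'$) of dimension exactly $r - 1$; since $\dim \bar\fh = r$ and $\fh$ is open and dense in $\bar\fh$, the boundary has dimension at most $r - 1$, so each $C(\Phi')$ is necessarily an irreducible component. Injectivity of $C$ then follows by a dimension count: if $\Phi' \neq \Phi''$ are both good, pick $\lambda_0 \in \Phi' - \Phi''$; any $x \in C(\Phi') \cap C(\Phi'')$ has $x_{\lambda_0} = \infty$, so its image under the identification $C(\Phi') \cong \bar\fh'$ lies in $\bar\fh' - \fh'$, forcing $\dim(C(\Phi') \cap C(\Phi'')) \le r - 2$, which rules out equality. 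Surjectivity is exactly the argument given in the surjectivity step of the proof of Theorem \ref{mainthm}: for $x \in \bar\fh - \fh$ the set $\mathrm{Fin}$ of Equation (\ref{findef}) defines, via Lemma \ref{fin}, a closed root subsystem $\mathrm{Fin} \cup (-\mathrm{Fin})$ of rank at most $r - 1$ (the linear relations defining $\fh$ preclude rank $r$), which is contained in some maximal element $\Phi'$ of $\mathcal S$, and then $x \in C(\Phi')$. Since each $C(\Phi')$ is already known to be an irreducible component of $\bar\fh - \fh$, every irreducible component must equal some $C(\Phi')$.

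The one thing that needs care is keeping the inductive structure straight: the isomorphism $C(\Phi') \cong \bar\fh'$ is the inductive content, supplied by Theorem \ref{mainthm} applied to $\fg'$, and it is precisely this identification that converts the rather abstract defining condition ``$(x_\lambda)_{\lambda \in (\Phi')^+} \in V(J(\Phi'))$'' into the geometrically meaningful statement that $C(\Phi')$ is another wonderful compactification of a Cartan subalgebra. Once this is in hand, the remaining work is bookkeeping on top of the arguments already carried out in the proof of Theorem \ref{mainthm}, so I do not anticipate any real obstacle.
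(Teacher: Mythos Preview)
Your proposal is correct and matches the paper's own treatment exactly: the paper states Theorem \ref{m1'} immediately after Theorem \ref{mainthm} with the single sentence ``In the course of proving Theorem \ref{mainthm}, we have proved'' it, and the ingredients you extract (the inductive identification $C(\Phi')\cong\bar\fh'$, the injectivity via the $\lambda_0\in\Phi'-\Phi''$ dimension drop, and the surjectivity via the $\mathrm{Fin}$ argument and Lemma \ref{fin}) are precisely the ones the paper uses. Your dimension bound $\dim(\bar\fh-\fh)\le r-1$ to conclude each $C(\Phi')$ is an irreducible component is a clean way to phrase what the paper leaves slightly implicit, but there is no substantive difference.
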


From now on, we view each $\fh'$ as a subset of $\bar \fh$ via the composition 
\begin{align*} 
\fh' \lhook\joinrel\longrightarrow \bar \fh' \cong C(\Phi') \lhook\joinrel\longrightarrow \bar \fh.
\end{align*}
We remark that the ideal $J(\Phi)$ that we used to compute the boundary of $\fh$, a priori, could be properly contained in $I(\fh)^h.$  However, it is proved in \cite[Theorem 1.3(a)]{AB} that these two ideals are actually equal.

\begin{definition}
    Let $\Phi$ be a root system of rank $r.$  For any $k \in [1, r]$, we inductively define a $k$-step good root subsystem of $\Phi$ to be a good root subsystem of a $(k - 1)$-step good root subsystem of $\Phi$, where a $0$-step good root subsystem is understood to be $\Phi$ itself.
\end{definition}

For each $k$-step good root subsystem $\Psi$, we define $\mathring C(\Psi)$ as in Equation (\ref{stratum}) and define $C(\Psi)$ as in Definition \ref{stratumphi}.

\begin{corollary}\label{strat}
We have $$\bar \fh = \bigsqcup \mathring C(\Psi),$$ where the union is over all $k$-step good root subsystems $\Psi$ of $\Phi$ for $k=0, \dots, r$, and the $\mathring C(\Psi)$ are the $\fh$-orbits on $\bar \fh.$  This is an affine paving of $\bar \fh.$
\end{corollary}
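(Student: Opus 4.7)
The plan is to proceed by induction on the rank $r = \dim \fh$ of $\Phi,$ using Theorem \ref{m1'} as the recursive tool. The base case $r = 0$ is immediate: $\bar \fh$ is a single point which equals $\mathring C(\Phi)$ for $\Phi = \emptyset,$ viewed as the $0$-step good root subsystem. For the inductive step, I would write $\bar \fh = \fh \sqcup (\bar \fh - \fh),$ where $\fh = \mathring C(\Phi)$ accounts for the $0$-step case. From the proof of Theorem \ref{mainthm}, every $x \in \bar \fh - \fh$ lies in some $C(\Phi') \cong \bar \fh'$ for a good root subsystem $\Phi'.$ Applying the induction hypothesis to $\bar \fh'$ partitions it into strata $\mathring C(\Psi)$ indexed by $k$-step good root subsystems $\Psi$ of $\Phi'$ with $k \ge 0$; each such $\Psi$ is then a $(k+1)$-step good root subsystem of $\Phi.$ Unpacking Equation (\ref{stratum}) shows that the stratum $\mathring C(\Psi) \subseteq C(\Phi')$ agrees with the directly-defined $\mathring C(\Psi) \subseteq \bar \fh,$ since the conditions $x_{\lambda} = \infty$ for $\lambda \in \Phi^+ - (\Phi')^+$ hold automatically on $C(\Phi').$ Disjointness is immediate: since $\rk \Psi = r - k$ is determined by the step count, different $\Psi$ have distinct positive-root sets $\Psi^+,$ and any $\lambda$ in the symmetric difference distinguishes the corresponding strata.

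For the $\fh$-orbit statement, I would use the action from Remark \ref{haction}. Since ${\rm emb}''$ is $\fh$-equivariant, the action preserves $\bar \fh,$ and the explicit formula shows that if $x_{\lambda, 0} = 0,$ then $(y \cdot x)_{\lambda, 0} = 0,$ so each stratum $\mathring C(\Psi)$ is $\fh$-stable. Writing $\fh_{\Psi}$ for the chosen Cartan associated to $\Psi$ inside $\fh,$ the identification $\mathring C(\Psi) \cong \fh_{\Psi}$ shows that the action factors through the restriction-of-roots map $\fh \twoheadrightarrow \fh_{\Psi}.$ Indeed, letting $\Psi^{\perp} := \{z \in \fh : \lambda(z) = 0 \, \forall \lambda \in \Psi\},$ one has $\fh = \fh_{\Psi} \oplus \Psi^{\perp},$ because roots of $\Psi$ separate elements of the Cartan $\fh_{\Psi}$ and a dimension count gives $\dim \Psi^{\perp} = r - \rk \Psi.$ The induced action on $\fh_{\Psi}$ is translation, which is transitive, so each $\mathring C(\Psi)$ is a single $\fh$-orbit.

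For the affine paving, each stratum $\mathring C(\Psi) \cong \fh_{\Psi} \cong \C^{r-k}$ is affine, and by Theorem \ref{m1'}(2) applied recursively, the closure $\overline{\mathring C(\Psi)} = C(\Psi) \cong \bar \fh_{\Psi}$ decomposes as a disjoint union of lower-dimensional strata from the already-proved partition. Filtering $\bar \fh$ by the descending chain $X_i := \bigcup \{\overline{\mathring C(\Psi)} : \dim \mathring C(\Psi) \le r - i\}$ therefore yields a closed filtration whose successive differences $X_i \setminus X_{i+1}$ are disjoint unions of affine spaces, which is an affine paving. The principal technical point is verifying the consistency of the recursive identification of $\mathring C(\Psi)$ when the same $\Psi$ arises inside both $C(\Phi_1')$ and $C(\Phi_2')$ for distinct good root subsystems; this reduces to the observation that a closed root subsystem of $\Phi'$ is closed in $\Phi,$ so the ``$k$-step good root subsystem'' property of $\Psi$ is intrinsic to $\Psi$ and $\mathring C(\Psi)$ is unambiguously defined as a subset of $\bar \fh$ via Equation (\ref{stratum}).
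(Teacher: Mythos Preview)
Your proposal is correct and follows essentially the same inductive strategy as the paper: decompose $\bar\fh$ as $\fh \sqcup (\bar\fh - \fh)$, use Theorem~\ref{m1'} to identify each boundary component $C(\Phi')$ with $\bar\fh'$, and apply induction. The paper's proof is considerably terser---it dispatches the $\fh$-orbit claim with the phrase ``the second assertion follows by a similar induction'' and does not spell out the affine-paving filtration or the disjointness and consistency checks---whereas you supply these details explicitly (the direct transitivity argument via $\fh = \fh_\Psi \oplus \Psi^\perp$, the filtration $X_i$, and the observation that $\mathring C(\Psi)$ depends only on $\Psi$ via Equation~(\ref{stratum})); these are welcome elaborations rather than a different route.
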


\begin{proof}
We use induction on $r$ and note that the case where $r=1$ is clear.  Note first that $\fh = \mathring C(\Phi)$ is the open stratum.  By Theorem \ref{m1'}, the irreducible components of $\bar \fh - \fh$ are the $C({\Phi^\prime})$, where $\Phi^\prime$ is a 1-step good root subsystem of $\Phi$ and each $C({\Phi'})$ is isomorphic to $\bar \fh'.$
By induction, each $C({\Phi^{\prime}})$ has a stratification by $\mathring C(\Psi)$, where $\Psi$ ranges over $k$-step good root subsystems of $\Phi'$, which are $(k+1)$-step good root subsystems of $\Phi.$  This proves the first assertion, and the second assertion follows by a similar induction.
\end{proof} 

We denote the resulting stratification of $\bar \fh$ by ${\mathcal C}.$  We are grateful to Matt Douglass for pointing out the relation between good and parabolic root subsystems.

\begin{definition}
    Let $\Psi$ be a closed root subsystem of $\Phi$.  We say that $\Psi$ is parabolic if its Dynkin diagram can be obtained from that of $\Phi$ by removing a set of nodes (and all edges adjacent to them).
\end{definition}

\begin{prop} \label{Prop:goodPara}
    The good root subsystems of $\Phi$ are precisely the maximal elements of the poset $(\{\text{proper parabolic root subsystems of } \Phi\}, \subseteq).$
\end{prop}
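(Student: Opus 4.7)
The plan is to identify both classes—good root subsystems and maximal proper parabolic subsystems—with the collection of closed root subsystems $\Phi' \subseteq \Phi$ of rank $r - 1$ satisfying $\Phi' = \Phi \cap \Span_{\mathbb{Q}}(\Phi')$.

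The central input is the classical characterization: a closed root subsystem $\Psi \subseteq \Phi$ is parabolic (in the sense of the preceding definition, i.e., $W$-conjugate to a standard parabolic) if and only if $\Psi = \Phi \cap \Span_{\mathbb{Q}}(\Psi)$. The forward implication is immediate from the definition of a standard parabolic. For the reverse, given a subspace $V$ spanned by roots, a generic point $\xi \in V^{\perp}$ has stabilizer in $W$ generated by the reflections in the walls of the unique $\Phi$-chamber whose closure contains $\xi$, and the corresponding root subsystem is precisely $\Phi \cap V$; thus $\Phi \cap V$ is a standard parabolic with respect to the simple roots given by these walls. This fact can also be extracted from the Borel--de Siebenthal-style analysis developed in the appendix.

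For the forward direction, let $\Phi'$ be a good root subsystem and set $V := \Span_{\mathbb{Q}}(\Phi')$, a hyperplane. Then $\Phi \cap V$ is a closed root subsystem of rank $r - 1$ containing $\Phi'$, so maximality in $\mathcal{S}$ forces $\Phi' = \Phi \cap V$. By the characterization, $\Phi'$ is parabolic. It is proper because $\rk \Phi' = r - 1 < r$, and it is maximal among proper parabolics because any parabolic strictly containing $\Phi'$ would have rank $r$ and hence equal $\Phi$.

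For the reverse direction, let $\Phi'$ be a maximal proper parabolic. Since a proper parabolic arises by removing at least one node from a Dynkin diagram of $\Phi$, and removing fewer nodes yields a strictly larger parabolic, maximality forces exactly one node to be removed; thus $\rk \Phi' = r - 1$ and $\Phi' \in \mathcal{S}$. Suppose $\Phi' \subseteq \Phi''$ with $\Phi'' \in \mathcal{S}$. Then both span the same hyperplane $V$, and $\Phi'' \subseteq \Phi \cap V = \Phi'$ by the characterization applied to $\Phi'$. Hence $\Phi'' = \Phi'$, so $\Phi'$ is good. The only real obstacle lies in justifying the parabolic characterization; once this is granted, the remainder is bookkeeping with spans, ranks, and inclusions.
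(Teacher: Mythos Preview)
Your proof is correct and follows essentially the same strategy as the paper: both directions hinge on the characterization of parabolic subsystems as those of the form $\Phi \cap \Span_{\mathbb Q}(\Psi)$, which the paper cites from Bourbaki [\S VI.1.7, Proposition 24] and which you instead sketch via the Weyl chamber argument. Your version is slightly more explicit in verifying that a good root subsystem is \emph{maximal} among proper parabolics and that a maximal proper parabolic has rank exactly $r-1$, points the paper leaves as evident.
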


\begin{proof}
    Let $\Psi$ be a good root subsystem.  Define $\Psi' := (\Span_{\mathbb Q} \Psi) \cap \Phi.$  It is evident that $\Psi'$ is a closed root subsystem of $\Phi$ of rank $r-1$ which contains $\Psi.$  By maximality of $\Psi,$ we must have $\Psi = \Psi'.$  By \cite[Proposition 24, $\S$VI.1.7]{Bou}, $\Psi'$ is a parabolic root subsystem of $\Phi.$  Hence, $\Psi$ is a maximal proper parabolic root subsystem.

    On the other hand, let $\Psi$ be a maximal proper parabolic root subsystem of $\Phi.$  Then, by definition, $\Psi$ is a closed root subsystem.  It is evident that the rank of $\Psi$ is $r-1.$  Since $\Psi$ contains every root in its $\mathbb Q$-span, it is maximal among all closed root subsystems of rank $r-1.$  It follows that $\Psi$ is good.
\end{proof}

\begin{corollary} \label{simcl}
Let $\{\alpha_1, \ldots, \alpha_r\}$ be a system of simple roots.  Take any $1 \le i \le r$ and remove from the Dynkin diagram of $\Phi$ the node corresponding to $\alpha_i.$  Then the root subsystem of $\Phi$ generated by the simple roots corresponding to the remaining nodes is a good root subsystem.  Conversely, up to the action of $W$, all good root subsystems can be obtained in this way.
\end{corollary}

The proof is immediate from Proposition \ref{Prop:goodPara}, since maximal proper parabolic root subsystems are precisely those given by omitting one simple root, up to $W$-conjugacy.
 
In a previous version of this paper, we were unaware of the connection with parabolic root subsystems, and gave a construction of good root subsystems using a variant of the Borel-de Siebenthal algorithm \cite{BDS}.   Proposition \ref{Prop:goodPara} makes this construction unnecessary, so we have omitted it.  We thank Matt Douglass and the referee for their help with this issue.

\subsection{Normality of \texorpdfstring{$\bar \fh$}{hbar}}

We now proceed to show $\bar \fh$ is regular in codimension one.  It is evident that every point of $\fh$ is regular.  Let $x = (x_{\lambda})_{\lambda \in \Phi^+} \in (\mathbb P^1)^d.$  We consider affine charts of  $(\mathbb P^1)^d$ containing $x$ of the form  $\prod \limits_{\lambda \in \Phi^+} U_{\lambda}$, where for each $\lambda \in \Phi^+$, $U_{\lambda}$ is either $\mathbb C$ or $\mathbb P^1 - \{0\}$ and $x_{\lambda} \in U_{\lambda}.$  Given an affine chart $U$ containing $x = (x_{\lambda})_{\lambda \in \Phi^+}$, for $\lambda \in \Phi^+$, we let $z_{\lambda}$ be the inhomogeneous coordinate on $U_{\lambda}.$  Namely, we define
\begin{align*}
    z_{\lambda} :=
    \begin{cases}
        x_{\lambda,1}/x_{\lambda,0} & ~ \text{if} ~ U_{\lambda} = \mathbb C \\
        x_{\lambda,0}/x_{\lambda,1} & ~ \text{if} ~ U_{\lambda} = \mathbb P^1 - \{0\}.
    \end{cases}
\end{align*}
Let $R_U := \mathbb C[z_{\lambda}: \lambda \in \Phi^+]$ be the coordinate ring of $U$ and $\mathfrak p \lhd R_U$ the ideal of polynomials in $R_U$ that vanish on $\bar \fh \cap U.$  Since $\fh$ is irreducible, so is $\bar \fh.$  It follows that $\bar \fh \cap U$ is irreducible and, hence, equidimensional.  Therefore, we can apply the Jacobian criterion to prove regularity of $x.$  Concretely, to prove that $x$ is regular, we need to find $d - r$ polynomials in $\mathfrak p$ such that their differentials evaluated at $x$ are linearly independent.

\begin{theorem} \label{m3'}
$\ $
\begin{enumerate}
\itemsep 0em
\item Every point of the set $$\fh \cup (\bigcup \limits_{\substack {\Phi' ~ \text{is a good} \\ \text{root subsystem}}} \fh')$$ is a regular point of $\bar \fh.$ 
\item  $\bar \fh$ is regular in codimension one.
\end{enumerate}
\end{theorem}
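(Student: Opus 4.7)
The plan is to apply the Jacobian criterion. At any point $x \in \fh$, working in the affine chart $\C^d \subset (\mathbb{P}^1)^d$ with coordinates $z_\lambda = x_{\lambda,1}/x_{\lambda,0}$, the variety $\bar\fh$ coincides locally with the linear subspace $\fh \subset \C^d$, which is already cut out by $d - r$ linearly independent linear equations, so regularity at such $x$ is immediate. The substantive case is a point $x \in \fh' \hookrightarrow C(\Phi')$ for a good root subsystem $\Phi'$. I would work in the affine chart $U$ with inhomogeneous coordinates $z_\lambda = x_{\lambda,1}/x_{\lambda,0}$ for $\lambda \in (\Phi')^+$ and $w_\lambda = x_{\lambda,0}/x_{\lambda,1}$ for $\lambda \in \Phi^+ \setminus (\Phi')^+$, so that $z_\lambda(x) = \lambda(h)$ for the corresponding $h \in \fh'$ and $w_\lambda(x) = 0$. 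The goal is to exhibit $d - r$ elements of $J(\Phi)$ whose dehomogenizations in $U$ have linearly independent differentials at $x$.

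I would construct these from linear polynomials in $R$ vanishing on $\fh$, organized by the Type I / Type II dichotomy of Lemma \ref{goodsupport}. The Type I forms (support in $(\Phi')^+$) make up a subspace of dimension $k := |(\Phi')^+| - r + 1$, because the $\lambda \in (\Phi')^+$ span an $(r-1)$-dimensional subspace of $\fh^*$; fix any basis $Q_1, \dots, Q_k$. In $U$ each $Q_i$ dehomogenizes to its obvious linear expression in the $z_\mu$, so the differentials $dQ_i|_x$ are linearly independent in $\Span(dz_\mu : \mu \in (\Phi')^+)$. For the remaining directions I would invoke the proposition just above this theorem identifying good root subsystems with maximal parabolic root subsystems, which yields $\Span_{\mathbb{Q}}(\Phi') \cap \Phi = \Phi'$. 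Fixing $\lambda_0 \in \Phi^+ \setminus (\Phi')^+$ and a $\mathbb{Q}$-basis $\{\lambda_1, \dots, \lambda_{r-1}\} \subset (\Phi')^+$ of $\Span_{\mathbb{Q}}(\Phi')$, every other $\lambda \in \Phi^+ \setminus (\Phi')^+$ writes uniquely as $\lambda = q_\lambda \lambda_0 + \sum_i s_i^\lambda \lambda_i$ with $q_\lambda \neq 0$, which produces the Type II form
\[
P_\lambda := x_\lambda - q_\lambda x_{\lambda_0} - \sum_i s_i^\lambda x_{\lambda_i}
\]
vanishing on $\fh$ with $\text{supp}(P_\lambda) \setminus \Phi' = \{\lambda, \lambda_0\}$.

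The key computation is that multi-homogenizing and then dehomogenizing in $U$ gives
\[
P_\lambda^h|_U = w_{\lambda_0} - q_\lambda w_\lambda - \sum_i s_i^\lambda z_{\lambda_i} w_\lambda w_{\lambda_0};
\]
the summation vanishes to second order at $x$ since $w_\mu(x) = 0$, and the product rule yields $dP_\lambda^h|_x = dw_{\lambda_0} - q_\lambda\, dw_\lambda$. Writing $m := |\Phi^+ \setminus (\Phi')^+|$, the resulting $m-1$ differentials form a coefficient matrix of shape $[\mathbf{1} \mid -\text{diag}(q_\lambda)]$ in the basis $\{dw_\mu\}$, which has full rank $m-1$ precisely because every $q_\lambda \neq 0$. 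Together with the $k$ Type I differentials, which live in the complementary subspace $\Span(dz_\mu)$, this gives $k + (m-1) = d - r$ linearly independent differentials at $x$, and the Jacobian criterion establishes regularity.

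Part (2) is then immediate from Corollary \ref{strat}: the complement $\bar\fh \setminus (\fh \cup \bigcup_{\Phi'} \fh')$ is the union of the strata $\mathring C(\Psi)$ for $k$-step good root subsystems with $k \geq 2$, each of dimension at most $r - 2$, so the singular locus of $\bar\fh$ has codimension at least $2$. The main obstacle I anticipate is guaranteeing $q_\lambda \neq 0$; this rests essentially on the parabolic characterization of good root subsystems, without which the Type II block could drop in rank and the Jacobian criterion would fail at a generic point of $\fh'$.
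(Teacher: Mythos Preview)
Your proof is correct and follows essentially the same route as the paper's: work in the affine chart around $x \in \fh'$, exhibit $|(\Phi')^+| - (r-1)$ Type I relations supported in $(\Phi')^+$ together with $|\Phi^+ \setminus (\Phi')^+| - 1$ Type II relations whose dehomogenizations have the form $w_{\lambda_0} - q_\lambda w_\lambda - (\text{higher order})$, and check that the resulting $d-r$ differentials at $x$ are independent using $q_\lambda \neq 0$. The only cosmetic difference is that the paper derives $q_\lambda \neq 0$ directly from the maximality of $\Phi'$ in $\mathcal{S}$ rather than invoking the parabolic characterization, but this is the same argument.
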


\begin{proof}
Fix a good root subsystem $\Phi'$ and a point $x = (x_{\lambda})_{\lambda \in \Phi^+}$ of $\fh'.$  Retain the notation above.

Choose linearly independent positive roots $\lambda_1, \ldots, \lambda_{r-1}$ of $\Phi'.$  Then for each positive root $\lambda \in \Phi'$, we have a unique expression $$\lambda = \sum\limits_{i=1}^{r-1} a_i \lambda_i,$$ where each $a_i$ is a rational number.  Then $$z_{\lambda} - \sum\limits_{i=1}^{r-1} a_i z_{\lambda_i}$$ is an element of $\mathfrak p.$  Such a polynomial is said to be a type (i) polynomial.  The number of polynomials of type (i) is $|(\Phi')^+| - (r - 1)$, where $|(\Phi')^+|$ stands for the cardinality of $(\Phi')^+.$

Since $\rk \Phi' = r - 1 < \rk \Phi$, there exists $\lambda_0 \in \Phi^+ - \Phi'.$  Suppose that $\lambda_0 \in \text{Span}_{\mathbb Q} (\Phi').$  Then the root subsystem of $\Phi$ consisting of those elements of $\Phi$ which can be expressed as an integral linear combination of the elements of $\Phi'$ and $\lambda_0$ is clearly closed, of rank $r - 1$ and properly contains $\Phi'.$  This contradicts the assumption that $\Phi'$ is good.  Hence the root $\lambda_0 \notin \text{Span}_{\mathbb Q} (\Phi').$  It follows that $\text{Span}_{\mathbb Q} (\Phi) = \text{Span}_{\mathbb Q} (\Phi', \lambda_0).$  Now, for each $\lambda \in \Phi^+ - \Phi'$ with $\lambda \neq \lambda_0$, we have a unique expression $$\lambda = d_0 \lambda_0 + \sum\limits_{i=1}^{r-1} d_i \lambda_i,$$ where each $d_i$ is a rational number and $d_0 \neq 0.$  Therefore, the polynomial $$z_{\lambda_0} - d_0 z_{\lambda} - z_{\lambda_0} z_{\lambda} \sum\limits_{i=1}^{r-1} d_i z_{\lambda_i}$$ is an element of $\mathfrak p.$  Such a polynomial is said to be a type (ii) polynomial.  The number of polynomials of type (ii) is $d - |(\Phi')^+| - 1.$

All together, we have defined $$\bigl( |(\Phi')^+| - (r - 1) \bigr) + (d - |(\Phi')^+| - 1) = d - r$$ polynomials of $\mathfrak p$ (of types (i) and (ii)).  To show that $x$ is a regular point of $\bar \fh$, it suffices to show that the differentials of these polynomials, evaluated at $x$, are linearly independent.  We regard the differential of a polynomial as a column vector in the following manner.  For each $\lambda \in \Phi^+$, the entry of this column vector in the row corresponding to $\lambda$ is the partial derivative of the polynomial with respect to $z_{\lambda}.$  For a polynomial of type (i), its differential has the feature that all the nonzero entries are in the rows corresponding to those $\lambda$'s with $\lambda \in (\Phi')^+.$  Moreover, each such differential contains a unique nonzero entry in the rows corresponding to those $\lambda$'s with $\lambda \in (\Phi')^+ - \{\lambda_1, \ldots, \lambda_{r-1}\}$, and these nonzero entries, as the polynomial varies within the set of polynomials of type (i), are in different rows.  For the polynomial $$z_{\lambda_0} - d_0 z_{\lambda} - z_{\lambda_0} z_{\lambda} \sum\limits_{i=1}^{r-1} d_i z_{\lambda_i}$$ of type (ii), since when evaluating at $x$, we set the variables $z_{\lambda_0}, z_{\lambda}$ to zero, the only nonzero entries of the evaluation of its differential at $x$ are in the rows corresponding to $\lambda_0$ and $\lambda.$  Moreover, these nonzero entries are $1$ in the row corresponding to $\lambda_0$ and $-d_0 (\neq 0)$ in the row corresponding to $\lambda.$  Recall that $\lambda_0$ and $\lambda$ are elements of $\Phi^+ - \Phi'.$  Now it is clear that the differentials of the chosen polynomials (of types (i) and (ii)), evaluated at $x$, are linearly independent.  This proves the first assertion.

By Theorem \ref{m1'}, for each good root subsystem $\Phi'$, the complement of $\fh'$ in $C(\Phi')$ has dimension $r - 1 - 1 = r - 2.$  Again by Theorem \ref{m1'}, the complement of $$\fh \cup (\bigcup \limits_{\substack {\Phi' ~ \text{is a good} \\ \text{root subsystem}}} \fh')$$ in $\bar \fh$ is contained in $$\bigcup \limits_{\substack {\Phi' ~ \text{is a good} \\ \text{root subsystem}}} (C(\Phi') - \fh').$$  Hence, the singular locus of $\bar \fh$ has codimension at least two.  This proves the second assertion.
\end{proof}

To prove that $\bar \fh$ is normal, we use the following results.  Recall that $S$ is the multi-homogeneous coordinate ring of $(\mathbb P^1)^d.$

\begin{theorem} \cite[Theorem 1.1]{AB} \label{CM}
The ring $S/I(\fh)^h$ is Cohen-Macaulay.
\end{theorem}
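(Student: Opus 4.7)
The plan is to use a Gröbner degeneration to reduce the problem to showing that an explicit Stanley--Reisner ring is Cohen--Macaulay. By Proposition \ref{ambient}, $\bar \fh$ is the closure in $(\mathbb P^1)^d$ of the linear subspace $\fh \hookrightarrow \mathbb C^d$, so $I(\fh)^h$ is the multi-homogenization of the linear ideal $I(\fh) \lhd R.$ First I would fix a total order on $\Phi^+$ and choose a term order on $S$ so that each $x_{\lambda,1}$ dominates every $x_{\mu,0}$, refined lexicographically by the chosen order. The plan is to prove that the initial ideal $\mathrm{in}_<(I(\fh)^h)$ is a squarefree monomial ideal and that its Stanley--Reisner complex is shellable; since a flat Gröbner degeneration preserves Cohen--Macaulayness of the quotient (if $\mathrm{in}_<(I)$ is Cohen--Macaulay then so is $S/I$), this would suffice.

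For the initial ideal, the natural generators come from the matroid $M$ associated to the linear forms $\{\lambda : \lambda \in \Phi^+\}$ on $\fh.$ For each circuit $C$ of $M$ there is a (unique up to scalar) linear relation $\sum_{\lambda \in C} c_\lambda x_\lambda \in I(\fh)$ with all $c_\lambda \neq 0,$ whose multi-homogenization is
\[
f_C \ := \ \sum_{\lambda \in C} c_\lambda\, x_{\lambda,1} \prod_{\mu \in C \setminus \{\lambda\}} x_{\mu,0}.
\]
Under the chosen order, the leading term of $f_C$ is $x_{\lambda^*,1} \prod_{\mu \in C \setminus \{\lambda^*\}} x_{\mu,0}$, where $\lambda^* = \max C.$ The key technical claim is that the collection $\{f_C : C \text{ a circuit of } M\}$ is a Gröbner basis for $I(\fh)^h$; I would prove this by reducing S-pairs $S(f_C, f_{C'})$ to zero using the circuit elimination axiom for $M$ to build the required syzygies. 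Granting this, $\mathrm{in}_<(I(\fh)^h)$ is the squarefree monomial ideal generated by the ``broken-circuit'' leading terms above.

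The final task is to identify the Stanley--Reisner complex $\Delta$ of $\mathrm{in}_<(I(\fh)^h)$ with a Cohen--Macaulay complex. The plan is to show that $\Delta$ is the independence complex of a matroid built from $M$ (after pairing each coordinate $x_{\lambda,0}$ with a ``deletion'' element and each $x_{\lambda,1}$ with $\lambda$), so that Cohen--Macaulayness follows from Provan--Billera's theorem that matroid independence complexes are pure and shellable, or equivalently from Björner's shellability of the broken-circuit complex. Transferring back through the Gröbner degeneration then yields the Cohen--Macaulayness of $S/I(\fh)^h.$ I expect the main obstacle to be the Gröbner basis claim: although the matroid of $\fh \hookrightarrow \mathbb C^d$ is combinatorially very transparent in terms of the root system $\Phi,$ the S-pair reductions for the multi-homogenized circuit polynomials are genuinely more delicate than in the affine setting and require a careful use of the circuit exchange property together with the doubling of coordinates $\{x_{\lambda,0}, x_{\lambda,1}\}$ coming from the embedding into $(\mathbb P^1)^d.$
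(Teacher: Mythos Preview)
The paper does not give its own proof of this theorem; it simply cites it from \cite{AB} (Ardila--Boocher) and uses it as a black box in the proof of normality. Your proposed strategy---Gr\"obner-degenerate using the multihomogenized circuit relations $f_C$, show these form a Gr\"obner basis so that the initial ideal is squarefree monomial, and then verify that the associated Stanley--Reisner complex is shellable---is essentially the method of \cite{AB} itself. In that paper the circuit polynomials are shown to form a \emph{universal} Gr\"obner basis for $I(\fh)^h$, and the initial complexes are shown to be vertex-decomposable (hence shellable, hence Cohen--Macaulay). So your outline is on the right track and would reproduce the cited result rather than offer an alternative route.

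Two small cautions about the details as you wrote them. First, the Gr\"obner basis claim is indeed the technical heart; the S-pair reductions in \cite{AB} are handled not by ad hoc syzygy computations but by a structural argument identifying all initial ideals simultaneously, and you may find it cleaner to follow that line than to chase individual S-pairs. Second, the initial complex is not literally the independence complex of a matroid on the doubled ground set $\{x_{\lambda,0},x_{\lambda,1}\}$; in \cite{AB} it is described in terms of matroid activities (a broken-circuit/external-activity type complex). Your invocation of Provan--Billera would need to be replaced by the specific shellability argument for that complex. Neither point is a genuine obstruction, but the identification you sketched in the last paragraph is not quite the correct one.
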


\begin{theorem} \cite[Theorem 2.1]{BK} \label{tensor}
Let $A, B$ be $\mathbb C$-algebras such that $A \otimes_{\mathbb C} B$ is Noetherian.  Then $A \otimes_{\mathbb C} B$ is Cohen-Macaulay if and only if so are $A$ and $B.$
\end{theorem}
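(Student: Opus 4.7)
The plan is to reduce to the local setting and apply the classical flat base change theorem for Cohen-Macaulay rings. Recall that Cohen-Macaulayness is a local property: $R$ is CM if and only if $R_{\mathfrak P}$ is CM for every prime $\mathfrak P$. The main tool I would invoke is the standard fact that if $(R, \mathfrak m) \to (S, \mathfrak n)$ is a flat local homomorphism of Noetherian local rings, then $S$ is Cohen-Macaulay if and only if both $R$ and the closed fiber $S/\mathfrak m S$ are Cohen-Macaulay. I would apply this to the map $A_{\mathfrak p} \to (A \otimes_{\mathbb C} B)_{\mathfrak P}$, where $\mathfrak P$ is a prime of $A \otimes_{\mathbb C} B$ and $\mathfrak p := \mathfrak P \cap A$; flatness here is automatic because $B$ is $\mathbb C$-flat (as $\mathbb C$ is a field) and localization preserves flatness.

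For the ($\Leftarrow$) direction I would proceed in two stages. The closed fiber of $A_{\mathfrak p} \to (A \otimes_{\mathbb C} B)_{\mathfrak P}$ is a localization of $\kappa(\mathfrak p) \otimes_{\mathbb C} B$. To see this fiber is Cohen-Macaulay whenever $B$ is, I would apply the fiber criterion again, now to the flat map $B \to \kappa(\mathfrak p) \otimes_{\mathbb C} B$: at a prime $\mathfrak q'$ of $B$, the corresponding fiber is $\kappa(\mathfrak p) \otimes_{\mathbb C} \kappa(\mathfrak q')$, a tensor product of two field extensions of $\mathbb C$, which is zero-dimensional and hence trivially CM. Combining the two applications of the fiber criterion yields that $(A \otimes_{\mathbb C} B)_{\mathfrak P}$ is CM whenever $A_{\mathfrak p}$ and $B_{\mathfrak q}$ (with $\mathfrak q := \mathfrak P \cap B$) are CM. For the ($\Rightarrow$) direction I would use faithful flatness of $A \to A \otimes_{\mathbb C} B$ (valid once $B \neq 0$): the fiber criterion runs in reverse under a faithfully flat local map, so CM-ness of $A \otimes_{\mathbb C} B$ forces CM-ness of $A$; a symmetric argument handles $B$.

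The main obstacle will be controlling the Noetherian-ness of the intermediate rings, especially the fibers $\kappa(\mathfrak p) \otimes_{\mathbb C} B$, since a tensor product of two Noetherian $\mathbb C$-algebras is not in general Noetherian. The hypothesis that $A \otimes_{\mathbb C} B$ is Noetherian must be leveraged carefully, via the faithful flatness of $A \to A \otimes_{\mathbb C} B$ and base change, to guarantee that each localization and fiber appearing in the argument is itself Noetherian, so that the flat base change theorem can be invoked at each step.
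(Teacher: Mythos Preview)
The paper does not prove this theorem; it is stated with a citation to Bouchiba--Kabbaj \cite{BK} and invoked as a black box in the proof that $\bar\fh$ is normal. There is therefore no proof in the paper to compare your proposal against.

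That said, your sketch contains a genuine gap. You assert that the fiber $\kappa(\mathfrak p) \otimes_{\mathbb C} \kappa(\mathfrak q')$ is ``zero-dimensional and hence trivially CM,'' but a tensor product of two field extensions of $\mathbb C$ need not be zero-dimensional. For instance, $\mathbb C(x) \otimes_{\mathbb C} \mathbb C(y)$ is a one-dimensional domain: it is the localization of $\mathbb C[x,y]$ obtained by inverting all nonzero polynomials in $x$ alone and all nonzero polynomials in $y$ alone, and the height-one prime $(x-y)$ survives this localization. In general, for field extensions $K, L$ of a field $k$ with $K \otimes_k L$ Noetherian, one has $\dim(K \otimes_k L) = \min(\mathrm{tr.deg}_k K, \mathrm{tr.deg}_k L)$. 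The overall architecture you outline---localize, apply the flat fiber criterion twice, and reduce to a statement about tensor products of residue fields---is indeed the standard route, but establishing that these field-tensor-field rings are Cohen--Macaulay (and controlling Noetherianity along the way) is not automatic and is essentially the substance of the cited paper \cite{BK}.
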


\begin{theorem} \label{normal}
The variety $\bar \fh$ is normal.
\end{theorem}

\begin{proof}
Let $${\rm Cn} := \text{Spec} (S/I(\fh)^h) \subseteq (\mathbb C^2)^d$$ be the multi-affine cone of $\bar \fh.$  Let $$\text{pr}: {\rm Cn} \cap (\mathbb C^2 - \{0\})^d \rightarrow \bar \fh$$ be the natural projection map.  Zariski locally on $\bar \fh$, the variety ${\rm Cn} \cap (\mathbb C^2 - \{0\})^d$ is the fiber product of $\bar \fh$ with $(\mathbb C^{\times})^d.$  By Theorem \ref{CM}, ${\rm Cn}$ is Cohen-Macaulay.  Hence the open subset ${\rm Cn} \cap (\mathbb C^2 - \{0\})^d$ is also Cohen-Macaulay.  Then, by Theorem \ref{tensor}, $\bar \fh$ is Cohen-Macaulay.  It follows that $\bar \fh$ satisfies Serre's condition (S2).  Thus $\bar \fh$ is a normal variety by Theorem \ref{m3'} and Serre's criterion for normality.
\end{proof}

\begin{rmk}
A version of the above argument implies that matroid Schubert varieties are normal.
\end{rmk}

\subsection{Weyl Group Action on \texorpdfstring{$\bar \fh$}{hbar}}

Let $W$ be the Weyl group for $(\fg, \fh).$  The group $W$ acts on $(\mathbb P^1)^{\Phi}$ by 
\begin{align}\label{Waction}
w \cdot (x_{\lambda})_{\lambda \in \Phi} := (x_{w^{-1}(\lambda)})_{\lambda \in \Phi}.
\end{align}
With this $W$-action, the morphism $\fh \rightarrow (\mathbb P^1)^{\Phi}$ sending $h \in \fh$ to the point whose $\lambda$-component is $\lambda(h)$ is $W$-equivariant. 
From this it follows that $\bar \fh$ is a $W$-stable subset of $(\mathbb P^1)^{\Phi}.$  Hence we have proved

\begin{prop}
The usual $W$-action on $\fh$ can be extended to a $W$-action on $\bar \fh.$
\end{prop}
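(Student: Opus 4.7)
The plan is to substantiate two assertions implicit in the paper's remark preceding the proposition: (a) the morphism $\iota:\fh\hookrightarrow(\mathbb P^1)^\Phi$, $h\mapsto(\lambda(h))_{\lambda\in\Phi}$, is $W$-equivariant with respect to the action (\ref{Waction}), and (b) the Zariski closure $\overline{\iota(\fh)}\subseteq(\mathbb P^1)^\Phi$ is canonically isomorphic to $\bar\fh$ as constructed via Proposition \ref{ambient}.

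For (a), I would simply unwind the definitions: for any $w\in W$, $h\in\fh$, and $\lambda\in\Phi$, the duality identity $\lambda(wh)=(w^{-1}\lambda)(h)$ is precisely the statement $\iota(wh)=w\cdot\iota(h)$. Thus $\iota(\fh)$ is a $W$-stable subset of $(\mathbb P^1)^\Phi$; since $W$ acts on $(\mathbb P^1)^\Phi$ by automorphisms of varieties, the Zariski closure $\overline{\iota(\fh)}$ is also $W$-stable and inherits an algebraic $W$-action.

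For (b), I would introduce the morphism $\sigma:(\mathbb P^1)^{\Phi^+}\hookrightarrow(\mathbb P^1)^\Phi$ whose $\lambda$-coordinate is the identity for $\lambda\in\Phi^+$ and whose $(-\lambda)$-coordinate is the involution $[a{:}b]\mapsto[a{:}-b]$ on $\mathbb P^1$. Checking in local coordinates, as in the passage preceding Proposition \ref{ambient}, shows that $\sigma$ is a closed embedding; and by construction $\iota=\sigma\circ{\rm emb}''$, so $\sigma$ identifies $\bar\fh=\overline{{\rm emb}''(\fh)}$ with $\overline{\iota(\fh)}$. Transporting the $W$-action from (a) across this identification yields a $W$-action on $\bar\fh$ which plainly extends the translation action on $\fh$.

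The principal subtlety is the compatibility in (b): because $W$ does not preserve $\Phi^+$, one cannot directly define a $W$-action on the $(\mathbb P^1)^{\Phi^+}$ realization. The sign twist built into $\sigma$ intertwines correctly with the identity $w(-\lambda)=-w(\lambda)$, and this is precisely why the image $\sigma((\mathbb P^1)^{\Phi^+})$ is $W$-stable inside $(\mathbb P^1)^\Phi$ and why the transport of the action along $\sigma$ is well-defined.
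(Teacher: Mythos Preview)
Your approach is essentially the same as the paper's: both define the $W$-action on $(\mathbb P^1)^\Phi$ by coordinate permutation, observe that the embedding $\fh\to(\mathbb P^1)^\Phi$ is $W$-equivariant, and conclude that the closure is $W$-stable. The paper's argument consists entirely of the short paragraph preceding the proposition and does not spell out the identification of $\overline{\iota(\fh)}\subseteq(\mathbb P^1)^\Phi$ with the $\bar\fh$ of Proposition~\ref{ambient}; your step~(b) supplying the closed embedding $\sigma:(\mathbb P^1)^{\Phi^+}\hookrightarrow(\mathbb P^1)^\Phi$ and verifying $\iota=\sigma\circ{\rm emb}''$ makes explicit what the paper leaves implicit, and your remark that $\sigma((\mathbb P^1)^{\Phi^+})$ is $W$-stable (because the condition $x_{-\lambda}=-x_\lambda$ is symmetric in $\lambda$ and $-\lambda$) is the correct justification for transporting the action back to $(\mathbb P^1)^{\Phi^+}$.
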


Let $(\mathbb P^1 - \{0\})^d$ be the affine chart of $(\mathbb P^1)^d$ consisting of points all of whose components are nonzero.  The intersection $\bar \fh \cap (\mathbb P^1 - \{0\})^d$ is exactly the reciprocal variety of $\fh$ studied in \cite{PS}, and is an affine open neighborhood of the most singular point $(\infty, \ldots, \infty)$ of $\bar \fh$, as is evident in the following example.

\begin{example} \label{ex:A2}
Let $\fg = \mathfrak {sl}_3.$  We have $$\Phi^+ = \{\alpha, \beta, \alpha + \beta\}.$$  So we have $${\rm emb}'': \fh \lhook\joinrel\longrightarrow (\mathbb P^1)^3, ~ h \longmapsto \bigl( \alpha(h), \beta(h), \alpha(h) + \beta(h) \bigr).$$  If $[x_0: x_1], [y_0: y_1], [z_0: z_1]$ are coordinates on $(\mathbb P^1)^3$, then it is not hard to see that $$I(\fh)^h = \langle x_1y_0z_0 + x_0y_1z_0 - x_0y_0z_1 \rangle.$$  Using this, and computing in affine coordinate charts of $(\mathbb P^1)^3$, one sees that $\bar \fh - \fh$ is the union $$(\mathbb P^1 \times \{\infty\} \times \{\infty\}) \cup (\{\infty\} \times \mathbb P^1 \times \{\infty\}) \cup (\{\infty\} \times \{\infty\} \times \mathbb P^1)$$ of three copies of $\mathbb P^1.$  Recall that the wonderful compactification of a Cartan subalgebra of $\mathfrak {sl}_2$ is isomorphic to $\mathbb P^1.$  The three copies of $\mathbb P^1$ above correspond exactly to the three good root subsystems $$\{\pm \alpha\}, \{\pm \beta\}, ~ \text{and} ~ \{\pm (\alpha + \beta)\}$$ of $\Phi$, each of which is of type $A_1.$  It follows from Theorem \ref{m1'} that the union of the codimension one strata is $$(\mathbb C \times \{\infty\} \times \{\infty\}) \cup (\{\infty\} \times \mathbb C \times \{\infty\}) \cup (\{\infty\} \times \{\infty\} \times \mathbb C).$$  The three copies of $\mathbb P^1$ intersect at the point $(\infty, \infty, \infty)$, which is the complement of the union $$\fh \cup (\bigcup \limits_{\substack {\Phi' ~ \text{is a good} \\ \text{root subsystem}}} \fh')$$ in $\bar \fh.$  In local coordinates $(X_0,Y_0,Z_0)$ around $(\infty, \infty, \infty)$, where $X_0 := x_0/x_1$ and similarly for $Y_0$ and $Z_0$, one checks easily that $\bar \fh$ is the vanishing locus of $Y_0Z_0 + X_0Z_0 - X_0Y_0$, so that
$\bar \fh$ is not regular and $(\infty, \infty, \infty)$ is the only singular point of $\bar \fh.$  It is also not hard to see that the boundary $\bar \fh - \fh$ of $\fh$ is \textbf{not} a divisor of normal crossings.

The Weyl group in this case is $S_3.$  The $S_3$-action on $\bar \fh$ is the usual action on $\fh$, permutes the three copies of $\mathbb P^1$ and fixes the point $(\infty, \infty, \infty).$

Let $\fg = \mathfrak{sl}_4.$  Similar computations as above using Macaulay2 show that $\bar \fh$ is \textbf{not} a local complete intersection.

For a general $\fg$, let $(\infty, \ldots, \infty)$ be the point of $(\mathbb P^1)^d$ all of whose components are $\infty.$  One can show that $(\infty, \ldots, \infty)$ is contained in every irreducible component of $\bar \fh - \fh.$
\end{example}


\section{Cohomology of \texorpdfstring{$\bar{\fh}$}{hbar} and the Coxeter Arrangement}

In this section, we establish a bijection between the poset given by the strata of $\bar \fh$ with the intersection lattice of the Coxeter arrangement defined by the root system $\Phi.$  As a consequence, we deduce the Betti numbers of $\bar \fh$ from known results in \cite{OT} in the classical cases, and from results in \cite{OS1} or alternatively SageMath computations in the exceptional cases.  
   
Recall the stratification $\mathcal C$ of $\bar \fh = \sqcup \ \mathring C(\Psi)$ from Corollary \ref{strat}, and note the following easy consequence of this stratification. 
For $\Psi$ a $k$-step good root subsystem of $\Phi$, we consider the corresponding homology fundamental class $\Fund_{\Psi} \in H_{2(r-k)}(\bar \fh, \mathbb Z)$ determined by the $(r-k)$-dimensional subvariety $C(\Psi)$, and the corresponding cohomology class $\xi_{\Psi} \in H^{2(r-k)}(\bar \fh, \mathbb Z)$ defined by the property that 
$\langle \xi_{\Psi}, \Fund_{\Theta} \rangle = \delta_{\Psi, \Theta}$ for each $k$-step good root subsystem $\Theta$ of $\Phi.$

\begin{corollary} \label{homologyhbar}
$$
H_{2(r-k)}(\bar \fh, \mathbb Z)=\bigoplus \Z\cdot \Fund_{\Psi} \quad \text{and} \quad H^{2(r-k)}(\bar \fh, \mathbb Z)=\bigoplus \Z \cdot \xi_{\Psi},$$
where each sum is over $k$-step good root subsystems $\Psi$ of $\Phi.$
\end{corollary}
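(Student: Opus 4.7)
The plan is to deduce this directly from the affine paving $\bar \fh = \bigsqcup \mathring C(\Psi)$ established in Corollary \ref{strat}, applying standard facts about cellular decompositions of projective complex varieties. Recall that a $k$-step good root subsystem $\Psi$ has rank $r-k$, and the stratum $\mathring C(\Psi) \cong \mathbb C^{r-k}$ is an affine cell whose closure $C(\Psi)$ is an $(r-k)$-dimensional irreducible closed subvariety, with boundary $C(\Psi) - \mathring C(\Psi)$ a union of strata of strictly smaller dimension (namely, those corresponding to $k'$-step good root subsystems for $k' > k$).

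First I would organize the stratification into a dimension filtration: let $X_i$ be the union of all stratum closures $C(\Psi)$ with $\dim C(\Psi) \le i$. This is closed in $\bar \fh$ because the boundary of each $C(\Psi)$ lies in the lower-dimensional strata, and $X_i - X_{i-1}$ is a disjoint union of affine cells $\mathring C(\Psi) \cong \mathbb C^i$, indexed by the $(r-i)$-step good root subsystems of $\Phi$. Since $\bar \fh$ is projective as a closed subvariety of $(\mathbb P^1)^d$, this is an affine paving in the classical sense.

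Next I would apply the standard homological computation for affine pavings: by induction on $i$, using the long exact sequence of the pair $(X_i, X_{i-1})$, excision, and the fact that $H_*(\mathbb C^i, \mathbb C^i - \{0\}; \Z)$ is $\Z$ concentrated in degree $2i$, one obtains that $H_*(\bar \fh, \Z)$ is free abelian, concentrated in even degrees, with basis the fundamental classes $\Fund_\Psi$ of the stratum closures; all connecting homomorphisms vanish for parity reasons. This gives the homology statement. The cohomology statement is then immediate from the universal coefficient theorem: because $H_*(\bar \fh, \Z)$ is free abelian, $H^*(\bar \fh, \Z) \cong \mathrm{Hom}(H_*(\bar \fh, \Z), \Z)$, and the classes $\xi_\Psi$ specified by the duality relation $\langle \xi_\Psi, \Fund_\Theta \rangle = \delta_{\Psi, \Theta}$ form the claimed basis. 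There is no real obstacle beyond invoking Corollary \ref{strat}: the remainder is a formal consequence of classical cellular homology for affine pavings of compact complex varieties.
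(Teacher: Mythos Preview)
Your proposal is correct and is essentially the same as the paper's approach: the paper states this corollary as an ``easy consequence'' of the affine paving in Corollary~\ref{strat} without spelling out the details, and what you have written is precisely the standard cellular-homology argument that justifies such a statement.
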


Thus, in order to determine the Betti numbers of $\bar \fh$, we need to count the number of strata in $\mathcal C$ in each dimension.  After a previous version of this paper was posted on the arXiv, we learned from Matt Douglass that, based on results proved in that version of this paper, the counting can be deduced from work of Orlik and Terao \cite[$\S$6.4 and Appendix C]{OT}.  

\begin{definition}
    For $0 \le k \le r = \dim \fh$, we write $$f(\Phi, k)$$ for the number of codimension $k$ strata in $\mathcal C.$

    When the root system $\Phi$ is of type ${\rm Tp} (= A_r, B_r, C_r, D_r, E_6, E_7, E_8, F_4, G_2)$, we also write $f({\rm Tp},k)$ for $f(\Phi,k).$
\end{definition}

\subsection{\texorpdfstring{$\bar{\fh}$}{hbar} and the Coxeter Arrangement}



We start by giving another characterization of the $k$-step good root subsystems.

\begin{lemma} \label{k-step}
    Let $\Phi$ be a root system of rank $r$ and $k \in [0, r].$  Then the $k$-step good root subsystems of $\Phi$ are exactly the maximal (with respect to inclusion) closed root subsystems of $\Phi$ of rank $r - k.$
\end{lemma}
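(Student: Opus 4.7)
The plan is to prove Lemma \ref{k-step} by induction on $k$, with the base case $k=0$ being immediate since the only closed root subsystem of $\Phi$ of rank $r$ that contains $\Phi$ is $\Phi$ itself, and $\Phi$ is by convention the unique $0$-step good root subsystem.

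The central tool I will set up first is the following \emph{parabolic observation}: a closed root subsystem $\Psi$ of $\Phi$ of rank $s$ is a maximal closed root subsystem of $\Phi$ of rank $s$ if and only if it is parabolic, in the sense that $\Psi = \Span_{\mathbb Q}(\Psi) \cap \Phi$. One direction is essentially repeated from the proof of the Proposition just preceding the lemma: if $\Psi$ is maximal of rank $s$, then $\Psi' := \Span_{\mathbb Q}(\Psi) \cap \Phi$ is closed of rank $s$ (by \cite[Proposition 24, $\S$VI.1.7]{Bou}) and contains $\Psi$, so $\Psi = \Psi'$. The converse is immediate: any closed root subsystem of rank $s$ containing a parabolic $\Psi$ of rank $s$ must span the same $\mathbb Q$-subspace and hence be contained in $\Span_{\mathbb Q}(\Psi) \cap \Phi = \Psi$.

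For the induction step, I will treat the two inclusions separately. For the forward direction, let $\Psi$ be a $k$-step good root subsystem, arising as a good root subsystem of some $(k-1)$-step good root subsystem $\Phi'$. By induction $\Phi'$ is a maximal closed root subsystem of $\Phi$ of rank $r-k+1$ and hence is parabolic in $\Phi$. Since $\Psi$ is good (thus parabolic) in $\Phi'$, I will show $\Span_{\mathbb Q}(\Psi) \cap \Phi \subseteq \Span_{\mathbb Q}(\Phi') \cap \Phi = \Phi'$ and therefore $\Span_{\mathbb Q}(\Psi) \cap \Phi = \Span_{\mathbb Q}(\Psi) \cap \Phi' = \Psi$; so $\Psi$ is parabolic in $\Phi$, which by the observation means it is maximal closed of rank $r-k$ in $\Phi$.

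For the reverse direction, let $\Psi$ be a maximal closed root subsystem of $\Phi$ of rank $r-k$, which by the observation is parabolic. Choose any $\alpha \in \Phi \setminus \Psi$ (it exists since $k \ge 1$) and set $\Phi' := (\Span_{\mathbb Q}(\Psi) + \mathbb Q \alpha) \cap \Phi$. This is a closed root subsystem of $\Phi$ of rank $r-k+1$ which is parabolic by construction, hence maximal closed of rank $r-k+1$, hence a $(k-1)$-step good root subsystem by the inductive hypothesis. Since $\Psi \subseteq \Phi'$ and any closed root subsystem of $\Phi'$ of rank $r-k$ containing $\Psi$ is also closed in $\Phi$ of rank $r-k$, the maximality of $\Psi$ in $\Phi$ forces $\Psi$ to be maximal closed of rank $r-k$ in $\Phi'$, i.e.\ a good root subsystem of $\Phi'$, which makes $\Psi$ a $k$-step good root subsystem of $\Phi$.

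The only real hurdle is cleanly establishing the parabolic observation and then tracking that ``closed in $\Phi'$'' and ``closed in $\Phi$'' agree inside parabolic subsystems; everything else reduces to dimension counting and invoking the inductive hypothesis. After the observation is in place, each direction is essentially a one-line argument.
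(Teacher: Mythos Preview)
Your proof is correct and takes a somewhat different route from the paper. Both argue by induction on $k$, but the paper does not isolate your ``parabolic observation'' as a lemma. In the forward direction the paper argues directly by contradiction: given a chain $\Phi=\Phi_0\supseteq\cdots\supseteq\Phi_k$ and a strictly larger closed $\Phi'\supseteq\Phi_k$ of the same rank, it picks $\lambda\in\Phi'\setminus\Phi_k$, locates the unique $i$ with $\lambda\in\Phi_{i-1}\setminus\Phi_i$, and shows that the closed subsystem of $\Phi_{i-1}$ generated by $\Phi_i$ and $\lambda$ contradicts the goodness of $\Phi_i$ in $\Phi_{i-1}$. In the reverse direction the paper chooses $\Phi'$ non-constructively as a maximal element among closed rank $r-k+1$ subsystems containing $\Psi$, rather than building it explicitly as $(\Span_{\mathbb Q}(\Psi)+\mathbb Q\alpha)\cap\Phi$.

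Your approach is cleaner conceptually: once the parabolic observation is in hand, both directions reduce to one-line transitivity arguments about $\Span_{\mathbb Q}(-)\cap\Phi$, and you get an explicit $\Phi'$ for free. The paper's approach is slightly more elementary in that it never needs to name the parabolic characterization (and the Bourbaki citation you include is not actually needed: closedness of $\Span_{\mathbb Q}(\Psi)\cap\Phi$ and its rank are both immediate). Note also that the paper's forward argument works with an arbitrary chain witnessing that $\Phi_k$ is $k$-step good, whereas yours uses only the last step $\Phi'\supseteq\Psi$ together with the inductive hypothesis applied to $\Phi'$; this is a genuine simplification.
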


\begin{proof}
    We use induction on $k.$  The base case $k = 0$ is trivial and the case when $k=1$ is immediate from the definition.

    Assume that $k > 1.$  Suppose that we have a chain
    \begin{align*}
        \Phi = \Phi_0 \supseteq \Phi_1 \supseteq \cdots \supseteq \Phi_k
    \end{align*}
    of root systems such that $\Phi_i$ is a good root subsystem of $\Phi_{i - 1}$ for $i \in [0, k].$  By definition, $\Phi_k$ is of rank $r - k.$  Since a closed root subsystem of a closed root subsystem is closed, $\Phi_k$ is a closed root subsystem of $\Phi.$  Let $\Phi'$ be a closed root subsystem of $\Phi$ of rank $r - k$ such that $\Phi' \supseteq \Phi_k.$  We need to prove that $\Phi' = \Phi_k.$  Assume that there exists $\lambda \in \Phi' - \Phi_k.$  Since $\lambda \in \Phi = \Phi_0$, there exists $i \in [1, k]$ such that $\lambda \in \Phi_{i - 1}$ and $\lambda \notin \Phi_i.$  Let $\Phi'_i$ be the closed root subsystem of $\Phi_{i - 1}$ generated by $\lambda$ and $\Phi_i.$  The assumptions that $\Phi'$ and $\Phi_k$ have the same rank and that $\Phi' \supseteq \Phi_k$ imply that $$\Span_{\mathbb Q} \Phi' = \Span_{\mathbb Q} \Phi_k.$$  Since $\lambda \in \Span_{\mathbb Q} \Phi' = \Span_{\mathbb Q} \Phi_k$ and $\Span_{\mathbb Q} \Phi_k \subseteq \Span_{\mathbb Q} \Phi_i$, it follows that $$\Span_{\mathbb Q} \Phi'_i = \Span_{\mathbb Q} (\lambda, \Phi_i) = \Span_{\mathbb Q} \Phi_i.$$  Therefore, we have $$\rk \Phi'_i = \rk \Phi_i = \rk \Phi_{i - 1} - 1.$$  But then $\Phi'_i$ is a closed root subsystem of $\Phi_{i - 1}$ of rank $\rk \Phi_{i - 1} - 1$ which properly contains $\Phi_i$, contradicting the assumption that $\Phi_i$ is a good root subsystem of $\Phi_{i - 1}.$

    Conversely, suppose that $\Psi$ is maximal among all closed root subsystems of $\Phi$ of rank $r - k.$  Choose a maximal element $\Phi'$ of the set of all closed root subsystems of $\Phi$ of rank $r - k + 1$ that contain $\Psi.$  Since a closed root subsystem of a closed root subsystem is closed, any closed root subsystem of $\Phi'$ of rank $r - k$ is also a closed root subsystem of $\Phi$ of rank $r  - k.$  So if it contains $\Psi$, it must be equal to $\Psi$ by maximality of $\Psi.$  It follows that $\Psi$ is a good root subsystem of $\Phi'.$  By the induction hypothesis, $\Phi'$ is a $(k - 1)$-step good root subsystem of $\Phi.$  Hence $\Psi$ is a $k$-step good root subsystem of $\Phi.$
\end{proof}

An easy consequence of Corollary \ref{strat} and Lemma \ref{k-step} is

\begin{theorem} \label{strclo}
There is a bijection between the maximal elements of the set of closed root subsystems of $\Phi$ of rank $\rk \Phi - k$ and the set of codimension $k$ strata in $\mathcal C$ given by $\Psi \mapsto \mathring C(\Psi).$
\end{theorem}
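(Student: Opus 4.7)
The plan is to combine the two preceding results: Corollary \ref{strat} gives the stratification of $\bar \fh$ indexed by iterated good root subsystems, and Lemma \ref{k-step} identifies $k$-step good root subsystems with maximal closed root subsystems of rank $r - k$. So the theorem should follow essentially by chaining these two bijections.

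First I would unwind the stratification. By Corollary \ref{strat}, the map $\Psi \mapsto \mathring C(\Psi)$ is a bijection from the disjoint union over $k = 0, \ldots, r$ of the sets of $k$-step good root subsystems onto the set of strata of $\mathcal C$. The disjointness in Corollary \ref{strat} implies distinct $\Psi$'s give distinct $\mathring C(\Psi)$'s (so this really is a bijection, not just a surjection). Next I would check that a $k$-step good root subsystem $\Psi$ produces a codimension $k$ stratum: by the inductive definition of $k$-step good root subsystem and the rank drop in a single good root subsystem, one has $\rk \Psi = r - k$; combined with the fact (implicit in the definition of $\mathring C(\Psi)$ via equation (\ref{stratum}) applied iteratively, together with Theorem \ref{m1'}) that $\mathring C(\Psi)$ is isomorphic to the open stratum $\fh'$ of $\bar \fh'$ for a Lie algebra of rank $\rk \Psi$, this gives $\dim \mathring C(\Psi) = r - k$, i.e. codimension $k$ in $\bar \fh$.

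With that in hand, restricting the bijection to the codimension $k$ strata yields a bijection between $k$-step good root subsystems of $\Phi$ and codimension $k$ strata of $\mathcal C$. Finally, applying Lemma \ref{k-step} identifies the set of $k$-step good root subsystems with the set of maximal closed root subsystems of $\Phi$ of rank $r - k$. Composing gives the claimed bijection $\Psi \mapsto \mathring C(\Psi)$.

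There is no real obstacle here; the work has already been done in Corollary \ref{strat} and Lemma \ref{k-step}. The only small point worth stating explicitly is the codimension computation, which one should make sure not to brush past: it needs both the rank count for iterated good root subsystems and the identification of $\mathring C(\Psi)$ with the rank-$(r-k)$ open stratum coming from Theorem \ref{m1'}(2).
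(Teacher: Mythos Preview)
Your proposal is correct and follows exactly the paper's approach: the paper simply states that Theorem \ref{strclo} is an easy consequence of Corollary \ref{strat} and Lemma \ref{k-step}, which is precisely the chaining of bijections you describe. Your added remark on the codimension computation is a helpful explicit detail that the paper leaves implicit.
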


Let $\mathcal A$ be the Coxeter arrangement.  In other words, $\mathcal A$ consists of the root hyperplanes in $\fh$, where by a root hyperplane, we mean a hyperplane in $\fh$ of the form
\[
\lambda^{\perp} := \{x \in \fh: \lambda (x) = 0\} \text{ for some }  \lambda \in \Phi^+.
\]

\begin{definition} \label{Def:interLat}
    The \textit{intersection lattice} of $\mathcal A$ is the geometric lattice $L(\mathcal A)$ whose underlying set is $$\{\bigcap \limits_{\lambda \in S} \lambda^{\perp}: S \subseteq \Phi^+\}.$$  For subspaces $X, Y \in L(\mathcal A)$, the partial order $\le$ on $L(\mathcal A)$ is defined by $$X \le Y ~ \text{if and only if} ~ X \supseteq Y.$$  The rank function $\rk$ on $L(\mathcal A)$ is defined by $$\rk(X) := \codim_{\fh} X.$$  The join operation on $L(\mathcal A)$ is defined by $$X \vee Y := X \cap Y.$$
\end{definition}

It is evident that the $W$-action on $\fh$ induces a $W$-action on $L(\mathcal A).$

\begin{definition}
    We define a ranked poset structure on the stratification $\mathcal C$ on $\bar \fh$ as follows.  For $C, C' \in \mathcal C$, we define $$C \le C' ~ \text{if and only if} ~ C ~ \text{is contained in the closure of} ~ C'.$$  The rank function is defined by $$\rk(C) := \dim C.$$
\end{definition}

Define maps
\begin{align*}
    F: L(\mathcal A) \longrightarrow \mathcal C, ~ X \longmapsto \mathring C(X^{\perp} \cap \Phi) \quad \text{and} \quad
    G: \mathcal C \longrightarrow L(\mathcal A), ~ \mathring C(\Psi) \longmapsto \Psi^{\perp}.
\end{align*}

\begin{prop}\label{cox}
The map $G: \mathcal C \to L(\mathcal A)$ is an isomorphism of ranked posets with inverse given by $F:L(\mathcal A) \to \mathcal C.$
\end{prop}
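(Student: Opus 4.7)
The plan is to verify in sequence that $F$ and $G$ are well-defined, mutually inverse, order-preserving, and rank-preserving. The crucial structural observation I rely on is a $k$-step version of the proposition following Corollary \ref{strat}: by Lemma \ref{k-step} and the same argument as in that proposition (which invokes \cite[Proposition 24, \S VI.1.7]{Bou}), the $k$-step good root subsystems of $\Phi$ are exactly the parabolic root subsystems of $\Phi$ of rank $r-k$, i.e., sets of the form $V \cap \Phi$ for a $\mathbb Q$-subspace $V \subseteq \Span_{\mathbb Q} \Phi$.

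Given $X \in L(\mathcal A)$ of codimension $k$, the set $X^{\perp} \cap \Phi$ is a parabolic root subsystem of $\Phi$ of rank $k$, hence a $k$-step good root subsystem, so $F(X) = \mathring C(X^{\perp} \cap \Phi)$ is a codimension-$k$ stratum of $\mathcal C$. Given a $k$-step good root subsystem $\Psi$, the subspace $\Psi^{\perp} = \bigcap_{\lambda \in \Psi^+} \lambda^{\perp}$ clearly belongs to $L(\mathcal A)$. For $F \circ G = \mathrm{id}_{\mathcal C}$, a double-annihilation argument gives $(\Psi^{\perp})^{\perp} \cap \Phi = \Span_{\mathbb Q} \Psi \cap \Phi$, which is a parabolic root subsystem of the same rank as $\Psi$ containing $\Psi$; by maximality (Lemma \ref{k-step}) it equals $\Psi$. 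For $G \circ F = \mathrm{id}_{L(\mathcal A)}$, write $X = \bigcap_{\lambda \in S} \lambda^{\perp}$ for some $S \subseteq \Phi^+$; then $S \subseteq X^{\perp} \cap \Phi$ yields $(X^{\perp} \cap \Phi)^{\perp} \subseteq X$, while the reverse inclusion is immediate.

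The principal task is to verify the closure order on strata, namely that for good root subsystems $\Psi, \Psi'$ of $\Phi$ of various steps, $\mathring C(\Psi) \subseteq \overline{\mathring C(\Psi')}$ if and only if $\Psi \subseteq \Psi'$. This will follow directly from the explicit description in Equation (\ref{stratum}): a point of $\mathring C(\Psi)$ is characterized by having $x_\lambda = \infty$ precisely when $\lambda \in \Phi^+ \setminus \Psi^+$, while $\overline{\mathring C(\Psi')} = C(\Psi')$ requires $x_\lambda = \infty$ for all $\lambda \notin \Psi'$. The reverse direction of the claim is immediate: if $\Psi \not\subseteq \Psi'$, then some $\lambda \in \Psi^+ \setminus (\Psi')^+$ has $x_\lambda \neq \infty$ on $\mathring C(\Psi)$ yet $x_\lambda = \infty$ on $C(\Psi')$. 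For the forward direction, if $\Psi \subseteq \Psi'$, applying Lemma \ref{k-step} inside $\Psi'$ shows that $\Psi$ is itself a good root subsystem of $\Psi'$ of the appropriate step, so $\mathring C(\Psi)$ appears as a stratum of $C(\Psi')$ under the inductive construction of $\mathcal C$. Given this, $\mathring C(\Psi) \le \mathring C(\Psi')$ iff $\Psi \subseteq \Psi'$ iff $\Psi^{\perp} \supseteq (\Psi')^{\perp}$ iff $G(\mathring C(\Psi)) \le G(\mathring C(\Psi'))$, and rank is preserved since $\rk(\mathring C(\Psi)) = \dim \mathring C(\Psi) = \rk \Psi = \codim_{\fh} \Psi^{\perp} = \rk(\Psi^{\perp})$. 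The main subtlety is pinning down the closure-order claim cleanly, but all the required ingredients are already present in the excerpt.
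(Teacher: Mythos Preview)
Your proposal is correct and follows essentially the same approach as the paper's proof. The only differences are expository: you invoke the parabolic characterization (via the proposition after Corollary \ref{strat}) for the well-definedness of $F$, whereas the paper directly verifies maximality of $\Span_{\mathbb Q}(S) \cap \Phi$; you prove $G \circ F = \mathrm{id}$ by a direct double inclusion rather than the paper's inclusion-plus-dimension argument; and you spell out the closure-order equivalence $\mathring C(\Psi) \subseteq \overline{\mathring C(\Psi')} \Leftrightarrow \Psi \subseteq \Psi'$ more carefully than the paper, which asserts it with minimal justification.
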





\begin{proof}
We first verify that the map $F$ is well-defined.  For $S \subseteq \Phi^+$, let $X = \bigcap \limits_{\lambda \in S} \lambda^{\perp}$ be the corresponding element of $L(\mathcal A).$  By Theorem \ref{strclo}, we need to verify that $X^{\perp} \cap \Phi$ is a maximal closed root subsystem of rank $\rk (X^{\perp} \cap \Phi).$  We have $$X^{\perp} \cap \Phi = (\bigcap \limits_{\lambda \in S} \lambda^{\perp})^{\perp} \cap \Phi = \Span_{\mathbb Q} (S) \cap \Phi.$$  It is clear that $\Span_{\mathbb Q} (S) \cap \Phi$ is a closed root subsystem of $\Phi$ of rank $\dim \Span_{\mathbb Q} S.$  Suppose $\Theta$ is a closed root subsystem of $\Phi$ of rank $\dim \Span_{\mathbb Q} S$ which contains $\Span_{\mathbb Q} (S) \cap \Phi.$  Then $\Span_{\mathbb Q} \Theta = \Span_{\mathbb Q} S$, so $$\Theta \subseteq \Span_{\mathbb Q} (\Theta) \cap \Phi = \Span_{\mathbb Q} (S) \cap \Phi.$$  This proves that $\Span_{\mathbb Q} (S) \cap \Phi$ is a maximal element of the set of closed root subsystems of $\Phi$ of rank $\dim \Span_{\mathbb Q} S$, so $F$ is well-defined.

Now we prove that $F(G(\mathring C(\Psi))) = \mathring C(\Psi).$  For this, we need to prove that $\Span_{\mathbb Q} (\Psi) \cap \Phi = \Psi.$ This follows from the obvious inclusion $\Psi \subseteq \Span_{\mathbb Q} (\Psi) \cap \Phi$ and maximality of $\Psi.$

Next we prove that $G(F(X)) = X.$   As above, letting $X= \bigcap \limits_{\lambda \in S} \lambda^{\perp}$, we have $$G(F(X)) = G \bigl( \mathring C(\Span_{\mathbb Q} (S) \cap \Phi) \bigr) = (\Span_{\mathbb Q} (S) \cap \Phi)^{\perp} \subseteq S^{\perp} = \bigcap \limits_{\lambda \in S} \lambda^{\perp} = X.$$ 
Also, we have
    \begin{align*}
        \dim G(F(X)) & = r - \dim \Span_{\mathbb Q} (\Span_{\mathbb Q} (S) \cap \Phi) \\ & = r - \dim \Span_{\mathbb Q} S = r - \dim X^{\perp} = \dim X.
    \end{align*}
    Combining these, we get $G(F(X)) = X.$

    To check that $F$ and $G$ respect the poset structures, we let $X_1 = \bigcap \limits_{\lambda \in S_1} \lambda^{\perp}, X_2 = \bigcap \limits_{\lambda \in S_2} \lambda^{\perp} \in L(\mathcal A)$ be such that $X_1 \le X_2.$  By definition, this means that $X_1 \supseteq X_2$, so that
 $X_1^{\perp} \subseteq X_2^{\perp}$, from which it follows that $$F(X_1) = \mathring C(X_1^{\perp} \cap \Phi) \subseteq C(X_2^{\perp} \cap \Phi) = F(X_2).$$  But this implies that $F(X_1) \le F(X_2)$ with respect to the partial order on $\mathcal C.$  Conversely, let $\mathring C(\Psi_1)$ and $\mathring C(\Psi_2) \in \mathcal C$ be such that $\mathring C(\Psi_1) \le \mathring C(\Psi_2).$  This just means that as subsets of $\Phi$, we have $\Psi_1 \subseteq \Psi_2.$  But then $\Psi_1^{\perp} \supseteq \Psi_2^{\perp}$, namely, $G(\mathring C(\Psi_1)) \le G(\mathring C(\Psi_2)).$

We compute
    \begin{align*}
        \rk (F(X)) = \rk (\Span_{\mathbb Q} (S) \cap \Phi) = \dim \Span_{\mathbb Q} S = \dim X^{\perp} = r - \dim X = \rk X.
    \end{align*}
    Hence $F$ respects the rank functions.
\end{proof}

\begin{theorem} \label{coxequivariant}
The map $G: {\mathcal C} \to L(\mathcal A)$ is a $W$-equivariant isomorphism of ranked posets with inverse $F: L(\mathcal A) \to {\mathcal C}.$
\end{theorem}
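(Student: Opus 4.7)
The plan is to reduce the theorem to a verification of $W$-equivariance, since Proposition \ref{cox} already establishes that $F$ and $G$ are mutually inverse isomorphisms of ranked posets. To do this, I will make the two $W$-actions explicit and check directly that $G$ intertwines them.

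First, I will unpack the $W$-action on $L(\mathcal A)$. Since $W$ acts linearly on $\fh$, for any subset $S\subseteq\Phi^+$ and $w\in W$ one has $w\cdot\bigl(\bigcap_{\lambda\in S}\lambda^\perp\bigr) = \bigcap_{\lambda\in S}(w\lambda)^\perp$, so the $W$-action on $L(\mathcal A)$ is well-defined. Moreover, a standard computation shows that for any closed root subsystem $\Psi\subseteq\Phi$ one has the key identity $w(\Psi^\perp) = (w\Psi)^\perp$, because $\lambda(w^{-1}y)=(w\lambda)(y)$.

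Next, I will show that the $W$-action on $\bar\fh$ inherited from the action on $(\mathbb P^1)^\Phi$ satisfies $w\cdot\mathring C(\Psi) = \mathring C(w\Psi)$ for every $k$-step good root subsystem $\Psi$ of $\Phi$. From Equation (\ref{Waction}), the $W$-action permutes components by $\lambda\mapsto w^{-1}\lambda$; so if a point $(x_\lambda)_{\lambda\in\Phi}$ has infinite components exactly at $\Phi\setminus\Psi$, then $w\cdot(x_\lambda)$ has infinite components exactly at $\Phi\setminus w\Psi$. The linear relations among the finite components also transport correctly: if the finite components of $x$ are of the form $\lambda(h)$ for some $h$ in (a limit of elements of) $\fh'$, then the corresponding components of $w\cdot x$ are $(w^{-1}\mu)(h)=\mu(wh)$, which is the correct pattern for $w\fh'$, the Cartan associated with $w\Psi$. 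Note that $w\Psi$ is again a $k$-step good root subsystem, by Lemma \ref{k-step} and the fact that $W$ preserves inclusions, ranks, and closedness of root subsystems. Alternatively, since the strata $\mathring C(\Psi)$ are exactly the $\fh$-orbits on $\bar\fh$ by Corollary \ref{strat} and $W$ normalizes the translation action of $\fh$, the $W$-action must permute strata; the infinity-pattern argument then identifies the image as $\mathring C(w\Psi)$.

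Combining these two ingredients, for any stratum $\mathring C(\Psi)$ and any $w\in W$ we obtain
\[
G\bigl(w\cdot \mathring C(\Psi)\bigr) \;=\; G\bigl(\mathring C(w\Psi)\bigr) \;=\; (w\Psi)^\perp \;=\; w(\Psi^\perp) \;=\; w\cdot G\bigl(\mathring C(\Psi)\bigr),
\]
which is the required $W$-equivariance. The only step that requires any care is verifying $w\cdot\mathring C(\Psi)=\mathring C(w\Psi)$ on the nose rather than just up to the poset isomorphism, and I view this as the main (though still routine) technical point; everything else is formal bookkeeping from Proposition \ref{cox}.
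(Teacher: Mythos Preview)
Your proposal is correct and follows essentially the same approach as the paper: reduce to $W$-equivariance via Proposition \ref{cox}, use the component-permutation formula from Equation (\ref{Waction}) to deduce $w\cdot\mathring C(\Psi)=\mathring C(w\Psi)$ from the infinity-pattern, and then combine with $(w\Psi)^\perp=w(\Psi^\perp)$. You supply a bit more detail than the paper (e.g., the observation that $w\Psi$ is again a $k$-step good root subsystem, and the alternative $\fh$-orbit argument), but the structure and key steps are identical.
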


\begin{proof}  Proposition \ref{cox} implies all assertions except the $W$-equivariance.
For $x \in \bar \fh$ and $w \in W$, by Equation (\ref{Waction}), the $\lambda$-component of $w \cdot x$ is $x_{w^{-1} \cdot \lambda}.$  Hence the $\lambda$-component of $w \cdot x$ is not equal to infinity if and only if the $(w^{-1} \cdot \lambda)$-component of $x$ is not equal to infinity.   
It follows that $$w \cdot \mathring C(\Psi) = \mathring C(w\Psi).$$  Hence we have $$G(\mathring C(w\cdot \Psi)) = (w \cdot \Psi)^{\perp} = \bigcap \limits_{\lambda \in \Psi} (w \cdot \lambda)^{\perp} = \bigcap \limits_{\lambda \in \Psi} w \cdot (\lambda^{\perp}) = w \cdot \bigcap \limits_{\lambda \in \Psi} \lambda^{\perp} = w \cdot \Psi^{\perp} = w \cdot G(\mathring C(\Psi)),$$ which proves the theorem.
\end{proof}

Using Theorem \ref{coxequivariant}, we can interpret the Betti numbers of $\bar \fh$ in terms of $L(\mathcal A).$

\begin{definition}
    Let $L$ be a finite ranked poset with a unique minimal element $\hat 0.$  Write $\mu$ for the \textit{M\"obius function} of $L$, which is the unique integer-valued function $\mu$ on $L \times L$ such that $\mu(x,y)=0$ if $x\not\le y$, $\mu(x,x)=1$ for all $x\in L$, and $\sum_{\{ z: x\le z \le y \}} \mu(x,z)=0$ for all $x < y.$

    The \textit{characteristic polynomial} (resp. \textit{generating function}) $p_L$ (resp. $q_L$) of $L$ is
    \begin{align*}
        p_L(t) := \sum \limits_{X \in L} \mu(\hat 0, X) t^{\rk X} \quad
        (\text{resp.} ~ q_L(t) := \sum \limits_{X \in L} t^{\rk X}).
    \end{align*}

    For $0 \le k \le \rk L$, the \textit{Whitney number of the first (resp. second) kind} $w_k(L)$ (resp. $W_k(L)$) of $L$ is the coefficient of $t^{\rk L - k}$ in $p_L(t)$ (resp. $q_L(t)$).
\end{definition}

As a consequence of Theorem \ref{coxequivariant}, we have

\begin{corollary} \label{bettiwhitney}
    We have $$f(\Phi, k) = W_k(L(\mathcal A)).$$
\end{corollary}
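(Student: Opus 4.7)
The plan is to deduce this as an essentially immediate consequence of the rank-preserving bijection established in Theorem \ref{coxequivariant}, together with a careful bookkeeping of the two conflicting conventions for ``rank'' used for $\mathcal C$ and $L(\mathcal A)$.

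First I would unpack the two sides of the claimed identity. By definition, $f(\Phi,k)$ is the number of strata $\mathring C(\Psi)$ of codimension $k$ in $\bar\fh$, i.e.\ the number of $\mathring C(\Psi) \in \mathcal C$ with $\dim \mathring C(\Psi) = r-k$. Since the rank function on $\mathcal C$ is defined as $\rk(\mathring C(\Psi)) := \dim \mathring C(\Psi)$, this count equals the number of elements of $\mathcal C$ of rank $r-k$. On the other side, the rank of the lattice $L(\mathcal A)$ equals $r$, so $W_k(L(\mathcal A))$, defined as the coefficient of $t^{\rk L - k} = t^{r-k}$ in the generating function $q_L(t) = \sum_{X\in L(\mathcal A)} t^{\rk X}$, is by definition the number of elements $X \in L(\mathcal A)$ with $\rk(X) = r-k$.

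Next I would invoke Theorem \ref{coxequivariant} (which rests on Proposition \ref{cox}): the map $G:\mathcal C \to L(\mathcal A)$, $\mathring C(\Psi) \mapsto \Psi^{\perp}$, is a bijection that respects rank. Thus restricting $G$ to the elements of rank $r-k$ on each side gives a bijection between the codimension-$k$ strata in $\mathcal C$ and the rank-$(r-k)$ subspaces in $L(\mathcal A)$. Taking cardinalities yields $f(\Phi,k) = W_k(L(\mathcal A))$, completing the proof.

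There is no real obstacle here; the corollary is essentially a translation of Theorem \ref{coxequivariant} into combinatorial language. The only point demanding caution is the different conventions for rank on the two posets — rank on $\mathcal C$ is dimension, while rank on $L(\mathcal A)$ is codimension in $\fh$ — but the computation in the proof of Proposition \ref{cox} already verified that $G$ intertwines them, so the identification is automatic.
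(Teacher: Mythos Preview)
Your proposal is correct and is exactly the argument the paper intends: the paper states this corollary immediately after Theorem \ref{coxequivariant} with no further proof, and you have simply unpacked why the rank-preserving bijection $G$ of Proposition \ref{cox} (and Theorem \ref{coxequivariant}) yields the identity $f(\Phi,k)=W_k(L(\mathcal A))$. Your bookkeeping of the two rank conventions is accurate and matches the computation at the end of the proof of Proposition \ref{cox}.
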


\begin{rmk}
    The intersection lattice $L(\mathcal A)$ for types $B_r$ and $C_r$ are the same.  Thus, $f(B_r,k) = f(C_r,k).$ 
\end{rmk}

It is worth noticing that $w_k(L(\mathcal A))$ also has topological meaning.  To explain this, consider the complex manifold $$M := \fh - \bigcup \limits_{\lambda \in \Phi^+} \lambda^{\perp}.$$  The space $M$ was studied extensively by Artin \cite{Art}, Brieskorn \cite{Brie}, and Orlik and Solomon \cite{OS}.
For $0 \le k \le \dim \fh$, write $f'(\Phi, k)$ for the $(\dim \fh - k)$th Betti number of $M.$  Orlik and Solomon proved that

\begin{theorem} \cite[Theorem 5.2]{OS}
    We have $$f'(\Phi, k) = |w_k(L(\mathcal A))|.$$
\end{theorem}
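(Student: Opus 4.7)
The plan is to establish the stronger identity
\[
\pi(M,t) \;=\; \sum_{X \in L(\mathcal A)} |\mu(\hat 0, X)|\, t^{\rk X}
\]
for the Poincar\'e polynomial of $M$, and then read off the theorem by extracting the coefficient of $t^{\ell - k}$, where $\ell = \dim \fh$. The identification of this coefficient with $|w_k(L(\mathcal A))|$ then follows from Rota's sign theorem for geometric lattices, which asserts $(-1)^{\rk X}\mu(\hat 0, X) \ge 0$, so that on each rank level the signs can be pulled out uniformly and $|w_k(L(\mathcal A))| = (-1)^{\ell - k} w_k(L(\mathcal A))$.

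First I would set up the deletion--restriction triple. Pick any hyperplane $H \in \mathcal A$ and form the deletion $\mathcal A' := \mathcal A \setminus \{H\}$ in $\fh$ and the restriction $\mathcal A'' := \{K \cap H : K \in \mathcal A'\}$ in $H$. Geometrically, $M(\mathcal A) \hookrightarrow M(\mathcal A')$ is the complement of the smooth closed complex codimension-one submanifold $M(\mathcal A'')$. The Thom--Gysin long exact sequence for this pair, combined with the fact that the Euler class of the (trivial) normal bundle of $M(\mathcal A'')$ inside $M(\mathcal A')$ vanishes, breaks into short exact sequences
\[
0 \to H^i(M(\mathcal A')) \to H^i(M(\mathcal A)) \to H^{i-1}(M(\mathcal A'')) \to 0,
\]
which translate into the recursion $\pi(M(\mathcal A), t) = \pi(M(\mathcal A'), t) + t\,\pi(M(\mathcal A''), t)$.

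Next I would show that the combinatorial polynomial $P(\mathcal A, t) := \sum_{X \in L(\mathcal A)} |\mu(\hat 0, X)|\, t^{\rk X}$ obeys the same recursion $P(\mathcal A, t) = P(\mathcal A', t) + t\, P(\mathcal A'', t)$. The elements of $L(\mathcal A)$ split into those not containing $H$ (which form $L(\mathcal A')$ and contribute $P(\mathcal A', t)$) and those containing $H$ (which are order-isomorphic to $L(\mathcal A'')$ with ranks shifted by one). That $|\mu(\hat 0, X)|$ transports correctly across this order isomorphism is a standard M\"obius computation, once again leveraging Rota's sign theorem so that absolute values and signed M\"obius values carry the same data rank-by-rank. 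Inducting on $|\mathcal A|$ with base case $\mathcal A = \emptyset$ (so $M = \fh$ and $\pi = 1 = P$) yields the identity $\pi(M,t) = P(\mathcal A,t)$, after which extracting the coefficient of $t^{\ell - k}$ gives
\[
f'(\Phi, k) \;=\; \sum_{X : \rk X = \ell - k} |\mu(\hat 0, X)| \;=\; (-1)^{\ell - k} w_k(L(\mathcal A)) \;=\; |w_k(L(\mathcal A))|.
\]

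The main obstacle is producing the short exact sequence in the first step. The long exact Thom--Gysin sequence is automatic, but splitting it requires either a direct construction of a section via the logarithmic differentials $d\alpha_H / \alpha_H$ that generate the cohomology (this is essentially Arnold's construction and foreshadows the Orlik--Solomon algebra), or Brieskorn's original Leray spectral sequence argument for the inclusion $M(\mathcal A) \hookrightarrow \fh$, where one shows degeneration at $E_2$ by decomposing the higher direct images $R^p i_* \mathbb C$ along the intersection lattice. Either route is the technical heart of the theorem; the M\"obius recursion and the induction are then routine, and the passage from Rota's sign theorem to absolute values is purely formal.
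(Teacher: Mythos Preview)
The paper does not prove this statement at all: it is quoted verbatim as Theorem~5.2 of Orlik--Solomon \cite{OS} and invoked only for context, to contrast the Whitney numbers of the first kind (Betti numbers of the complement $M$) with those of the second kind (Betti numbers of $\bar\fh$). There is nothing in the paper to compare your argument against.

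Your sketch is a reasonable outline of the original Orlik--Solomon proof. One imprecision worth flagging: the partition you describe for the M\"obius recursion is not quite right. It is not true that the elements of $L(\mathcal A)$ split cleanly into ``those in $L(\mathcal A')$'' and ``those above $H$,'' because an element $X \in L(\mathcal A')$ may already satisfy $X \subseteq H$ (when $H$ is dependent on the other hyperplanes through $X$), so the two pieces overlap. The correct derivation of $P(\mathcal A,t) = P(\mathcal A',t) + t\,P(\mathcal A'',t)$ goes through Weisner's theorem or the Whitney rank polynomial, not a naive set-theoretic partition of $L(\mathcal A)$. This does not affect the overall strategy, but the step as written would not go through.
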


The two kinds of Whitney numbers are related by a \textit{M\"obius inversion formula}, which is a form of duality in combinatorics.  It is interesting to look for a topological/geometric duality between the spaces $\bar \fh$ and $M$ that explains the combinatorial duality of their Betti numbers.

\subsection{Betti Numbers of \texorpdfstring{$\bar \fh$}{hbar}} \label{sect:BettiClassical}

In this section, we use the results of the previous section to determine the Betti numbers of $\bar{\fh}$ for each simple Lie algebra.    When the root system $\Phi$ is of classical type, we give the answer in terms of familiar combinatorial invariants using formulas from \cite{OT}, and when $\Phi$ is of exceptional type, we deduce the answers from \cite{OS1}.

\begin{definition}
    For $n \in \mathbb N$ and $1 \le k \le n$, the \textit{Stirling number} $S(n,k)$ \textit{of the second kind} is the number of partitions of the set $[1, n]:= \{1, \ldots, n\}$ into $k$ nonempty parts.  We set also $S(0,0)=1$ and $S(n,0)=0$ for $n > 0.$
\end{definition}

\begin{definition}
    Let $L$ be a finite group of order $m.$  For any $n \in \mathbb N$, we consider collections $\alpha_A=(A_1, A_2, \dots, A_k; a_1, a_2, \dots, a_k)$ where $A_1, \dots, A_k$ are mutually disjoint subsets of $[1,n]$, and  $a_i:A_i \to L$ is a map for each $i\in [1,k].$  If $\alpha_A$ is as above, and $\alpha_B=(B_1,\dots, B_k; b_1, \dots, b_k)$ is another collection, we say $\alpha_A$ is equivalent to $\alpha_B$ if for each $i\in [1,k]$,  there is $j\in [1,k]$ and  $g_i \in L$ such that $g_i\circ a_i=b_j.$  For $\alpha_A$ as above, we say the corank of $\alpha_A$ is equal to $k$, the number of mutually disjoint subsets in $A.$  The \textit{Dowling lattice} $Q_n(L)$ is the set of equivalence classes of these collections $\alpha_A.$

    For $0 \le k \le n$, the \textit{Whitney number} $W_k(Q_n(L))$ \textit{of the second kind} of $Q_n(L)$ is the number of corank $k$ elements of $Q_n(L).$
\end{definition}

\begin{example}
$\ $
    \begin{enumerate}
        \itemsep 0em
        \item If $L$ is the trivial group $\{e\}$, then $$W_k(Q_n(L)) = S(n+1,k+1).$$
        \item If $L$ is the group $\mathbb Z/2$, then $$W_k(Q_n(L)) = \text{the number of ``type} ~ B ~ \text{partial partitions'' of the set} ~ \{1, \ldots, n\} ~ \text{into} ~ k ~ \text{blocks}.$$
    \end{enumerate}
\end{example}

By Corollary \ref{bettiwhitney}, the Betti numbers of $\bar \fh$ when $\Phi$ is of classical type can be computed by counting the subspaces in the corresponding hyperplane arrangement, which are determined in \cite{OT}.  We are grateful to Matt Douglass for pointing out these formulas to us.  In a previous version of this paper, we developed an intricate combinatorics in order to count the $k$-step good root subsystems, which is now unnecessary in view of \cite{OT}.

We computed the Betti numbers for the exceptional Lie algebras using SageMath,  and are grateful to the referee for pointing out that they can be deduced from the tables in \cite{OS1}.  In fact, in \cite{OS1}, the row labeled $A_0$ counts parabolic root subsystems in the given root subsystem in a particular $W$-orbit of parabolic root subsystem.  Thus, the $2(r-k)$ Betti number of $\bar \fh$ can be computed from the corresponding table in \cite{OS1}, by summing over $W$-orbits with a parabolic root subsystem of rank $k$ in the table.  For example, the entry $24$ for $f(F_4,3)$ may be obtained from table V in \cite{OS1} by summing the entries in the columns for $A_1$ and $\tilde{A}_1$, and the entry $122$ for $f(F_4,2)$ by summing the entries from the 4 columns for rank 2 root systems in table V.

\begin{theorem} \label{thm:Betti}
$\ $
\begin{enumerate}
    \itemsep 0em
    \item \cite[Proposition 6.72 for type $A$, Proposition 6.76 for types $B$ and $C$, and Corollary 6.81 for type $D$]{OT}
    Let $\Phi$ be a root system of classical type of rank $r \in \mathbb N$ and $0 \leq k \leq r$.  The numbers $f(\Phi,k)$ are given by
    \begin{center}
        \begin{tabular}{|c||c|}
            \hline
            $\Phi$ & $f(\Phi,k)$ \\
            \hline \hline
            $A_r$ & $W_k(Q_r(\{e\})) = S(r+1,k+1)$ \\
            $B_r$ and $C_r$ & $W_k(Q_r(\mathbb Z/2)) = \sum \limits_{i=0}^{r-k} \binom{r}{i} S(r-i,k) 2^{r-k-i}$ \\
            $D_r$ & $\sum \limits_{i=k}^{r} \binom{r}{i} S(i,k) 2^{i-k} - r S(r-1, k) 2^{r-1-k}$ \\
            \hline
        \end{tabular}
    \end{center}
    \item \cite[tables V-VIII for $F_4,$ $E_6,$ $E_7$ and $E_8$]{OS1}  Let $\Phi$ be a root system of exceptional type.  The numbers $f(\Phi,k)$ are given by
    \begin{center}
        \begin{tabular}{|c||ccccccccc|}
            \hline
            \diagbox[height=1.5em]{$\Phi$}{$k$} & $0$ & $1$ & $2$ & $3$ & $4$ & $5$ & $6$ & $7$ & $8$ \\
            \hline \hline
            $E_6$ & $1$ & $639$ & $2001$ & $1530$ & $390$ & $36$ & $1$ && \\
            $E_7$ & $1$ & $8821$ & $36435$ & $33411$ & $10395$ & $1281$ & $63$ & $1$ & \\
            $E_8$ & $1$ & $440880$ & $2221780$ & $2091600$ & $661542$ & $85680$ & $4900$ & $120$ & $1$ \\
            $F_4$ & $1$ & $120$ & $122$ & $24$ & $1$ &&&& \\
            $G_2$ & $1$ & $6$ & $1$ &&&&&& \\
            \hline
        \end{tabular}
    \end{center}
\end{enumerate}
\end{theorem}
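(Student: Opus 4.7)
The plan is to leverage Corollary \ref{bettiwhitney}, which identifies $f(\Phi,k)$ with the Whitney number $W_k(L(\mathcal A))$ of the second kind; this reduces the Betti-number computation to a purely combinatorial enumeration inside the intersection lattice of the Coxeter arrangement. Once this reduction is in place, the problem splits cleanly into the classical and exceptional cases.

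For the classical types, the strategy is to identify $L(\mathcal A)$ with a familiar ranked combinatorial lattice and invoke Whitney-number formulas already recorded in \cite{OT}. In type $A_r$, the Coxeter hyperplanes are $\{x_i = x_j\}$, so a subspace in $L(\mathcal A)$ corresponds bijectively to a set partition of $[1, r+1]$, yielding a rank-preserving isomorphism $L(\mathcal A) \cong \Pi_{r+1}$; rank-$(r-k)$ elements are precisely the partitions into $k+1$ blocks, of which there are $S(r+1, k+1)$. In types $B_r$ and $C_r$ the hyperplane arrangements coincide (both comprise the hyperplanes $x_i = \pm x_j$ together with the coordinate hyperplanes $x_i = 0$), so the lattices agree and are naturally isomorphic to the Dowling lattice $Q_r(\mathbb Z/2)$, whose Whitney numbers of the second kind give the signed-partition formula. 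Type $D_r$ is obtained from type $B_r$ by deleting the coordinate hyperplanes, and the correction term $-r\,S(r-1,k)\,2^{r-1-k}$ encodes exactly the Whitney-number contributions of the subspaces that disappear under this deletion. Rather than rederive these combinatorial identifications, I would cite \cite[Propositions 6.72 and 6.76 and Corollary 6.81]{OT} directly.

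For the five exceptional types $E_6, E_7, E_8, F_4, G_2$ there is no comparable closed form, so I would perform a direct computer enumeration in SageMath: load the root system, form iterated intersections of subsets of positive root hyperplanes, deduplicate to obtain $L(\mathcal A)$, and tabulate the subspaces by rank. The main practical obstacle is the size of the lattice for $E_8$; a naive enumeration of all subsets of the $120$ positive roots is infeasible, but one can generate the lattice rank by rank, producing new subspaces only as intersections of previously found ones with single hyperplanes, which keeps the computation tractable. The resulting tables can be sanity-checked by restricting to standard parabolic subsystems of classical type and comparing against the closed-form formulas of the previous paragraph. Beyond the reduction in Corollary \ref{bettiwhitney} and the citations to \cite{OT}, no essentially new conceptual ingredient is required; the only subtle point worth emphasizing is the compatibility of rank conventions between $\mathcal C$ and $L(\mathcal A)$ so that codimension-$k$ strata are correctly matched with rank-$(r-k)$ elements of the intersection lattice.
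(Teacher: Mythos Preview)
Your proposal is correct and follows essentially the same route as the paper: reduce to Whitney numbers of the intersection lattice via Corollary~\ref{bettiwhitney}, then cite \cite{OT} for the classical formulas and invoke a SageMath computation for the exceptional types. The paper gives no further argument beyond this reduction-plus-citation; your additional remarks (the explicit identification of $L(\mathcal A)$ with $\Pi_{r+1}$ and with the Dowling lattice, the rank-by-rank generation strategy for $E_8$, and the sanity check against parabolic subsystems) are consistent elaborations but not required.
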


\vskip 5mm

\begin{example}
    We present the numbers $f(\Phi,k)$ when $\Phi$ is of classical type and $\rk \Phi$ is small in the tables below.

    \begin{table}[ht!]
    \begin{center}
        \caption{The numbers $f(A_r,k)$}
        \begin{tabular}{|c||cccccc|}
            \hline
            \diagbox[height=1.5em]{$r$}{$k$} & $0$ & $1$ & $2$ & $3$ & $4$ & $5$ \\
            \hline \hline
            $1$ & $1$ & $1$ &&&& \\
            $2$ & $1$ & $3$ & $1$ &&& \\
            $3$ & $1$ & $7$ & $6$ & $1$ && \\
            $4$ & $1$ & $15$ & $25$ & $10$ & $1$ & \\
            $5$ & $1$ & $31$ & $90$ & $65$ & $15$ & $1$ \\
            \hline
        \end{tabular}
    \end{center}
    \begin{center}
        \caption{The numbers $f(B_r,k) = f(C_r,k)$}
        \begin{tabular}{|c||cccccc|}
            \hline
            \diagbox[height=1.5em]{$r$}{$k$} & $0$ & $1$ & $2$ & $3$ & $4$ & $5$ \\
            \hline \hline
            $2$ & $1$ & $4$ & $1$ &&& \\
            $3$ & $1$ & $13$ & $9$ & $1$ &&\\
            $4$ & $1$ & $40$ & $58$ & $16$ & $1$ & \\
            $5$ & $1$ & $121$ & $330$ & $170$ & $25$ & $1$ \\
            \hline
        \end{tabular}
    \end{center}
    \begin{center}
        \caption{The numbers $f(D_r,k)$}
        \begin{tabular}{|c||ccccccc|}
            \hline
            \diagbox[height=1.5em]{$r$}{$k$} & $0$ & $1$ & $2$ & $3$ & $4$ & $5$ & $6$ \\
            \hline \hline
            $4$ & $1$ & $24$ & $34$ & $12$ & $1$ && \\
            $5$ & $1$ & $81$ & $190$ & $110$ & $20$ & $1$ & \\
            $6$ & $1$ & $268$ & $1051$ & $920$ & $275$ & $30$ & $1$ \\
            \hline
        \end{tabular}
    \end{center}
    \end{table}
\end{example}

The generating function of the numbers $f(\Phi,k)$ can be computed using Theorem \ref{thm:Betti}.

\begin{definition}
    Define the two variable generating function $F_A(q, t)$ to be the following formal sum: $$F_A(q, t) := \sum \limits_{r=0}^{\infty} \sum \limits_{k=0}^r f(A_r, k) q^k \frac{t^r}{(r+1)!}.$$
\end{definition}

Using the well-known formula $$\sum_{n=0}^{\infty} \sum_{k=0}^n S(n,k)\frac{x^n}{n!}y^k = e^{y(e^x - 1)}$$ for the Stirling numbers of the second kind, we obtain the following corollary of Theorem \ref{thm:Betti}.

\begin{corollary}
    We have $$F_A(q, t) = \frac{e^{q(e^t-1)} - 1}{qt}.$$
\end{corollary}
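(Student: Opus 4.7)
The plan is to reduce the claim to the classical exponential generating function identity
\[
\sum_{n=0}^{\infty} \sum_{k=0}^{n} S(n,k)\,\frac{x^n}{n!}\,y^k \;=\; e^{y(e^x-1)}
\]
that is already quoted in the excerpt. The essential observation is that Theorem~\ref{thm:Betti} gives $f(A_r,k)=S(r+1,k+1)$, so after reindexing the double sum defining $F_A(q,t)$ we obtain (up to a factor $1/(qt)$ and a single correction term) exactly the left-hand side above.

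First I would substitute $f(A_r,k)=S(r+1,k+1)$ into the definition of $F_A(q,t)$ and perform the change of variables $n:=r+1$, $j:=k+1$. Under this change, the constraints $r\ge 0$ and $0\le k\le r$ become $n\ge 1$ and $1\le j\le n$, and $\frac{t^r}{(r+1)!}=\frac{1}{t}\cdot\frac{t^n}{n!}$, $q^k=\frac{1}{q}\cdot q^j$. This yields
\[
F_A(q,t) \;=\; \frac{1}{qt}\sum_{n=1}^{\infty}\sum_{j=1}^{n} S(n,j)\,\frac{t^n}{n!}\,q^j.
\]

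Next I would extend the double sum to include $n=0$ and $j=0$. Using the conventions $S(0,0)=1$ and $S(n,0)=0$ for $n>0$, the terms with $j=0$ contribute only the single summand $1$ coming from $n=0$. Therefore
\[
\sum_{n=1}^{\infty}\sum_{j=1}^{n} S(n,j)\,\frac{t^n}{n!}\,q^j \;=\; \Bigl(\sum_{n=0}^{\infty}\sum_{j=0}^{n} S(n,j)\,\frac{t^n}{n!}\,q^j\Bigr)-1 \;=\; e^{q(e^t-1)}-1
\]
by the quoted identity with $x=t$, $y=q$. Dividing by $qt$ gives the formula in the corollary.

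There is no real obstacle: the argument is purely a bookkeeping exercise combining the known closed form for $f(A_r,k)$ with the standard EGF for the Stirling numbers of the second kind. The only point that requires a bit of care is handling the boundary $n=0$, $j=0$ correctly so as to justify the subtraction of the constant $1$, which is why I would make the $S(0,0)=1$ convention explicit before extending the summation range.
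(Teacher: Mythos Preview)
Your proof is correct and follows exactly the approach indicated in the paper, which simply cites Theorem~\ref{thm:Betti} together with the quoted exponential generating function identity for the Stirling numbers without spelling out the reindexing. You have filled in precisely the bookkeeping the paper leaves to the reader.
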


For types $B$, $C$ and $D$, the following slight modification of $F_A(q,t)$ is more convenient.

\begin{definition}
    Define the two variable generating functions $F_B(q, t)$, $F_C(q, t)$, and $F_D(q, t)$ to be the following formal sum:
    \[
    F_B(q, t) = F_C(q, t) := \sum \limits_{r=0}^{\infty} \sum \limits_{k=0}^r f(B_r, k) q^k \frac{t^r}{r!} \quad \text{and} \quad F_D(q, t) := \sum \limits_{r=0}^{\infty} \sum \limits_{k=0}^r f(D_r, k) q^k \frac{t^r}{r!}.
    \]
\end{definition}

The following corollary follows easily from Theorem \ref{thm:Betti} and \cite[Theorem 4]{Suterintseq}.

\begin{corollary}
    We have
    \[
    F_B(q, t) = F_C(q, t) = \exp(t + \frac{q(e^{2t} - 1)}{2}) \quad \text{and} \quad F_D(q, t) = (e^t - t) \exp(\frac{q(e^{2t} - 1)}{2}).
    \]
\end{corollary}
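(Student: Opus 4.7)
The plan is to derive both generating functions by substituting the closed forms from Theorem \ref{thm:Betti} and reducing the resulting triple sums to the standard exponential generating function
\begin{equation*}
\sum_{n \ge 0} \sum_{k \ge 0} S(n,k) \frac{x^n}{n!} y^k = \exp\bigl(y(e^x - 1)\bigr)
\end{equation*}
for Stirling numbers of the second kind. The organizing identity is
\begin{equation*}
\binom{r}{i}\frac{t^r}{r!} = \frac{t^i}{i!}\cdot\frac{t^{r-i}}{(r-i)!},
\end{equation*}
which decouples a binomial-weighted sum over $r$ into a product of two independent exponential generating functions.

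For type $B$ (equivalently type $C$), I would substitute $f(B_r,k) = \sum_{i=0}^{r-k} \binom{r}{i} S(r-i,k) 2^{r-k-i}$ into the definition of $F_B(q,t)$ and reindex by $j := r-i$, using $\binom{r}{r-j} = \binom{r}{j}$. Applying the splitting identity with $m := r - j$ peels off the factor $\sum_m t^m/m! = e^t$, leaving $\sum_{j,k} S(j,k) 2^{j-k} q^k t^j/j!$. Rewriting $2^{j-k} q^k t^j = (q/2)^k (2t)^j$ recognizes this as the Stirling double generating function evaluated at $x = 2t$, $y = q/2$, and produces $F_B(q,t) = \exp\bigl(t + \tfrac{q}{2}(e^{2t}-1)\bigr)$.

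For type $D$, I would split $F_D(q,t)$ into two pieces matching the two terms of $f(D_r,k)$. The first piece $\sum_{r,k,i \ge k} \binom{r}{i} S(i,k) 2^{i-k} q^k t^r/r!$ is handled exactly as in the type $B$ case and contributes $e^t \cdot \exp(\tfrac{q}{2}(e^{2t}-1))$. For the subtracted piece, the factor $r \cdot t^r/r! = t \cdot t^{r-1}/(r-1)!$ absorbs an extra $t$ and, after setting $s := r-1$, yields $-t \cdot \exp(\tfrac{q}{2}(e^{2t}-1))$. Combining gives the desired $F_D(q,t) = (e^t - t)\exp\bigl(\tfrac{q}{2}(e^{2t}-1)\bigr)$.

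The calculation is entirely mechanical once the reindexing is set up; the cited result \cite[Theorem 4]{Suterintseq} packages exactly this type of sum manipulation, so invoking it reduces matters to verifying that Suter's normalization matches ours. The only mild obstacle is bookkeeping the powers of $2$ correctly when regrouping $2^{j-k} q^k t^j$ as $(q/2)^k (2t)^j$, but this is routine once the substitution $j = r - i$ (resp.\ $s = r - 1$) is in place.
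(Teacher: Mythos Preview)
Your proposal is correct and is essentially the same approach as the paper's, which gives no argument beyond the single sentence ``follows easily from Theorem \ref{thm:Betti} and \cite[Theorem 4]{Suterintseq}.'' You have simply unpacked what that sentence means: substitute the closed forms for $f(B_r,k)$ and $f(D_r,k)$, split the binomial via $\binom{r}{i}\frac{t^r}{r!}=\frac{t^i}{i!}\cdot\frac{t^{r-i}}{(r-i)!}$, and recognize the Stirling EGF $\exp(y(e^x-1))$ at $(x,y)=(2t,q/2)$---exactly the manipulation that Suter's Theorem~4 encodes.
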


\begin{rmk}
    $\ $
    \begin{enumerate}
        \itemsep 0em
        \item If $\Phi$ is of type $A_r$, the Euler characteristic of $\bar \fh$, namely the number $\sum \limits_{k = 0}^r f(A_r, k),$ is referred to as the $(r + 1)$st \textit{Bell number} in the literature.
        \item If $\Phi$ is of type $B_r$ or $C_r$, the Euler characteristic of $\bar \fh$, namely the number $\sum \limits_{k = 0}^r f(B_r, k),$ is referred to as the $r$th \textit{Dowling number} in the literature.
        \item The sequences $\{f(\Phi, k): k \in [0, \rk \Phi]\}$ also appeared in the work of Suter \cite{Suterintseq} in the context of hyperplane arrangements.
    \end{enumerate}
\end{rmk}

\section{Topology of \texorpdfstring{$\bar{\fh}$}{hbar}}

In this section, we determine the Weyl group representation on $H^{\bullet}(\bar \fh, \C)$ and determine the cup product structure of $H^{\bullet}(\bar \fh, \Z).$
 
\subsection{Weyl Group Representation}

Recall the definition of $L(\mathcal A)$ in Definition \ref{Def:interLat}. 
Let $\mathbb C \cdot L(\mathcal A)$ be the $\mathbb C$-vector space with $L(\mathcal A)$ as a basis.  It is graded by declaring that $X \in L(\mathcal A)$ is in degree $2\rk X.$  It follows from Theorem \ref{coxequivariant} that

\begin{corollary} \label{representation}
    The maps $F$ and $G$ induce mutually inverse isomorphisms
    \begin{align*}
        F: \mathbb C \cdot L(\mathcal A) \longrightarrow H^{\bullet}(\bar \fh, \C) \\
        G: H^{\bullet}(\bar \fh, \C) \longrightarrow \mathbb C \cdot L(\mathcal A)
    \end{align*}
    of graded $W$-representations.

    In particular, $H^{\bullet}(\bar \fh, \C)$ is a permutation representation of $W.$
\end{corollary}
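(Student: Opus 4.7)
The plan is to combine the affine paving result (Corollary \ref{strat}) and its cohomological consequence (Corollary \ref{homologyhbar}) with the $W$-equivariant poset isomorphism already established in Theorem \ref{coxequivariant}. By Corollary \ref{homologyhbar}, the classes $\xi_\Psi$ for $\Psi$ ranging over all $k$-step good root subsystems of $\Phi$, $k=0,\dots,r$, form a $\mathbb{C}$-basis of $H^\bullet(\bar\fh,\mathbb{C})$. On the other side, $L(\mathcal{A})$ is by definition a basis of $\mathbb{C}\cdot L(\mathcal{A})$. Therefore the $\mathbb{C}$-linear extensions of the set-theoretic maps $F:L(\mathcal{A})\to\mathcal{C}$ and $G:\mathcal{C}\to L(\mathcal{A})$ from Theorem \ref{coxequivariant} (sending $X\mapsto \xi_{X^\perp\cap\Phi}$ and $\xi_\Psi\mapsto \Psi^\perp$) are mutually inverse $\mathbb{C}$-linear isomorphisms. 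Their compatibility with the gradings is immediate from the fact that $G$ is rank-preserving, together with the normalization relating the rank of $\Psi$ as a good root subsystem to the (co)dimension of $\mathring C(\Psi)$ and hence to the degree in which $\xi_\Psi$ lives.

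The substantive content is $W$-equivariance, and the key step is to show that the action of $W$ on $H^\bullet(\bar\fh,\mathbb{C})$ permutes the basis $\{\xi_\Psi\}$ according to the rule $w\cdot\xi_\Psi = \xi_{w\Psi}$. I would proceed as follows: since $W$ acts on $\bar\fh$ by algebraic automorphisms, it in particular acts by orientation-preserving homeomorphisms (complex-linear maps are orientation-preserving on a complex variety). From the $W$-equivariance part of Theorem \ref{coxequivariant}, each closed stratum $C(\Psi)$ is sent by $w$ to $C(w\Psi)$ as an oriented subvariety; consequently, $w_*\Fund_\Psi = \Fund_{w\Psi}$ in $H_{2(r-k)}(\bar\fh,\mathbb{Z})$, with no sign. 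Using the defining pairing $\langle\xi_\Psi,\Fund_\Theta\rangle = \delta_{\Psi,\Theta}$ and the naturality identity $\langle w^*\xi,\alpha\rangle = \langle\xi,w_*\alpha\rangle$, one deduces $w^*\xi_{w\Psi} = \xi_\Psi$, i.e., $w\cdot\xi_\Psi = \xi_{w\Psi}$ with our convention. Since the $W$-action on $L(\mathcal{A})$ is also given by $w\cdot X = w(X)$ on basis elements, the maps $F$ and $G$ intertwine the two $W$-actions.

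Finally, the permutation representation assertion is then automatic: $\mathbb{C}\cdot L(\mathcal{A})$ is by construction the permutation representation of $W$ on the finite set $L(\mathcal{A})$, and the established isomorphism $G$ transfers this structure to $H^\bullet(\bar\fh,\mathbb{C})$.

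The main obstacle, and essentially the only point that requires care, is the orientation/sign issue in Step 2: one must ensure that no signs appear when $w$ permutes the fundamental classes $\Fund_\Psi$. This is settled by the observation that $W$ acts on $\bar\fh$ by complex algebraic automorphisms, but it is worth stating explicitly since otherwise one would only obtain a signed permutation representation rather than a genuine permutation representation.
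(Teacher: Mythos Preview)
Your proposal is correct and follows the same route as the paper, which derives the corollary directly from Theorem \ref{coxequivariant}; the paper's own proof is a single sentence (``We only need to observe that the $W$-action on $L(\mathcal A)$ is a permutation representation''), treating the isomorphism and its $W$-equivariance as already implicit in Theorem \ref{coxequivariant} together with Corollary \ref{homologyhbar}. Your write-up simply unpacks the step the paper elides---namely, why $w$ permutes the classes $\xi_\Psi$ without signs---and your discussion of the orientation issue via complex-algebraic automorphisms is the right justification for that point.
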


\begin{proof}
    We only need to observe that the $W$-action on $L(\mathcal A)$ is a permutation representation.
\end{proof}

For $X \in L(\mathcal A)$, define $$W^X := \{w \in W: w ~ \text{fixes every point of} ~ X\}.$$  By definition, $W^X$ is a parabolic subgroup of $W.$  By Steinberg \cite[Theorem 1.5]{Ste}, $W^X$ is the reflection group generated by the reflections associated to $\lambda \in X^{\perp}.$  The following theorem is due to Orlik and Solomon \cite{OS1}.

\begin{theorem} \cite[Lemmas 3.4 and 3.5]{OS1} \label{conj}
    For $X, Y \in L(\mathcal A)$, the following conditions are equivalent.
    \begin{enumerate}
        \itemsep 0em
        \item $X$ and $Y$ are in the same $W$-orbit in $L(\mathcal A)$;
        \item $W^X$ and $W^Y$ are conjugate in $W$;
        \item The Coxeter elements of $W^X$ and $W^Y$ are conjugate in $W.$
    \end{enumerate}
\end{theorem}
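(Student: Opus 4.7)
The plan is to prove the cycle $(1) \Rightarrow (2) \Rightarrow (3) \Rightarrow (1)$. The first two implications are essentially formal; the heart of the matter is $(3) \Rightarrow (1)$, which I would base on the classical fact that a Coxeter element of an essential finite Coxeter group has no fixed vector in its reflection representation.

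For $(1) \Rightarrow (2)$, if $Y = wX$ then a direct check from the definition of the pointwise stabilizer gives $W^Y = wW^Xw^{-1}$. For $(2) \Rightarrow (3)$, the theorem of Steinberg cited before the statement shows $W^X$ is itself a finite Coxeter group, and it is classical (Bourbaki, Ch.\ V) that all Coxeter elements of a finite Coxeter group are conjugate in that group. If $wW^Xw^{-1} = W^Y$ and $c_X$ is a Coxeter element of $W^X$, then $wc_Xw^{-1}$ lies in $W^Y$ and is a Coxeter element of $W^Y$ with respect to the $w$-conjugated simple system, so any Coxeter element $c_Y$ of $W^Y$ is $W^Y$-conjugate, hence $W$-conjugate, to $wc_Xw^{-1}$.

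For the substantive direction $(3) \Rightarrow (1)$, the key claim to establish is that for each $Z \in L(\mathcal A)$ and each Coxeter element $c$ of $W^Z$, the fixed subspace $\fh^c$ equals $Z$. Granting this, the equality $wc_Xw^{-1} = c_Y$ yields
\[
wX = w\fh^{c_X} = \fh^{wc_Xw^{-1}} = \fh^{c_Y} = Y,
\]
closing the cycle. To prove the claim, I would use Steinberg's theorem to write $W^Z = \langle s_\lambda : \lambda \in Z^{\perp} \cap \Phi \rangle$ and note that $Z^{\perp} = \Span(Z^{\perp} \cap \Phi)$, so the decomposition $\fh = Z \oplus Z^{\perp}$ is $W^Z$-stable, $W^Z$ acts trivially on $Z$, and $W^Z$ acts on $Z^{\perp}$ as an essential reflection representation whose Coxeter element is the restriction $c|_{Z^{\perp}}$. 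The classical eigenvalue formula $e^{2\pi i m_j/h}$ for a Coxeter element, together with the fact that no exponent $m_j$ is divisible by the Coxeter number $h$, shows that $c|_{Z^{\perp}}$ has no eigenvalue $1$; hence $\fh^c = Z$.

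The main obstacle is precisely this last eigenvalue input: the statement that a Coxeter element acts without fixed vectors on its essential reflection representation is nontrivial and relies on Coxeter's computation of the spectrum. Once this is in hand, everything else is a matter of formal manipulation of pointwise stabilizers and conjugation, and the three characterizations of the $W$-orbit of $X$ in $L(\mathcal A)$ drop out together.
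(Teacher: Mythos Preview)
The paper does not prove this theorem; it is quoted from Orlik and Solomon \cite{OS1} without argument, so there is no in-paper proof to compare against.

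Your proposal is correct and follows the standard route. The implications $(1)\Rightarrow(2)\Rightarrow(3)$ are indeed formal given Steinberg's theorem and the conjugacy of Coxeter elements in a finite Coxeter group (Bourbaki, Ch.~V, \S6, Prop.~1, extended componentwise to the reducible case). For $(3)\Rightarrow(1)$, your key claim $\fh^{c}=Z$ is exactly what is needed: the decomposition $\fh=Z\oplus Z^{\perp}$ is $W^{Z}$-stable with $W^{Z}$ acting trivially on $Z$ and essentially on $Z^{\perp}$, and since the exponents of an essential finite reflection group lie strictly between $0$ and the Coxeter number $h$, no eigenvalue of a Coxeter element on $Z^{\perp}$ equals $1$. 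The deduction $wX=w\,\fh^{c_X}=\fh^{\,wc_Xw^{-1}}=\fh^{c_Y}=Y$ then closes the cycle. The only nontrivial external input, as you note, is the spectral fact about Coxeter elements, and your citation of it is appropriate.
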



Moreover, let $N(W^X)$ be the normalizer of $W^X$ in $W.$  Orlik and Solomon \cite{OS1} also proved

\begin{prop} \cite[Lemma 3.4]{OS1} \label{stab}
    The stabilizer of the subspace $X$ in $W$ is $N(W^X).$
\end{prop}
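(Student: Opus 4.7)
The plan is to prove the proposition by establishing the two inclusions between the setwise stabilizer $\mathrm{Stab}_W(X) := \{w \in W : w \cdot X = X\}$ and $N(W^X)$, with the key intermediate step being the identification of $X$ as the fixed-point set of $W^X$ acting on $\fh.$

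First I would establish the crucial geometric identity $X = \mathrm{Fix}(W^X).$ The inclusion $X \subseteq \mathrm{Fix}(W^X)$ is tautological from the definition of $W^X.$ For the reverse inclusion, recall that Steinberg's theorem (already cited before the proposition) asserts that $W^X$ is generated by the reflections $s_\lambda$ with $\lambda \in X^\perp \cap \Phi$; consequently $\mathrm{Fix}(W^X) = \bigcap_{\lambda \in X^\perp \cap \Phi} \lambda^\perp.$ Since $X \in L(\mathcal A)$, we can write $X = \bigcap_{\lambda \in S} \lambda^\perp$ for some $S \subseteq \Phi^+$, and the set of $\lambda \in \Phi$ with $X \subseteq \lambda^\perp$ is exactly $X^\perp \cap \Phi.$ Hence $\bigcap_{\lambda \in X^\perp \cap \Phi} \lambda^\perp$ collapses back to $X.$

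Next I would deduce both inclusions formally from this identity. For $\mathrm{Stab}_W(X) \subseteq N(W^X)$, take $w \in \mathrm{Stab}_W(X)$ and $u \in W^X.$ For any $x \in X$, the point $w^{-1}x$ lies in $X$, so $(wuw^{-1})(x) = w(u(w^{-1}x)) = w(w^{-1}x) = x$; therefore $wuw^{-1}$ fixes $X$ pointwise, i.e., $wuw^{-1} \in W^X.$ Conversely, for $N(W^X) \subseteq \mathrm{Stab}_W(X)$, take $w \in N(W^X)$ and $x \in X.$ For any $u \in W^X$, we have $w^{-1}uw \in W^X$, so $(w^{-1}uw)(x) = x$, which rearranges to $u(wx) = wx.$ Thus $wx$ is fixed by every element of $W^X$, so $wx \in \mathrm{Fix}(W^X) = X$ by the identity above. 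Since $w$ acts linearly and $X$ is finite dimensional, $w \cdot X = X.$

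The only nontrivial ingredient is Steinberg's theorem on pointwise stabilizers of subspaces, which identifies $W^X$ as a reflection group; once that is in hand the argument is purely formal. I would therefore expect no real obstacle beyond correctly quoting that result and verifying that $X$ is recovered from the root hyperplanes containing it, which is automatic since $X$ belongs to the intersection lattice $L(\mathcal A).$
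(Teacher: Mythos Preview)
Your argument is correct. The identity $X = \mathrm{Fix}(W^X)$ is exactly what is needed, and your derivation of it from Steinberg's theorem is accurate: since $X \in L(\mathcal A)$ one has $X = \bigcap_{\lambda \in S} \lambda^{\perp}$ for some $S \subseteq \Phi^+$, whence $S \subseteq X^{\perp} \cap \Phi$ and therefore $\bigcap_{\lambda \in X^{\perp} \cap \Phi} \lambda^{\perp} \subseteq \bigcap_{\lambda \in S} \lambda^{\perp} = X$, giving the nontrivial inclusion. The two inclusions between $\mathrm{Stab}_W(X)$ and $N(W^X)$ then follow formally, as you wrote.

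As for the comparison: the paper does not supply its own proof of this proposition at all. It is stated with attribution to Orlik and Solomon \cite{OS1} and used as a black box in the derivation of Corollary~\ref{rep}. So there is nothing in the paper to compare your argument against; you have simply filled in a proof that the authors chose to outsource. Your proof is the standard one and relies only on Steinberg's theorem, which the paper already invokes in the sentence preceding the proposition, so it fits seamlessly into the surrounding exposition.
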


Detailed analysis of the structure of the group $N(W^X)$ can be found in the work of Howlett \cite[Corollary 3]{How}.  In particular, Howlett proved that $N(W^X)$ is the semidirect product of $W^X$ with the subgroup of $W$ consisting of those elements which map a given set of Coxeter generators of $W^X$ to itself.

Following Orlik and Solomon \cite{OS1}, we refer to the $W$-conjugacy class of a Coxeter element of a parabolic subgroup of $W$ as a \textit{parabolic class}.  By Theorem \ref{conj}, the map
\begin{align*}
    \{W \text{-orbits in} ~ L(\mathcal A)\} & \longrightarrow \{\text{parabolic classes in} ~ W\} \\
    \quad W \cdot X & \longmapsto ~ \text{conjugacy class of a Coxeter element of} ~ W^X
\end{align*}
is a well-defined bijection.

The set $\mathcal P$ of parabolic classes in $W$ has been classified by many different authors \cite{C, DPR, OS1}.  For a parabolic class $c$, choose an element $X$ of $L(\mathcal A)$ such that $c$ is a Coxeter element of $W^X.$  This is always possible by the bijection in the previous paragraph.  For the sake of simplicity, we write $N(c)$ for the normalizer $N(W^X).$  We define the rank $\rk N(c)$ of $N(c)$ to be the rank of the parabolic subgroup $W^X.$  Although $N(c)$ is only well-defined up to conjugation in $W$, its rank is well-defined.


As a corollary of Theorem \ref{coxequivariant}, Theorem \ref{conj} and Proposition \ref{stab}, we have

\begin{corollary} \label{rep}
    There is an isomorphism of $W$-representations $$H^{\bullet}(\bar \fh, \C) \simeq \bigoplus \limits_{c \in \mathcal P} {\rm Ind}_{N(c)}^W \mathds 1,$$ where $\mathds 1$ stands for the trivial representation of $N(c).$

    Moreover, if one places ${\rm Ind}_{N(c)}^W \mathds 1$ in degree $2\rk N(c)$, then the above is an isomorphism of graded $W$-representations.
\end{corollary}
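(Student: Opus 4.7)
The plan is to deduce this corollary directly from the permutation representation identified in Corollary \ref{representation}, together with the orbit-stabilizer description of permutation representations and the results of Orlik--Solomon stated in the excerpt.

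First I would decompose the permutation $W$-representation $\C \cdot L(\mathcal A)$ according to $W$-orbits on $L(\mathcal A)$, writing
\begin{equation*}
\C \cdot L(\mathcal A) \;=\; \bigoplus_{\mathcal O \in L(\mathcal A)/W} \C \cdot \mathcal O,
\end{equation*}
and observe that for each orbit $\mathcal O = W \cdot X$, the summand $\C \cdot \mathcal O$ is, by the standard description of transitive permutation modules, $W$-equivariantly isomorphic to $\operatorname{Ind}_{\operatorname{Stab}_W(X)}^W \mathds 1$. By Proposition \ref{stab}, the stabilizer is $\operatorname{Stab}_W(X) = N(W^X)$.

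Next I would parametrize $W$-orbits by parabolic classes using Theorem \ref{conj}: the assignment sending $W \cdot X$ to the conjugacy class in $W$ of a Coxeter element of $W^X$ is a well-defined bijection between $L(\mathcal A)/W$ and $\mathcal P$. Under this bijection, choosing for each $c \in \mathcal P$ a representative $X_c$ with $c$ equal to the class of a Coxeter element of $W^{X_c}$, the subgroup $N(c)$ is, by definition in the excerpt, exactly $N(W^{X_c}) = \operatorname{Stab}_W(X_c)$, so
\begin{equation*}
\C \cdot L(\mathcal A) \;\cong\; \bigoplus_{c \in \mathcal P} \operatorname{Ind}_{N(c)}^W \mathds 1
\end{equation*}
as $W$-representations. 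Combined with the $W$-equivariant isomorphism $H^{\bullet}(\bar \fh, \C) \cong \C \cdot L(\mathcal A)$ from Corollary \ref{representation}, this gives the ungraded statement.

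For the graded refinement, note that in $\C \cdot L(\mathcal A)$ the element $X$ sits in degree $2\rk X$, that this degree is constant on each $W$-orbit since $W$ acts by isometries preserving $\codim_{\fh}$, and that for any $X \in W \cdot X_c$ one has $\rk(X) = \rk(W^{X}) = \rk(W^{X_c}) = \rk N(c)$, the last equality being the definition of $\rk N(c)$ given just before the corollary. Hence the summand $\operatorname{Ind}_{N(c)}^W \mathds 1$ is placed in degree $2 \rk N(c)$, matching the assertion. There is no serious obstacle here: all the substantive work (the permutation description of cohomology, identification of stabilizers, and classification of orbits) is contained in Corollary \ref{representation}, Proposition \ref{stab}, and Theorem \ref{conj}; the proof is essentially a bookkeeping assembly of these three inputs.
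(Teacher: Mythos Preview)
Your proof is correct and follows exactly the approach indicated by the paper, which simply states that the corollary follows from Theorem \ref{coxequivariant} (equivalently, Corollary \ref{representation}), Theorem \ref{conj}, and Proposition \ref{stab} without spelling out the details. You have accurately filled in the orbit decomposition, stabilizer identification, and grading check that constitute the routine assembly of these three inputs.
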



Let $\bar H$ be the toric variety associated with the fan whose maximal cones are the Weyl chambers.  It is known that there is a $W$-action on $\bar H$ extending the $W$-action on $H$, so there is an induced $W$-action on $H^{\bullet}(\bar H, \C).$  It is interesting to compare Corollary \ref{rep} with the following result of Stembridge \cite{St}.  In particular, it is interesting to relate the set $\mathcal P$ of parabolic classes to the $W$-orbits in the set $\mathcal Q$ that appears below, noting that $\mathcal P$ has cardinality at most $2^{\rk \Phi}$.

\begin{theorem} \cite[Theorem 4.1 and Corollary 4.6]{St}
    $H^{\bullet}(\bar H, \C)$ is a permutation representation of $W.$  For any set $\mathcal Q$ with a $W$-action which is a basis for $H^{\bullet}(\bar H, \C)$, there are $2^{\rk \Phi}$ $W$-orbits in $\mathcal Q.$
\end{theorem}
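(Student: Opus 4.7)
The plan is to split the statement into (a) $H^{\bullet}(\bar H, \C)$ is a permutation representation of $W$, and (b) any $W$-stable basis has exactly $2^{\rk \Phi}$ orbits. For any permutation representation $V$ of a finite group, the sums of basis elements over each orbit form a basis of the invariants $V^W$: they are $W$-fixed by construction, linearly independent because different orbits have disjoint support, and span because the invariance condition forces coefficients within any orbit to agree. Thus the number of orbits in any $W$-stable basis equals $\dim_{\mathbb C} V^W$, which is intrinsic to the representation. So once (a) is known, part (b) reduces to computing $\dim H^{\bullet}(\bar H, \C)^W = 2^{\rk \Phi}$.

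To compute this dimension, I would use that $\bar H$ is the smooth projective toric variety for the Weyl chamber fan $\Sigma$, and that for a finite group action in characteristic zero one has $H^{\bullet}(\bar H, \C)^W \cong H^{\bullet}(\bar H/W, \C)$. The cones of $\Sigma$ are the $W$-translates of the faces $C_I$ of the fundamental chamber $C$, where $I \subseteq \Pi$ ranges over subsets of the simple roots; for each fixed $I$, the set of $W$-translates of $C_I$ forms a single $W$-orbit parameterized by $W/W_I$. So $W$-orbits on cones are in bijection with subsets of $\Pi$, giving $2^{|\Pi|} = 2^{\rk \Phi}$ orbits. These orbits index the cells of an affine paving of $\bar H/W$ obtained as the image of the $H$-orbit stratification of $\bar H$ under the quotient map, and the Betti sum of $\bar H/W$ is thus $2^{\rk \Phi}$.

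For (a), a $W$-stable basis must be constructed. The natural Bialynicki--Birula basis coming from a regular one-parameter subgroup is indexed by the $|W|$ torus-fixed points (equivalently, maximal cones of $\Sigma$), but it is \emph{not} $W$-stable, since no regular one-parameter subgroup is fixed by $W$. To produce a $W$-stable basis, I would work with the Stanley--Reisner presentation of $H^{\bullet}(\bar H, \C)$ from Danilov's theorem, in which the $W$-action is manifestly by permutation of the divisor generators, and extract a monomial basis that is $W$-stable by a careful choice of representatives cut out by an appropriate set of linear relations. The main obstacle is precisely this existence-of-$W$-stable-basis step: the natural BB basis is not $W$-stable, naive $W$-averaging of the BB basis produces linearly dependent classes, and one must use genuine combinatorial input about the face poset of $\Sigma$ (in the spirit of Stembridge's descent-class argument) to produce one degree at a time.
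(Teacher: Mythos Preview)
This theorem is not proved in the paper: it is quoted from Stembridge's article purely as a point of comparison with the preceding corollary on $H^{\bullet}(\bar\fh,\C)$, and no argument is supplied. So there is no in-paper proof to compare against; what follows are comments on your sketch taken on its own terms.

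Your reduction of (b) to the identity $\dim_{\C} H^{\bullet}(\bar H,\C)^W=2^{\rk\Phi}$, conditional on (a), is correct: in any permutation module the orbit sums form a basis of the invariants, so every $W$-stable basis has exactly $\dim V^W$ orbits, independent of the basis.

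The gap is in your computation of that invariant dimension. You assert that the images of the $H$-orbit strata give an \emph{affine paving} of $\bar H/W$ with $2^{\rk\Phi}$ cells. But the $H$-orbits $O_\sigma\cong(\C^\times)^{r-\dim\sigma}$ are tori, not affine cells, already on $\bar H$; and in the quotient, the image of the $W$-saturation of $O_{C_I}$ is $O_{C_I}/W_I$, a torus of rank $|I|$ modulo the parabolic subgroup $W_I$ acting through its reflection representation on the relevant character lattice. This need not be an affine space: for the adjoint group of type $A_2$ with $I=\Pi$ one finds $H/W\cong\operatorname{Spec}\C[a,b,c]/(ab-c^3)\cong\C^2/(\Z/3)$, not $\C^2$. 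What \emph{is} true is that each stratum $O_{C_I}/W_I$ has Euler characteristic $1$, and since $\bar H$ has no odd cohomology this yields $\dim H^{\bullet}(\bar H,\C)^W=\chi(\bar H/W)=2^{\rk\Phi}$; but that Euler-characteristic identity is itself a nontrivial fact (equivalent, on each stratum, to $\tfrac{1}{|W_I|}\sum_{w\in W_I}|\det(1-w)|=1$) which your sketch does not establish.

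For (a) you effectively concede that producing a $W$-stable basis requires Stembridge's descent-class construction, so your proposal does not contain an independent proof of the substantive half of the theorem.
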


\subsection{Cup Product}

For $X \in L(\mathcal A)$, recall that $F(X)= \mathring C(\Psi)$ for some $(r-k)$-step good root subsystem $\Psi=\Psi_X$ of $\Phi$ and let 
$k = \dim(\mathring C(\Psi)).$
 Let  $\Fund_X =[C(\Psi_X)] \in H_{2k}(\bar \fh, \Z)$ denote the cellular homology class represented by the stratum $F(X) \in \mathcal C,$ and let $\{ \xi_X : X \in L(\mathcal A)\}$ denote the dual basis of cohomology.
  Recall that
\[
H_{\bullet}(\bar \fh, \Z) = \bigoplus \limits_{X \in L(\mathcal A)} \mathbb Z \cdot \Fund_X \quad \text{and} \quad H^{\bullet}(\bar \fh, \Z) = \bigoplus \limits_{X \in L(\mathcal A)} \mathbb Z \cdot \xi_X.
\]
The cup product of the $\xi_X$'s is computed in the following

\begin{theorem}[see also {\cite[Theorem 14]{HW}}] \label{cupprod}
    For $X, Y \in L(\mathcal A)$, we have
    \begin{align*}
        \xi_X \smile \xi_Y =
        \begin{cases}
            \xi_{X \vee Y}  & ~ \text{if} ~ \rk(X \vee Y) = \rk X + \rk Y \\
            0 & ~ \text{else}.
        \end{cases}
    \end{align*}
In particular, $H^{\bullet}(\bar \fh, \Z)$ is generated by its degree $2$ component.
\end{theorem}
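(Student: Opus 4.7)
The plan is to work inside the embedding $\iota: \bar \fh \hookrightarrow (\mathbb P^1)^d$ from Proposition \ref{ambient}. For each $\lambda \in \Phi^+$, let $h_\lambda \in H^2((\mathbb P^1)^d, \Z)$ denote the class of the divisor $\{x_\lambda = \infty\}$, so that $H^{\bullet}((\mathbb P^1)^d, \Z) = \Z[h_\lambda]/(h_\lambda^2)$, and set $\tilde h_\lambda := \iota^* h_\lambda \in H^2(\bar \fh, \Z)$. The whole argument rests on a single key identity: for any $S \subseteq \Phi^+$, the product $\prod_{\lambda \in S} \tilde h_\lambda$ equals $\xi_{\bigcap_{\lambda \in S} \lambda^{\perp}}$ when $S$ is linearly independent in $\fh^*$, and vanishes otherwise. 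The case $|S| = 1$ already produces $\tilde h_\lambda = \xi_{\lambda^\perp}$, and once this identity is established the concluding generation statement is immediate, since every $\xi_X$ is then a monomial in the $\tilde h_\lambda$'s.

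To prove the key identity I will pair both sides with $\Fund_W$ for each $W \in L(\mathcal A)$ of rank $|S|$ and compute $c_W := \langle \prod_{\lambda \in S} \tilde h_\lambda, \Fund_W \rangle$ directly. By the projection formula this equals $\langle \prod_S h_\lambda, \iota_* \Fund_W \rangle$ in $(\mathbb P^1)^d$, and by Theorem \ref{m1'} applied iteratively, $C(\Psi_W)$ is the wonderful compactification of a Cartan subalgebra $\fh'$ of a semisimple subalgebra with root system $\Psi_W$; its image under $\iota$ therefore lies inside $(\mathbb P^1)^{\Psi_W^+} \times \{(\infty, \ldots, \infty)\} \subseteq (\mathbb P^1)^d$. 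Using the K\"unneth decomposition together with $h_\mu^2 = 0$, $c_W$ vanishes whenever some $\lambda \in S$ lies outside $\Psi_W^+$, and for $S \subseteq \Psi_W^+$ it reduces to a top-degree integral over $C(\Psi_W)$.

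The main obstacle is evaluating this reduced integral, which I will handle via the projection $\pi_S : C(\Psi_W) \to (\mathbb P^1)^{|S|}$ forgetting all coordinates outside $S$. Its restriction to the open orbit $\fh' \subseteq C(\Psi_W)$ is the linear map $h \mapsto (\lambda(h))_{\lambda \in S}$; when $S \subseteq \Psi_W^+$ is a basis of $(\fh')^* \cong W^\perp$ this restriction is a linear isomorphism, so $\pi_S$ is birational of degree one onto $(\mathbb P^1)^{|S|}$ and the projection formula yields an integral equal to $1$. When $S$ is linearly dependent the restriction has nontrivial kernel, so $\pi_S$ has positive-dimensional generic fibers and $\pi_{S,*}[C(\Psi_W)]$ vanishes in the top degree, making the integral $0$. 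Since the constraint $|S| = \rk W$ forces any independent $S \subseteq \Psi_W^+$ to satisfy $\Span S = W^\perp$ and hence $\bigcap_S \lambda^{\perp} = W$, this yields $c_W = \delta_{W, \bigcap_S \lambda^{\perp}}$ in the independent case and $c_W = 0$ in the dependent case, proving the key identity.

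Finally, for $X, Y \in L(\mathcal A)$ I will pick $S_X, S_Y \subseteq \Phi^+$ to be bases of $X^{\perp} \cap \Phi^+$ and $Y^{\perp} \cap \Phi^+$ respectively, so that $\xi_X = \prod_{S_X} \tilde h_\lambda$ and $\xi_Y = \prod_{S_Y} \tilde h_\lambda$. The product $\xi_X \smile \xi_Y = \prod_{S_X} \tilde h_\lambda \cdot \prod_{S_Y} \tilde h_\lambda$ vanishes automatically if $S_X \cap S_Y \ne \emptyset$ because $\tilde h_\lambda^2 = 0$; assuming disjointness, $|S_X \cup S_Y| = \rk X + \rk Y$ and $\Span(S_X \cup S_Y) \subseteq X^\perp + Y^\perp$, and elementary linear algebra gives that $S_X \cup S_Y$ is linearly independent iff $X^\perp \cap Y^\perp = 0$ iff $X + Y = \fh$ iff $\rk(X \vee Y) = \rk X + \rk Y$, exactly the transversality condition. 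In the transversal case the key identity produces $\xi_X \smile \xi_Y = \xi_{(X^\perp + Y^\perp)^{\perp}} = \xi_{X \cap Y} = \xi_{X \vee Y}$, and otherwise $\xi_X \smile \xi_Y = 0$, matching the statement of the theorem.
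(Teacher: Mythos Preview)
Your proposal is correct and follows essentially the same route as the paper: both reduce the theorem to a key identity asserting that $\iota^*(\prod_{\lambda \in S} u_\lambda)$ equals $\xi_{\bigcap_S \lambda^\perp}$ when $S$ is independent and $0$ otherwise (the paper's Lemma~\ref{lift}), and then deduce the cup product formula by choosing bases $S_X, S_Y$ of $X^\perp, Y^\perp$. The only real difference is in how the key identity is established: the paper computes the intersection numbers $\iota_*[C(\Psi)]\cdot [Q_\theta]$ by locating the set-theoretic intersections and checking transversality (or moving $Q_\theta$), whereas you factor through the coordinate projection $\pi_S: C(\Psi_W)\to (\mathbb P^1)^{|S|}$ and read off its degree from the linear map on the open orbit --- a slightly cleaner but equivalent computation.
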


To prove this result, we recall some standard facts about intersection theory on the smooth variety $(\mathbb P^1)^d,$ where $d=|\Phi^+|.$  If $\eta \subseteq \Phi^+$, we consider the subvariety
$$
Q_{\eta} =\{(x_{\lambda} ) \in (\mathbb P^1)^d : x_{\lambda}=0 \ \forall \lambda \not\in \eta \}.
$$
We note that $Q_{\eta} \cong (\mathbb P^1)^{|\eta|}$ and let 
$[Q_{\eta}] \in H_{2|\eta|} ((\mathbb P^1)^d, \Z)$ be the homology class defined by the submanifold $Q_{\eta}$,  and let $\{ u_{\eta} \}$ denote the corresponding dual basis of
 $H^{\bullet}((\mathbb P^1)^d, \Z).$   Then $H^2((\mathbb P^1)^d, \Z)$ is generated by the elements $u_{\lambda} \in H^2((\mathbb P^1)^d, \Z),$ and 
\[
H^{\bullet} ((\mathbb P^1)^{d}, \Z) \cong \prod \limits_{\lambda \in \Phi^+} \mathbb Z[u_{\lambda}]/(u_{\lambda}^2) \cong \bigoplus \limits_{\substack{\{ \lambda_1, \ldots, \lambda_k\} \subseteq \Phi^+ }} \mathbb Z \cdot u_{\lambda_1} \cdots u_{\lambda_k}.
\]
There is a Poincar\'e duality isomorphism $\mathcal D:H_{2\bullet}((\mathbb P^1)^d, \Z) \to H^{2(d-\bullet)}((\mathbb P^1)^d, \Z)$, and 
$\mathcal D([Q_{\eta}])=u_{\Phi^+ - \eta}.$   We consider also the intersection pairing 
\[
H_{2k}((\mathbb P^1)^d, \Z) \otimes H_{2\ell}((\mathbb P^1)^d, \Z) \longrightarrow H_{2(k + \ell - d)}((\mathbb P^1)^d, \Z), \ [X]\cdot [Y]:=(\mathcal D)^{-1}(\mathcal D(X)\smile \mathcal D(Y)), 
\]
for $[X] \in H_{2k}((\mathbb P^1)^d, \Z)$ and $[Y] \in H_{2\ell}((\mathbb P^1)^d, \Z).$
Further, for $\eta_1, \eta_2 \subseteq \Phi^+,$
   \begin{align*}
       u_{\eta_1} \smile u_{\eta_2}= 
       \begin{cases} 
         u_{\eta_1 \cup \eta_2} & ~ \text{if} ~ \eta_1 \cap \eta_2 = \emptyset \\
         0                 & ~ \text{else},
     \end{cases}
     \end{align*}
and if $|\eta_2|=d-|\eta_1|,$
    \begin{equation}\label{pairing}
    \begin{split}
       [Q_{\eta_1}]\cdot [Q_{\eta_2}]=
       \begin{cases}
       [pt] & ~ \text{if} ~ \eta_2 = \Phi^+ - \eta_1   \\
       0                            & ~ \text{else}.
    \end{cases}
    \end{split}
    \end{equation}

As a consequence, if $\eta = \{ \lambda_1, \dots, \lambda_k \}$, then
\begin{equation*}
u_{\eta}= \prod \limits_{i=1, \dots, k} u_{\lambda_i}.
\end{equation*}

Note also that there is a cycle map $A^{\bullet}((\mathbb P^1)^d) \to H^{2\bullet}((\mathbb P^1)^d, \Z)$ which is a ring isomorphism (\cite[Example 19.1.11(b) and Corollary 19.2(b)]{Ful}).  In particular, we can use assertions about the product in the Chow ring to compute the intersection pairing in homology.
  
Consider the natural inclusion $\iota: \bar \fh \hookrightarrow (\mathbb P^1)^d.$   For a $(r-k)$-step good root subsystem $\Psi$ of $\Phi^+$, let $LI(\Psi)$ be the set of $k$-element subsets $\eta = \{ \lambda_1, \dots, \lambda_k \}$ such that $\lambda_1, \dots, \lambda_k$ is a basis of a Cartan subalgebra $\fh_{\Psi}$ contained in $\fh.$  Recall that $\fh_{\Psi}$ is a Cartan subalgebra of a semisimple Lie algebra $\fg_{\Psi}$ such that $\Psi$ is the roots of $\fg_{\Psi}$ with respect to $\fh_{\Psi}.$
We prove

\begin{lemma} \label{lift}
Consider $X \in L(\mathcal A)$ and $\mathring C(\Psi)=F(X).$  Then the class 
\[
\iota_{\ast}([C(\Psi)])=\sum_{\eta \in LI(\Psi)} [Q_{\eta}] \in H_{2k}((\mathbb P^1)^d, \Z).
\]
Further, if $\eta = \{ \lambda_1, \dots, \lambda_k \} \in LI(\Psi)$, we have 
\[
\iota^{\ast}(\prod \limits_{i=1, \dots, k} u_{\lambda_i})=\iota^{\ast}(u_{\eta})=\xi_X.
\]
and if $\eta = \{ \lambda_1, \dots, \lambda_k \} \subseteq \Psi$ is linearly dependent, then
\[
\iota^{\ast}(\prod \limits_{i=1, \dots, k} u_{\lambda_i}) = \iota^{\ast}(u_{\eta}) = 0.
\]
\end{lemma}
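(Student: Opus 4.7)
The plan is to establish the push-forward identity first, and then to deduce both pull-back identities formally, using the intersection pairing \eqref{pairing} and the projection formula. I would expand
\[
\iota_*([C(\Psi)]) \;=\; \sum_{\substack{\eta \subseteq \Phi^+ \\ |\eta|=k}} a_\eta \, [Q_\eta]
\]
in the standard basis of $H_{2k}((\mathbb P^1)^d,\Z)$; by \eqref{pairing} the coefficient is $a_\eta = \iota_*([C(\Psi)]) \cdot [Q_{\Phi^+-\eta}]$. Since $[Q_{\Phi^+-\eta}]$ is homologous to the cycle $\{x_\lambda = c_\lambda : \lambda \in \eta\}$ for any tuple $c \in \mathbb C^\eta$, I would compute $a_\eta$ as a transverse intersection count for generic $c$.

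On $C(\Psi) \cong \bar \fh_\Psi$ the coordinate $x_\lambda$ is identically $\infty$ for every $\lambda \notin \Psi^+$, so $C(\Psi) \cap \{x_\lambda = c_\lambda : \lambda \in \eta\}$ is empty whenever $\eta \not\subseteq \Psi^+$, forcing $a_\eta = 0$. Assuming $\eta \subseteq \Psi^+$, restriction to the open subvariety $\fh_\Psi \subseteq \bar \fh_\Psi$ reduces the intersection to the affine-linear system $\{h \in \fh_\Psi : \lambda(h) = c_\lambda \ \forall \lambda \in \eta\}$, which admits a unique transverse solution if $\eta$ is linearly independent in $\fh_\Psi^*$ (that is, if $\eta \in LI(\Psi)$), and is empty for generic $c$ otherwise. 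By Theorem~\ref{m1'} applied to $\bar \fh_\Psi$, the boundary $\bar \fh_\Psi - \fh_\Psi$ is a finite union of subvarieties of dimension at most $k-1$, so for generic $c$ it is avoided by the $k$ affine hyperplanes $\{x_\lambda = c_\lambda\}$. Combining the cases gives $a_\eta = 1$ when $\eta \in LI(\Psi)$ and $a_\eta = 0$ otherwise, which is the first assertion.

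For the pull-backs, I would pair $\iota^*(u_\eta)$ against the basis $\{[C(\Theta)]\}$ of $H_{2k}(\bar \fh,\Z)$, with $\Theta$ ranging over the $(r-k)$-step good root subsystems. The projection formula together with the push-forward identity yields
\[
\iota^*(u_\eta) \cdot [C(\Theta)] \;=\; u_\eta \cdot \iota_*([C(\Theta)]) \;=\; \sum_{\eta' \in LI(\Theta)} u_\eta \cdot [Q_{\eta'}],
\]
and by \eqref{pairing} with $|\eta|=|\eta'|=k$ each term equals $u_\eta \cdot [Q_{\eta'}] = \delta_{\eta,\eta'}$, so the whole pairing is $1$ if $\eta \in LI(\Theta)$ and $0$ otherwise. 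If $\eta \in LI(\Psi)$, then $\eta \in LI(\Theta)$ forces $\Theta = \Psi$: by Lemma~\ref{k-step} both are maximal closed rank-$k$ root subsystems, and $\eta$ spans their common $\mathbb Q$-span, so $\Theta = \mathbb Q\text{-span}(\eta) \cap \Phi = \Psi$. Thus $\iota^*(u_\eta) = \xi_X$. If $\eta$ is linearly dependent, then $\eta \notin LI(\Theta)$ for any $\Theta$, giving $\iota^*(u_\eta) = 0$. The only delicate point is the generic-transversality argument in the push-forward step, whose main obstacle is ruling out contributions from the boundary of $\bar \fh_\Psi$; this is handled by the dimension count using Theorem~\ref{m1'} and Corollary~\ref{strat}.
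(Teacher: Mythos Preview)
Your proof is correct and follows the same approach as the paper: expand the push-forward in the basis $\{[Q_\eta]\}$, compute each coefficient as an intersection number with a translate of $Q_{\Phi^+-\eta}$, then deduce the pull-back assertions by pairing against the homology basis $\{[C(\Theta)]\}$. The only cosmetic differences are that the paper uses the specific translate $c=0$ (rather than a generic $c$) in the linearly independent case, and that you spell out explicitly the step $\eta\in LI(\Psi)\cap LI(\Theta)\Rightarrow\Psi=\Theta$, which the paper absorbs into its one-line appeal to functoriality.
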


\begin{proof}
For the first assertion, by Equation (\ref{pairing}), it suffices to prove that the intersection pairing $\iota_{\ast}([C(\Psi)])\cdot [Q_{\theta}] = 1$ when $\Phi^+ - \theta \in LI(\Psi)$ and is $0$ for each other subset $\theta$ of $\Phi^+$ of cardinality $d-k.$  Suppose first that $\theta = \Phi^+ - \eta$ where $\eta \in LI(\Psi)$ and we compute the pairing $\iota_{\ast}(C(\Psi)) \cdot Q_{\theta}.$   By assumption, any point in the intersection has $x_{\lambda_i}=0$ for each $\lambda_i \in \eta.$  Then for each root $\lambda$ of $\Psi$, there exist rational numbers $b_1, \dots, b_k$ such that  $\lambda = \sum_{i=1}^k b_i \lambda_i.$  Thus, $x_{\lambda} - \sum b_i x_{\lambda_i}$ is a linear polynomial that vanishes on $\iota(\fh_{\Psi})$, and it follows that $x_{\lambda}=0.$   Hence, the only intersection point of $\iota(C(\Psi))$ and $Q_{\theta}$ is $\iota(0)$ in the smooth open subvariety $\iota(\fh_{\Psi})$ of $\iota(C(\Psi))$, and further $\iota(0)$ is affine locally the intersection of linear subspaces. It follows that the coefficient of $[Q_{\eta}]$ in $\iota_*([C(\Psi)])$ equals $1$ by Theorem 1.26 of \cite{EH}.

If $\theta$ is a subset of cardinality $|\Phi^+|-k$ which is not of the form
$\Phi^+ - \eta$ for any $\eta \in LI(\Psi)$, then either $\Phi^+ - \theta$ contains a positive root $\beta$ not in $\Psi$, or $\Phi^+ - \theta = \{ \lambda_1, \dots, \lambda_k \}$ is contained in $\Psi$ but is linearly dependent.  In the
first case,  the value of the $\beta$-coefficient is $0$ for any point of $Q_{\theta}$, but the value of the $\beta$-coefficient is $\infty$ for  any point in  $\iota(C(\Psi)).$  Hence, the intersection is empty so the intersection pairing is $0$.   In the second case, consider a nontrivial linear relation $c_1 \lambda_1 + \dots + c_k \lambda_k = 0,$ and choose $x_{0,\lambda_1}, \dots, x_{0,\lambda_k} \in \C$ 
such that $\sum c_i x_{0, \lambda_i}\not=0.$  Then we can move $Q_{\theta}$ without changing its homology class so that the $\lambda_i$ coordinate of any point of $Q_{\theta}$ is $x_{0,\lambda_i}$, and such a point is not in $C(\Psi).$  Hence, in the second case, the intersection pairing is also $0$.   The first assertion follows.

The last two assertions follows from the first assertion using functoriality of the pairing between cohomology and homology and the fact that $H_{\bullet}(\bar \fh, \Z)$ is spanned freely by the classes $[C(\Psi)].$
\end{proof}

\begin{proof}[Proof of Theorem \ref{cupprod}]
    Choose a basis $\lambda_1, \ldots, \lambda_p \in \Phi^+$ (resp. $\mu_1, \ldots, \mu_q \in \Phi^+$) of $X^{\perp}$ (resp. $Y^{\perp}$).  By Lemma \ref{lift},
$\iota^{\ast}(u_{\lambda_1} \cdots u_{\lambda_p})=\xi_X$ and $\iota^{\ast}(u_{\mu_1} \cdots u_{\mu_q})=\xi_Y.$

    Suppose $\rk (X \vee Y) = \rk X + \rk Y$, namely, $\codim (X \cap Y) = \codim X + \codim Y.$  Then $$\dim (X^{\perp} + Y^{\perp}) = \dim (X \cap Y)^{\perp} = \dim X^{\perp} + \dim Y^{\perp},$$  so that $\lambda_1, \ldots, \lambda_p, \mu_1, \ldots, \mu_q$ is a basis of $(X \vee Y)^{\perp}.$  It follows that 
$$\xi_{X \vee Y} = \iota^{\ast}(u_{\lambda_1} \cdots u_{\lambda_p} u_{\mu_1} \cdots u_{\mu_q}) = \iota^{\ast}(u_{\lambda_1} \cdots u_{\lambda_p}) \smile \iota^{\ast} (u_{\mu_1} \cdots u_{\mu_q}) = \xi_X \smile \xi_Y,$$ which proves the assertion in this case.

    Conversely, suppose $\rk (X \vee Y) \neq \rk X + \rk Y$, so that $\dim (X^{\perp} + Y^{\perp}) < \dim X^{\perp} + \dim Y^{\perp}.$  Then the elements $\lambda_1, \ldots, \lambda_p, \mu_1, \ldots, \mu_q$ are linearly dependent.  Hence by Lemma \ref{lift}, we deduce that $\iota^{\ast}(u_{\lambda_1} \ldots u_{\lambda_p} u_{\mu_1} \cdots u_{\mu_q})= 0.$  It follows that $\xi_X \smile \xi_Y = 0.$
\end{proof}

\noindent Sam Evens: Department of Mathematics, University of Notre Dame, 255 Hurley, Notre Dame, IN 46556.  Email: sevens@nd.edu

\medskip 

\noindent Yu Li: Department of Mathematics, University of Notre Dame, 255 Hurley, Notre Dame, IN 46556.  Email: yli234@nd.edu

\end{document}